\DeclareFontFamily{U}{wncy}{}
\DeclareFontShape{U}{wncy}{m}{n}{<->wncyr10}{}
\DeclareSymbolFont{mcy}{U}{wncy}{m}{n}
\DeclareMathSymbol{\Sha}{\mathord}{mcy}{"58}
\setlist[itemize]{label=--}
\newtheorem{theorem}{Theorem}[section]
\newtheorem{lemma}[theorem]{Lemma}
\newtheorem{prop}[theorem]{Proposition}
\newtheorem{cor}[theorem]{Corollary}
\newtheorem{conj}[theorem]{Conjecture}
\theoremstyle{definition}
\newtheorem{definition}[theorem]{Definition}
\newtheorem{example}[theorem]{Example}
\newtheorem*{def*}{Definition}
\newtheorem*{ex*}{Example}
\theoremstyle{remark}
\newtheorem{remark}[theorem]{Remark}
\newtheorem*{remark*}{Remark}
\newcommand{\A}{\mathbb{A}}
\newcommand{\bra}{\langle}
\newcommand{\C}{\mathbb{C}}
\newcommand{\CH}{\mathrm{CH}}
\newcommand{\cont}{\mathrm{cont}}
\newcommand{\F}{\mathbb{F}}
\newcommand{\Fr}{\mathrm{Frob}}
\newcommand{\G}{\mathbf{G}}
\newcommand{\GL}{\mathrm{GL}}
\newcommand{\h}{\mathrm{H}}
\newcommand{\iso}{\xrightarrow{\,\sim\,}}
\newcommand{\ket}{\rangle}
\newcommand{\m}{\mathfrak{m}}
\newcommand{\mb}[1]{\mathbf{#1}}
\newcommand{\Or}{\mathcal{O}}
\newcommand{\Q}{\mathbb{Q}}
\newcommand{\R}{\mathbb{R}}
\newcommand{\sh}[1]{\mathcal{#1}}
\newcommand{\U}{\mathrm{U}}
\newcommand{\Z}{\mathbb{Z}}
\renewcommand{\Re}{\operatorname{Re}}
\DeclareMathOperator{\diag}{diag}
\DeclareMathOperator{\Gal}{Gal}
\DeclareMathOperator{\Hom}{Hom}
\DeclareMathOperator{\Ind}{Ind}
\DeclareMathOperator{\Res}{Res}
\DeclareMathOperator{\spec}{Spec}
\DeclareMathOperator{\Tr}{Tr}
\theoremstyle{definition}
\newtheorem{assumption}[theorem]{Assumption}
\newtheorem{observation}[theorem]{Observation}
\renewcommand{\G}{\mathbb{G}}
\newcommand{\bH}{\mathbf{H}}
\newcommand{\bG}{\mathbf{G}}
\newcommand{\Sh}{\mathrm{Sh}}
\newcommand{\BC}{\mathrm{BC}}
\newcommand{\CO}{{\mathcal {O}}}
 \newcommand{\CS}{{\mathcal {S}}}
\newcommand{\BZ}{{\mathbb {Z}}}
\newcommand{\wt}{\widetilde}
\newcommand{\indf}{\mathbf{1}}
\newcommand{\tX}{{\widetilde{X}}}
\newcommand{\open}[1]{\mathring{#1}}
\newcommand{\shear}{{\mathbin{\mkern-6mu\fatslash}}}
\begin{document}

\title{Euler systems and relative Satake isomorphism}
\author{Li Cai} 
\address[L.C.]{Academy for Multidisciplinary Studies, Beijing National Center for Applied Mathematics, Capital Normal University, Beijing, 100048, People's Republic of China
} 
\email{caili@cnu.edu.cn}

\author{Yangyu Fan} 
\address[Y.F.]{Key Laboratory of Algebraic Lie Theory and Analysis of Ministry of Education, School of Mathematics and Statistics, Beijing Institute of Technology, Beijing, 100081, People's Republic of China
} 
\email{yangyu.fan@bit.edu.cn}

\author{Shilin Lai} 
\address[S.L.]{Department of Mathematics, University of Michigan, 530 Church Street, Ann Arbor, MI 48109, USA} 
\email{shilinl@umich.edu}

\begin{abstract}
  We explain how the unramified Plancherel formula in the relative Langlands program gives a natural way of constructing test vectors which satisfy the tame norm relations of an Euler system. This uniformly recovers many of the known Euler systems, and in the twisted Friedberg--Jacquet setting, we produce a new split anticyclotomic Euler system.
\end{abstract}

\maketitle
\setcounter{tocdepth}{1}
\tableofcontents

\section{Introduction}
The Bloch--Kato conjecture suggests a deep connection between the arithmetic of motives and special values of $L$-functions. The method of Euler systems is one way of understanding this connection. An Euler system consists of a family of motivic classes satisfying two types of relations: ``wild'' ones for relations in a $p$-adic tower, and ``tame'' ones for relations at places away from $p$. Experiences suggest that the wild norm relations require quite special conditions on $p$, such as some form of ordinarity. On the other hand, tame norm relations seem to exist in much greater generalities, and they are already enough to deduce some cases of the Bloch--Kato conjecture, cf.~\cite{JNS, LaiSkinner}. However, in the many known examples of Euler systems, the construction of classes satisfying the tame norm relations usually requires some \emph{ad hoc} choices followed by extensive case-by-case calculations.

On the automorphic side of this picture, one way to get a handle on special values of $L$-functions is through period integrals. Correspondingly, all known constructions of Euler systems are based (perhaps implicitly) on motivic interpretations of period integrals. Recently, Ben-Zvi--Sakellaridis--Venkatesh proposed a relative Langlands program, which is a far-reaching framework for organizing these period integrals centered around spherical varieties and their generalizations \cite{BZSV}. It is natural to ask if there is an arithmetic analogue of this framework, which should organize the many constructions of motivic classes (or their realizations) in the literature.

In this paper, we examine Euler systems from the relative Langlands point of view. We explain that this gives an automatic way of producing tame norm relations. Our method is computation-free, and we uniformly recover many of the known examples, some of which are summarized in Table~\ref{tab:Examples}. More details can be found in \S\ref{ss:Diagonal}, \ref{ss:Eisenstein}.

\begin{table}[ht]
    \centering
    \begin{tabular}{l|l|l}
    Group & Spherical variety & Attribution\\ \hline
    $\U(n)\times\U(n+1)$ & $\U(n)\backslash\U(n)\times\U(n+1)$ & Lai--Skinner \cite{LaiSkinner}\\ \hline
    $\U(2n)$ & $\U(n)\times\U(n)\backslash\U(2n)$ &  Graham--Shah \cite{GrahamShah}\\ \hline
    \multicolumn{2}{c|}{Inner form of $\uparrow$} & Twisted Friedberg--Jacquet\\ \hline
    \hline
    $\GL_2\times\GL_2$ & $\bG\times^{\GL_2}\mathtt{std}$ &  Lei--Loeffler--Zerbes \cite{LLZ}\\ \hline
    $\Res_{F/\Q}\GL_2$ & $\uparrow$ &  Grossi \cite{GrossiAF}\\ \hline
    $\mathrm{GU}(2,1)$ & $\uparrow$ & Loeffler--Skinner--Zerbes \cite{LSZ-U3}\\ \hline
    $\mathrm{GSp}_4\times_{\G_m}\GL_2$ & $\bG\times^{\GL_2\times_{\G_m}\GL_2}\mathtt{std}$ & Hsu--Jin--Sakamoto \cite{HJS}
    \end{tabular}
    \caption{Examples of spherical varieties giving rise to tame parts of Euler systems}
    \label{tab:Examples}
\end{table}

\subsection{Simple case of main result}
We will state the application of our results for the pushforward of cycles, covering the first three rows of Table~\ref{tab:Examples}. We construct split anticyclotomic Euler systems in the sense of Jetchev--Nekov\'a\v{r}--Skinner \cite{JNS}, which we will simply call JNS Euler systems.

Let $E/F$ be a CM extension. Suppose $\mb{H}\hookrightarrow\mb{G}$ are reductive groups with Shimura varieties $\Sh_{\mb{H}}\hookrightarrow\Sh_{\bG}$ defined over $E$ such that the basic numerology
\[
    \dim\Sh_\bG=2\dim\Sh_{\mb H}+1
\]
holds. After fixing level structures for $\mb{G}$ and $\mb{H}$, this defines a special cycle on $\Sh_\bG$ in the arithmetic middle dimension, which is expected to be related to the central derivative of an $L$-function.

We set up some notations for a JNS Euler system. Let $\mathscr{L}$ be the set of places of $F$ which split in $E$, with an explicit finite set of exceptions. For each $\ell\in\mathscr{L}$, fix a place $\lambda$ of $E$ above $\ell$, and let $\mathrm{Frob}_\lambda$ be the arithmetic Frobenius at $\lambda$. Let $\mathscr{R}$ be the set of square-free products of places in $\mathscr{L}$. For $\m\in\mathscr{R}$, let $E[\m]$ be the ring class field of conductor $\m$, so it is associated to the order $\Or_F+\m\Or_E$ by class field theory. Our first result is the construction of a motivic Euler system.

\begin{theorem}[Proposition~\ref{prop:cycle}+Corollary~\ref{cor:Abstract}]\label{thm:1}
    Let $d=\dim\Sh_{\mb G}$. Suppose $\mb{X}=\mb{H}\backslash\mb{G}$ is a spherical $\mb{G}$-variety. Let $\nu:\mb{H}\to\U(1)$ be a character satisfying the following conditions.
    \begin{enumerate}
        \item The character $\nu$ is combinatorially trivial (Definition~\ref{defn:CombTrivial}, also cf.~Example~\ref{ex:CombTriv}).
        \item The induced Shimura datum on $\U(1)$ is non-trivial.
    \end{enumerate}
    Let $\sh{H}_\ell$ be the Hecke polynomial attached to $\mb{X}$ in the sense of Definition~\ref{def:Hecke}. Then there exists a collection of classes
    \[
        \big\{z_\m\in\h^{d+1}_\cont(\Sh_{\bG/E[\m]},\Z_p(d))\,\big|\,\m\in\mathscr{R}\big\}
    \]
    such that whenever $\m,\m\ell\in\mathscr{R}$, we have the tame norm relation
    \[
        \Tr_{E[\m]}^{E[\m\ell]}z_{\m\ell}=\sh{H}_{\ell}(\mathrm{Frob}_\lambda^{-1})\cdot z_\m.
    \]
    Moreover $z_1$ is the image of the special cycle under the continuous \'etale cycle class map.
\end{theorem}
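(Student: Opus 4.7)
The plan is to combine the two preceding results cited in the statement: Proposition~\ref{prop:cycle} will produce the cycle-theoretic construction, and Corollary~\ref{cor:Abstract} will establish the abstract Hecke identity that propagates the tame norm relation.

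First, the construction of $z_\m$ proceeds as follows. For each $\m\in\mathscr{R}$, choose a compact open subgroup $K_{\bH}(\m)\subset\bH(\A_{F,f})$ refining the ambient level at each $\ell\mid\m$ according to the order $\Or_F+\m\Or_E$, so that the finite cover $\Sh_{\bH,K_{\bH}(\m)}\to\Sh_{\bH}$ carries a natural action of $\U(1)(\A_{F,f})$ via the character $\nu$. Extracting the $\nu$-eigencomponent and using class field theory for $\U(1)$, the pushforward of the fundamental class along $\Sh_{\bH,K_{\bH}(\m)}\hookrightarrow\Sh_{\bG}$ descends from $E^{\ab}$ to the ring class field $E[\m]$. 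Composing with the continuous \'etale cycle class map and the Tate twist determined by the non-trivial induced Shimura datum on $\U(1)$ yields a class $z_\m\in\h^{d+1}_\cont(\Sh_{\bG/E[\m]},\Z_p(d))$; this is what Proposition~\ref{prop:cycle} delivers, and in particular $z_1$ is the image of the unrefined special cycle, as required.

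To prove the tame norm relation, the strategy is to compare the two sides after localizing at the split place $\ell=\lambda\bar\lambda$. Class field theory for $E/F$ identifies $\Gal(E[\m\ell]/E[\m])$ with an explicit quotient determined by the orders $\Or_F+\m\Or_E$ and $\Or_F+\m\ell\Or_E$ at $\ell$; under this identification, the Galois trace $\Tr_{E[\m]}^{E[\m\ell]}$ becomes a sum over coset representatives that, by the combinatorial triviality hypothesis of Example~\ref{ex:CombTriv}, match orbit-by-orbit the basis appearing in the generalized Cartan decomposition of $\bH(F_\ell)\backslash\bG(F_\ell)$. Feeding this matching into Corollary~\ref{cor:Abstract} identifies the local Hecke operator so produced with $\sh{H}_{V,\ell}(\Fr_\lambda^{-1})$ acting on $z_\m$.

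The main obstacle, and the most delicate bookkeeping, is aligning the Galois-theoretic trace with the geometric Hecke trace of cycles. This rests on three points: the unramifiedness at $\ell$, so that the level-raising map of integral models is finite \'etale and pushforward of integral cycle classes is well-behaved; the split assumption, which collapses the local relative Satake formula to a statement at the single place $\lambda$ of $E$; and the combinatorial triviality of $\nu$, which ensures that the Frobenius enters as $\Fr_\lambda^{-1}$ without any auxiliary twist by $\nu$ or by a Weyl denominator. Once these compatibilities are verified, the theorem follows formally from the two inputs.
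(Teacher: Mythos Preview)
Your proposal misidentifies what each of the two cited inputs actually does, and as a result the argument has a genuine gap.

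Proposition~\ref{prop:cycle} does \emph{not} construct the individual classes $z_\m$. It constructs a single $\widetilde{\bG}(\A^{p\infty})$-equivariant map
\[
  \Theta:C_c^\infty(\widetilde{\mb{X}}(\A^{p\infty}),\Z_p)\longrightarrow \varinjlim_{K}\h^{d+1}_\cont(\Sh_{\widetilde{\bG}}(K),\Z_p(d)),
\]
the ``motivic theta series'', where $\widetilde{\mb{X}}$ is the $\U(1)$-bundle over $\mb{X}=\bH\backslash\bG$ determined by $\nu$. The classes $z_\m$ are then defined as $\Theta(\delta[\m])$ for carefully chosen test functions $\delta[\m]\in C_c^\infty(\widetilde{\mb{X}}(\A^{p\infty}),\Z_p)^{K[\m]}$, and the norm relation is a \emph{formal} consequence of the equivariance of $\Theta$ together with the local identity $\Tr_{K^0}^{K^1}\Phi_1=\sh{H}_{V,\ell}\cdot\Phi_0$ in $C_c^\infty(\widetilde{X},\Z_p)$. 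The existence of such a $\Phi_1$ is the entire content behind Corollary~\ref{cor:Abstract}, and it rests on the relative Satake isomorphism and the automatic divisibility of Propositions~\ref{prop:Volume} and~\ref{prop:Divisible}. The identification of $K[\m]$-invariants with cohomology over $E[\m]$, and of the Hecke operator $\mathtt{T}$ with $\Fr_\lambda^{-1}$, is the content of \S\ref{ss:MotivicES}.

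Your route instead tries to build $z_\m$ directly by choosing level structures $K_{\bH}(\m)$ on the $\bH$-Shimura variety and then to verify the norm relation by an ``orbit-by-orbit'' matching between $\Gal(E[\m\ell]/E[\m])$-cosets and the generalized Cartan decomposition. This is precisely the kind of case-by-case computation the paper is designed to avoid, and the step you flag as ``most delicate bookkeeping'' is in fact the whole problem: combinatorial triviality (Definition~\ref{defn:CombTrivial}) only says the $\U(1)$-bundle trivializes over the open $\mb{B}$-orbit; it says nothing about any bijection between Galois cosets and Cartan cells, and no such matching is available in general. Without the relative Satake input you have no mechanism to show that $\sh{H}_{V,\ell}(\Fr_\lambda^{-1})\cdot z_\m$ lands in the image of the trace, which is exactly the integrality obstruction~\eqref{eqn:???} discussed in the introduction.
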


The classes $z_\m$ come from Hecke translates of special cycles, as usual. The novelty here is that the existence of such Hecke translates is abstractly deduced by exploiting local harmonic analysis on spherical varieties.

\begin{remark}
    We informally explain some of the definitions occurring in the above theorem.
    \begin{itemize}
        \item  In the relative Langlands program, the period integral attached to $\mb{X}$ is conjecturally related to an $L$-function for automorphic representations of $\mb{G}$. The Hecke operator $\sh{H}_\ell$ gives the local factors of this $L$-function under the classical Satake isomorphism. In all cases considered in this paper, this is the ``standard'' $L$-function in that setting.

        It is important to note that this is \emph{different} from the ``$L$-function of $X$'' defined in \cite[Definition 7.2.3]{SakSpherical}. In its notation, we only keep (a part of) the denominator and disregard the adjoint $L$-functions.
        \item  The conditions on $\nu$ allow us to interpret cohomology classes for $\Sh_\bG$ over $E[\m]$ as classes for $\Sh_{\bG\times\U(1)}$ over the base field $E$. The combinatorially trivial condition guarantees that the $L$-values of all such character twists are described using the same period integral.
    
        In the Friedberg--Jacquet case ($\U(n)\times\U(n)\hookrightarrow\U(2n)$), these conditions explain the appearance of the character $\frac{\det h_1}{\det h_2}$. However, in the triple product case, the Ichino formula only describes the central value, and such a character does not exist. This is a well-known obstacle for extending the Gross--Kudla--Schoen cycle to an Euler system.
    \end{itemize}
\end{remark}

We can in fact replace $\Z_p$ by more general coefficient systems. We also have a version of the above theorem for the pushforward of Eisenstein classes (Example~\ref{ex:SiegelUnits}), which recovers the final four entries in the table. However, they depend on numerical consequences of the results of Sakellaridis--Wang \cite{SakWang}, which are not yet available in the mixed characteristics setting. 

\subsection{Example: Twisted Friedberg--Jacquet setting}\label{ss:TwistedFJ}
We now describe the third row of Table~\ref{tab:Examples} in details and highlight some new features of our approach. In this setting, our Euler system is new.

We use the set-up in \cite{LXZ-TwistedFL}. Let $F_0$ be a totally real field. Let $E_0/F_0$ be a totally real quadratic extension, and let $F/F_0$ be a CM extension. Let $E=E_0\otimes_{F_0} F$, so it is a biquadratic extension of $F_0$. Let $F'/F_0$ be the remaining quadratic extension of $F_0$ contained in $E$. Let $\sigma$ denote the non-trivial element of $\Gal(F/F_0)$. In summary, we have the field diagram
\begin{center}
    \begin{tikzpicture}[scale=0.8]
    \node (F0) at (0,0) {$F_0$};
    \node (E0) at (1.5,1.5) {$E_0$};
    \node (F') at (0,1.5) {$F'$};
    \node (F) at (-1.5,1.5) {$F$};
    \node (E) at (0,3) {$E$};

    \draw (F0) -- (E0) node [pos=0.5, below right] {Real, $\tau$} -- (E);
    \draw (F0) -- (F) node [pos=0.5, below left] {CM, $\sigma$} -- (E);
    \draw (F0) -- (F') -- (E);
    \end{tikzpicture}
\end{center}

Let $(B,\ast)$ be a division algebra over $F$ of dimension $(2n)^2$ with an involution $\ast$ of second kind. This forces $B$ to be a matrix algebra at the archimedean places. Suppose that $(B,\ast)$ has signature $(1,2n-1)$ at one archimedean place and $(0,2n)$ at the other archimedean places. Suppose further that there is an embedding $E\hookrightarrow B$ whose image is fixed by $*$, then the centralizer $B^E$ is a division algebra over $E$ of dimension $n^2$.

Define the following algebraic groups over $F_0$
\[
    \bH=\Res_{E_0/F_0}\U(B^E)=\{g\in (B^E)^\times| g^*g=1\},\quad \bG=\U(B)=\{g\in B^\times|g^*g=1\}
\]
They both have Shimura varieties with reflex fields (contained in) $F$ and dimensions
\[
    \dim_F \Sh_{\mb{H}}=n-1,\quad \dim_F \Sh_{\mb{G}}=2n-1.
\]
Therefore, the basic dimension numerology is satisfied.

Let $\pi$ be a symplectic automorphic representation of $\GL_{2n}(\A_{F_0})$ whose base change to $F$ is cuspidal. Let $\rho_\pi$ be the $2n$ dimensional Galois representation attached to $\pi$. Our main result in this case constructs the tame part of a JNS Euler system for the CM extension $F'/F_0$.
\begin{theorem}\label{thm:TwistedFJ}
    There is a JNS Euler system for the decomposable representation
    \[
        (\rho_\pi\oplus\rho_\pi\eta_{F/F_0})|_{\Gal_{F'}}
    \]
    whose base class is the $p$-adic \'{e}tale realization of the special cycle corresponding to the embedding $\Sh_{\mb{H}}\hookrightarrow\Sh_{\mb{G}}$.
\end{theorem}

This theorem is a standard consequence of Theorem~\ref{thm:1}, and we now explain the conditions of that theorem in this setting. Since our Euler system is for $F'/F_0$, the augmentation torus is $\mb{T}=\U(1)_{F'/F_0}$. The reduced norm defines a character $\mathrm{Nrd}:\U(B^E)\to\U(1)_{E/E_0}$ over $E_0$. Using it, we obtain a character
\[
    \nu:\mb{H}\to(\Res_{E_0/F_0}\U(1)_{E/E_0})/\U(1)_{F/F_0}\simeq\mb{T},
\]
where the final map is the norm map from $E$ to $F'$. Using it, we define the augmented pair of groups
\[
    \mb{H}\hookrightarrow\widetilde{\mb{G}}=\bG\times\mb{T},\quad h\mapsto(h,\nu(h))
\]
It is easy to verify using Example~\ref{ex:CombTriv} that the character $\nu$ is combinatorially trivial. 

Let $\ell$ be a place of $F_0$. Away from finitely many bad places, there are four cases depending on its splitting behaviours in $F$ and $E_0$. The pairs $(\mb{G},\mb{H})$ have the following descriptions after localizing at $\ell$:
\begin{enumerate}[label=(\roman*)]
    \item Splits in $F$ and $E_0$: $(\GL_{2n,F_0}, \GL_{n,F_0}\times\GL_{n,F_0})$,
    \item Inert in $F$ and $E_0$: $(\U_{2n,F/F_0}, \Res_{F/F_0}\GL_{n,F})$.
    \item Splits $F$, inert in $E_0$: $(\GL_{2n,F_0}, \GL_{n,E_0})$,
    \item Inert in $F$, split in $E_0$: $(\U_{2n,F/F_0}, \GL_{n,F_0}\times\GL_{n,F_0})$.
\end{enumerate}

For a JNS Euler system, we need the place $\ell$ to be split in $F'$, which corresponds to cases (i) and (ii) above. For arithmetic applications without conditions on $p$, it is necessary to consider 100\% of such primes. Case (i) was handled in \cite{GrahamShah} using the zeta integral method. However, in case (ii), the local multiplicity is $2^n$. So the zeta integral method cannot be applied directly. Moreover, we are not aware of any theory of local zeta integrals for this case.

Instead, we reduce the problem to a question of unramified local harmonic analysis on the symmetric space $\mb{H}\backslash\mb{G}$, which has been studied extensively. In case (i), we use the general result of Sakellaridis \cite{SakSpherical}. In case (ii), we compute the spectral decomposition ourselves following the Casselman--Shalika--Hironaka method developed in \cite{Hir10,Off04} (Theorem~\ref{thm:Satake non-split FJ}). In both cases, the denominator of the Plancherel formula contains the centre of the standard $L$-function for $\mathrm{BC}_{F_0}^F(\pi)$. This is the $L$-function attached to $\mb{X}$ in Theorem~\ref{thm:1}.

\begin{remark}
    Even though we still need to do local calculations, our approach has the following advantages compared to the earlier zeta integral approach, even in cases where it is available:
    \begin{enumerate}
        \item The harmonic analysis of $C_c^\infty(X,\C)$, where $X$ is a symmetric variety, is of independent interest.
        \item In previous works, choosing the test vector and computing the zeta integral both require substantial trial and error, whereas the spectral decomposition of $C_c^\infty(X,\C)^K$ in many cases can be systematically computed using the Casselman--Shalika method.
    \end{enumerate}
\end{remark}

\subsection{Idea of proof}
Our main result is a combination of the following two steps which are completely different in nature:
\begin{enumerate}
    \item Construction of a ``motivic theta series'' (Definition~\ref{defn:MotivicTheta}). 
    \item Local harmonic analysis on spherical varieties (Proposition~\ref{prop:MainCorollary}).
\end{enumerate}
We now explain each item in turn.
\subsubsection{Motivic theta series}
By definition, this is a $\mb{G}(\A^{p\infty})$-equivariant map between an ad\`{e}lic function space and certain ``motivic classes''. In the settings considered in this paper, the continuous \'etale cohomology group plays the role of this space of motivic classes. For arithmetic applications, it is important to have integral coefficients on the target cohomology group.

For the pushforward construction, versions of this map with \emph{rational} coefficients have appeared, for example in \cite[Definition 9.2.3]{LSZ-U3} and \cite[Proposition 9.14]{GrahamShah}. Our first main idea is that by considering the correct function space, there is a natural integral refinement of this construction.
\begin{observation}[Propositions~\ref{prop:cycle}, \ref{prop:Eisenstein}]
    In many cases, including the ones cited above, a more natural statement is that there is an \emph{integral}, $\mb{G}(\A^{p\infty})$-equivariant map
\[
    C_c^\infty(\mb{X}(\A^{p\infty}),\Z_p)\to\{\text{Integral motivic classes}\},
\]
where $\mb{X}$ is a spherical variety.
\end{observation}
\noindent More precisely, in the cycles case, we take $\mb{X}=\mb{H}\backslash\mb{G}$, and there is the commutative diagram
\[
\begin{tikzcd}
    C_c^\infty(\bG,\Q_p)_{\mb{H}}\rar & \h_\cont^{2d}(\Sh_\bG,\Q_p(d))\\
    C_c^\infty(\mb{X},\Z_p)\uar[hook]\rar & \h_\cont^{2d}(\Sh_\bG,\Z_p(d))\uar[hook]
\end{tikzcd}
\]
The previous constructions worked with the top arrow directly, but the pre-image of the natural lattice in cohomology did not have an easy description. Our observation is that the bottom arrow exists (Proposition~\ref{prop:cycle}), making $C_c^\infty(\mb{X},\Z_p)$ the natural integral lattice in the coinvariant space. The cases involving Eisenstein classes are treated similarly (Proposition~\ref{prop:Eisenstein}). In these cases, we take $\mb{X}$ to be the vector bundle
\[
    \mb{X}=\mb{G}\times^{\GL_2}\mathtt{std}=\GL_2\backslash(\mb{G}\times\mathtt{std}),
\]
instead of the previously used coinvariant space $C_c^\infty(\mb{G}\times\mathtt{std},\Z_p)_{\GL_2}$. Here, $\mathtt{std}$ is the standard two-dimensional representation of $\GL_2$.\\

This observation was already used by the third named author in \cite{LaiSkinner} to bypass some Iwasawa-theoretic arguments needed previously to deal with torsions. This was also independently observed by A.~Groutides, who used it to study the optimality of Hecke operators appearing in tame norm relations \cite{GroutidesRS}.

\subsubsection{Local harmonic analysis}
Once such a motivic theta series exists, the problem of tame norm relations is entirely reduced to constructing test vectors satisfying norm relations in $C_c^\infty(X,\Z_p)$, where $X=\mb{X}(F_\ell)$.

The first question to understand is the origin of the field extension $E[\m]$. Inspired by \cite{LoefflerSpherical,GrahamShah}, we introduce an augmented group $\widetilde{\bG}=\mb{G}\times\mb{T}$, where $\mb{T}$ is a one-dimensional torus that is supposed to parameterize character twists. By considering the desiderata of such an augmentation, we are led to the definition of a combinatorially trivial $\mb{T}$-bundle (Definition~\ref{defn:CombTrivial}). This ensures that $\pi$ is $\mb{X}$-distinguished implies that $\pi\times\chi$ is $\widetilde{\mb{X}}$-distinguished for any $\chi\in\hat{\mb{T}}$, giving rise to a family of character twists.

Using this bundle, we can define two level structures: for $i=0,1$, let $K^i=\bG(\Or)\times\mb{T}^i$, where $\mb{T}^0=\mb{T}(\Or)$ and $\mb{T}^1$ is the subset which is congruent to 1 modulo the uniformizer. The existence of tame norm relations is reduced to the following question: given a Hecke operator $\sh{H}_\ell$ and a ``basic element'' $\Phi_0$,
\begin{equation}\label{eqn:???}
    \text{is it true that }\sh{H}_\ell\cdot\Phi_0\in \Tr_{K^0}^{K^1}C_c^\infty(X,\Z_p)^{K^1}?\tag{\textdagger}
\end{equation}
This is \emph{stronger} than just requiring the function $\sh{H}_\ell\cdot\Phi_0$ to take values in $\Z_p$. Indeed, in the extreme case where the $K^0$-orbit of a point $x\in X$ coincides with its $K^1$-orbit, we need the value at $x$ to be divisible by the index $[K^0:K^1]=\ell-1$. We give a necessary condition for when this kind of additional divisibility requirements can occur, in terms of the geometry of $\mb{X}$ (Proposition~\ref{prop:Volume}). This is done by extending the proof of the generalized Cartan decomposition due to Gaitsgory--Nadler \cite[Theorem 8.2.9]{GN-Dual} and Sakellaridis \cite[Theorem 2.3.8]{SakANT}, 

To verify these divisibility conditions, we need to compute $\sh{H}_\ell\cdot\Phi_0$. Since $\sh{H}_\ell$ is described using its Satake image, we use works on the \emph{relative} Satake isomorphism by Sakellaridis \cite{SakSpherical} and Sakellaridis--Wang \cite{SakWang}. Our second main idea is the following.
\begin{observation}[cf.~Proposition~\ref{prop:DivisibleT}]
    The structure of the inverse relative Satake transform implies additional divisibility properties.
\end{observation}
\noindent We use the Heegner points case as an instructive example. At a split place, the spherical variety is $\mb{X}=\G_m\backslash\mathrm{PGL}_2$. The generalized Cartan decomposition identifies $\mb{X}(F)/\mb{G}(\Or)$ with the non-positive integers. In Theorem~\ref{thm:InverseSatake}, the $L$-factor is
\[
    \hat{\sh{L}}_{(X)}=(1-\ell^{-\frac{1}{2}}x)(1-\ell^{-\frac{1}{2}}x^{-1}),
\]
corresponding to the centre of the standard degree 2 $L$-function. The right hand side of Theorem~\ref{thm:InverseSatake} is
\[
    (1-\ell^{-\frac{1}{2}}x)(1-\ell^{-\frac{1}{2}}x^{-1})\cdot\frac{1-x^{-2}}{(1-\ell^{-1/2}x^{-1})^2}=(1-x^{-2})\cdot\frac{1-\ell^{-1/2}x}{1-\ell^{-1/2}x^{-1}}.
\]
This can be expanded as a formal Laurent series in $x$. The conclusion of Theorem~\ref{thm:InverseSatake} is that the function $\phi\in C_c^\infty(\mb{X}(F)/\mb{G}(\Or),\Z_p)=C_c(\Z_{\leq 0},\Z_p)$ whose relative Satake transform is $\hat{\sh{L}}_{(X)}$ is exactly given by the non-positive coefficients of this formal Laurent series, up to an explicit half power of $\ell$. By keeping track of these half powers carefully, it is easy to show that the values of $\phi$ are all polynomials in $\ell$.

From the geometric volume consideration above, we need to show that $\phi(0)\in(\ell-1)\Z_p$. To do this, we view $\ell$ as a formal variable and specialize at $\ell=1$. The rational function becomes the polynomial
\[
    (1-x^{-2})\cdot\frac{1-x}{1-x^{-1}}=x^{-1}-x
\]
The coefficient of $x^0$ is clearly 0, which proves the desired divisibility by $\ell-1$. In general, instead of expanding out this polynomial directly, we observe that it is anti-symmetric under the reflection $x\leftrightarrow x^{-1}$. This is the strategy used to prove the general Proposition~\ref{prop:DivisibleT}.

Combining these two computations, we note that this automatic divisibility is stronger than the geometric requirements, so the answer to \eqref{eqn:???} is yes, and we obtain the tame norm relation!

\subsection{Arithmetic applications}
Starting from Theorem~\ref{thm:1}, there are well-known methods in the literature to extract arithmetic consequences from it. We state one such example application, but it is not the focus of our paper.

The next corollary constructs the tame part of a JNS Euler system in Galois cohomology, following a standard Abel--Jacobi map procedure described in \S\ref{ss:MotivicES}.

\begin{cor}\label{cor:ActualES}
    Suppose in addition that
    \begin{itemize}
        \item $\mb{G}$ is anisotropic modulo centre.
        \item Condition (C') of \cite{MorelSuh} holds for $\mb{G}$.
        \item Kottwitz's conjecture \cite[Conjecture 5.2]{BlasiusRogawski} holds for the middle degree cohomology of $\Sh_{\bG/\bar{E}}$.
    \end{itemize}
    Let $\pi$ be a stable cohomological automorphic representation of $\mb{G}(\A_F)$ distinguished by $\mb{X}$. Let $\rho_\pi$ be the $p$-adic Galois representation attached to $\pi$ and the Shimura cocharacter for $\Sh_\bG$.
    
    Under the above set-up, there is a lattice $T_\pi$ in $\rho_\pi$ and a collection of Galois cohomology classes
    \[
        \{c_\m\in\h^1(E[\m],T_\pi)\,|\,\m\in\mathscr{R}\}
    \]
    forming the tame part of a JNS Euler system. In other words, whenever $\m,\m\ell\in\mathscr{R}$, we have the tame norm relation
    \[
        \Tr_{E[\m]}^{E[\m\ell]}c_{\m\ell}=P_\lambda(\Fr_\lambda^{-1})c_\m,
    \]
    where $P_\lambda(X)=\det(1-X\Fr_\lambda|\rho_\pi)$ is the characteristic polynomial of $\Fr_\lambda$.
\end{cor}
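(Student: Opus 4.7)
The plan is to descend the continuous \'etale classes $z_\m$ of the preceding Theorem to $\pi$-isotypic Galois cohomology classes via the Hochschild--Serre Abel--Jacobi machinery, then read off the tame norm relation by Hecke-equivariance. This is essentially the procedure sketched in \S\ref{ss:MotivicES}; the three hypotheses of the corollary serve complementary purposes.

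First I would invoke the Hochschild--Serre spectral sequence for continuous \'etale cohomology,
\[
    E_2^{p,q}=\h^p\!\left(E[\m],\h^q_\et(\Sh_{\bG/\overline{E[\m]}},\Z_p(d))\right)\Longrightarrow\h^{p+q}_\cont(\Sh_{\bG/E[\m]},\Z_p(d)).
\]
Anisotropy modulo centre makes $\Sh_\bG$ proper, so each $\h^q_\et$ is a finitely generated $\Z_p$-module with continuous Galois and Hecke actions. Morel--Suh's condition (C') together with the stability of $\pi$ ensures that $\h^q_\et(\Sh_{\bG/\overline{E[\m]}},\Q_p)_\pi$ vanishes for $q\neq d$, and Kottwitz's conjecture identifies the middle-degree piece with $\rho_\pi^{\oplus m_\pi}$ for some multiplicity $m_\pi\geq 1$. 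The edge map
\[
    \h^{d+1}_\cont(\Sh_{\bG/E[\m]},\Z_p(d))_\pi\;\iso\;\h^1\!\left(E[\m],\h^d_\et(\Sh_{\bG/\overline{E[\m]}},\Z_p(d))_\pi\right)
\]
becomes an isomorphism after inverting $p$. Fixing a Galois-stable $\Z_p$-lattice $T_\pi\subset\rho_\pi$ large enough to contain the image of every $z_\m$ under projection to a chosen copy of $\rho_\pi$, I would define $c_\m\in\h^1(E[\m],T_\pi)$ to be that image.

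The tame norm relation then descends automatically: the trace along $E[\m\ell]/E[\m]$, localization at $\pi$, and projection to $T_\pi$ are all Hecke-equivariant, so the Theorem gives
\[
    \Tr^{E[\m\ell]}_{E[\m]}c_{\m\ell}=\sh{H}_{V,\ell}(\Fr_\lambda^{-1})\cdot c_\m.
\]
Unramified local-global compatibility at $\lambda$ (the content of Kottwitz's conjecture combined with the Satake isomorphism) identifies the scalar by which $\sh{H}_{V,\ell}$ acts on $\rho_\pi$ with $P_\lambda$, giving the Euler factor demanded by the JNS axioms.

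The main obstacle is the middle-degree concentration of cohomology: the combination of (C') and stability of $\pi$ yields a weight-filtration argument in the spirit of Morel--Suh which kills $\pi$-isotypic components outside degree $d$, but one must verify that this survives integrally after localization at the relevant non-Eisenstein maximal ideal of the Hecke algebra. A secondary technicality is matching the abstract Hecke polynomial $\sh{H}_{V,\ell}$ with the characteristic polynomial $P_\lambda$; this is standard but deserves care because of the inverse-Frobenius conventions and the choice of normalization of the Shimura cocharacter entering Kottwitz's formula.
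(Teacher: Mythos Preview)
Your outline is correct and follows the same Abel--Jacobi strategy as the paper: push the classes $z_\m$ through the Hochschild--Serre filtration, land in $\h^1$ of the middle-degree geometric cohomology, then project to $\rho_\pi$ via Kottwitz and read off the Euler factor from the Hecke polynomial.

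The one substantive difference lies in how the null-homologous step is carried out. You first localize at $\pi$ and invoke vanishing of the $\pi$-isotypic component outside the middle degree. The paper instead uses condition (C') more concretely: Morel--Suh construct an explicit Hecke operator $\mathtt{t}=\mathtt{t}^{(-1)^{d+1}}$ acting as a sign projector, which annihilates the entire group $\h^{d+1}_\et(\Sh_{\bG/\bar E},\Z_p(d))$ by parity. Applying $\mathtt{t}$ to $z_\m$ therefore already yields a cohomologically trivial class before any localization, and one only passes to $\Q_p$-coefficients at the very end when projecting to $\rho_\pi$. Your route is equally valid but uses (C') for a different consequence (concentration of stable $\pi$ in middle degree, which is closer to Arthur's conjecture itself) and forces the passage to rational coefficients one step earlier; the paper's route keeps the construction integral a bit longer and makes the choice of lattice $T_\pi$ slightly more canonical. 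Neither approach avoids the integrality caveat you flag at the end.
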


\begin{remark} We briefly explain the roles of the conditions, which are not too serious thanks to a large body of work in the area.
    \begin{itemize}
        \item The anisotropic modulo centre is a simplifying condition so that the Shimura variety is compact and we can directly apply the above cited works.
        \item Condition (C') is a collection of statements related to Arthur's conjecture for $\mb{G}$, If $\bG$ is a unitary or orthogonal group, then condition (C') is known (cf.~the discussion after Remark 1.6 in \cite{MorelSuh}). Their result is only used to modify our classes $z_\m$ to be null-homologous. In the unitary case, \cite[Proposition 6.9]{LL2021} also suffices for this purpose.
        \item Kottwitz's conjecture is used to show that $\rho_\pi$ actually contributes to the cohomology of $\Sh_{\bG/\bar{E}}$. In all of our cases, the Shimura variety is of abelian type, and what we need is follows from the work of Kisin--Shin--Zhu \cite{KSZ}.
    \end{itemize}
\end{remark}

By combining our construction with the results of \cite{JNS}, we obtain the following implication.
\begin{theorem}
    With notations as in the previous corollary, suppose the Galois representation $\rho_\pi:\Gal_E\to\GL(V_\pi)$ satisfies the following conditions.
    \begin{enumerate}
        \item $\rho_\pi$ is absolutely irreducible.
        \item There exists $\sigma\in\Gal_{E[1](\mu_{p^\infty})}$ such that $\dim V_\pi/(\sigma-1)V_\pi=1$.
        \item There exists $\gamma\in\Gal_{E[1](\mu_{p^\infty})}$ such that $V_\pi/(\gamma-1)V_\pi=0$.
    \end{enumerate}
    Then
    \[
        c_1\neq 0\implies\dim\h^1_f(E,\rho_\pi)=1
    \]
\end{theorem}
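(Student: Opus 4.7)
The plan is to invoke the Kolyvagin machinery of \cite{JNS} as a black box. Corollary~\ref{cor:ActualES} already produces the data of a split anticyclotomic JNS Euler system: classes $c_\m \in \h^1(E[\m], T_\pi)$ indexed by $\m \in \mathscr{R}$, linked by the tame norm relation with the expected Hecke polynomial $P_\lambda(\Fr_\lambda^{-1})$. Hypotheses~(1)--(3) are precisely the standard ``big image'' axioms of the JNS framework: absolute irreducibility supplies the Chebotarev input, the element $\sigma$ with one-dimensional coinvariants cuts out the supply of Kolyvagin primes, and the element $\gamma$ with vanishing coinvariants guarantees that $\h^0(E[1](\mu_{p^\infty}), V_\pi) = 0$, so that the mod-$p^M$ arguments can be lifted compatibly to $T_\pi$.

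Concretely, I would proceed as follows. For each $M \geq 1$, use hypothesis~(2) together with Chebotarev to exhibit an abundance of Kolyvagin primes $\ell \in \mathscr{L}$ at which $\Fr_\lambda - 1$ has one-dimensional cokernel on $T_\pi/p^M$ and at which $P_\lambda(\Fr_\lambda^{-1})$ splits off the expected linear factor. For each square-free product $\m$ of such primes, the abstract Kolyvagin derivative construction produces a class $\kappa_\m \in \h^1(E, T_\pi/p^M)$; the tame norm relation of Corollary~\ref{cor:ActualES} translates into the standard ``finite-singular'' local dichotomy for $\kappa_\m$, namely that $\loc_\lambda \kappa_\m$ lies in the finite part for $\ell \nmid \m$ and determines $\loc_\lambda c_1 \bmod p^M$ up to a unit factor for $\ell \vb \m$. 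Hypothesis~(3) then lets one pass to the limit over $M$.

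The technical crux, already executed in \cite{JNS}, is a Cassels--Poitou--Tate style duality argument: if $\dim \h^1_f(E, \rho_\pi) \geq 2$, one may produce $\m$ and a class in the dual Selmer group whose localization at a chosen $\ell \vb \m$ is nonzero while pairing nontrivially with $\kappa_\m$, and tracking this pairing back to $c_1$ via the norm relation forces $c_1 \equiv 0 \pmod{p^M}$ for all $M$. Since $c_1 \neq 0$ already gives the lower bound $\dim \h^1_f(E, \rho_\pi) \geq 1$, the two estimates combine to the claimed equality.

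The main obstacle is therefore not in the present theorem but in the preceding construction: once the existence of the classes $c_\m$ and the precise shape of the tame norm relation have been established in Corollary~\ref{cor:ActualES}, the Selmer bound is formal. The only verification required at this stage is that hypotheses~(1)--(3) match the big image conditions as formulated in \cite{JNS}, which is a direct comparison of axiom lists.
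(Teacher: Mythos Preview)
Your proposal is correct and matches the paper's approach exactly: the paper does not give an independent argument but simply states that the theorem follows by feeding the Euler system of Corollary~\ref{cor:ActualES} into the machinery of \cite{JNS}, with hypotheses (1)--(3) being the required big-image conditions. Your write-up is in fact more detailed than the paper's one-line justification, spelling out the Kolyvagin-derivative and duality steps that \cite{JNS} carries out.
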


\begin{remark}
    \begin{enumerate}
        \item Since we are not using wild norm relations, there is no specific hypothesis on the prime $p$ in any of the above results.
        \item In the twisted Friedberg--Jacquet case, the JNS Euler system argument does not apply directly to such decomposable representations, but we hope a suitable modification of it could yield applications towards rank 1 cases of the Bloch--Kato conjecture for $\rho_\pi$.
        \item In the Eisenstein class cases, our construction gives classes over cyclotomic extensions (instead of anticyclotomic ones). Our resulting Euler system is the type considered by Rubin, cf.~\cite[Definition 10.6.1]{LSZ}. Since we are not considering wild norm relations, the Iwasawa cohomology in \emph{loc.~cit.} should be replaced by the usual cohomology.
    \end{enumerate}
\end{remark}

\subsection{Related works}
\subsubsection{Zeta integral approach}
In all works cited in Table~\ref{tab:Examples} (except \cite{LLZ}), the tame norm relation is verified using the zeta integral method first developed by Loeffler--Skinner--Zerbes \cite{LSZ}. In this method, one writes down a carefully chosen candidate for the test vector in $C_c^\infty(X,\Z_p)^{K^1}$ and verifies that its trace is equal to $\sh{H}_\ell\cdot\Phi_0$ by an explicit, often intricate, zeta integral computation. It also relies crucially on local multiplicity one. Our method avoids both the computations and the local multiplicity one hypothesis.

\subsubsection{Other approaches}
The idea of using spherical varieties in the construction of Euler systems goes back to Cornut \cite{CornutES}. Unfortunately, there is a gap in his construction related to the definition of Hecke operators, as explained in \cite{ShahHeckeDescent}. From the point of view of our framework, his setting presents additional features worthy of further investigation (cf.~Remark~\ref{rmk:1}).

In certain spherical settings where classes are obtained by a pushforward construction, Loeffler gave a systematic construction of wild norm relations \cite{LoefflerSpherical}. He also considered more general cases where a mirabolic subgroup has an open orbit (Definition 4.1.1 of \emph{op.~cit.}). In many such cases, we reinterpret the mirabolic subgroup as the point stabilizer of a naturally occurring affine \emph{inhomogeneous} spherical variety. However, in some examples such as $(\mathrm{GSp}_4,\GL_2\times_{\G_m}\GL_2)$, only a parabolic subgroup of $\bG$ has an open orbit. Our method does not handle this case. This is an Eisenstein degeneration of the spherical pair $(\mathrm{SO}_4\times\mathrm{SO}_5,\mathrm{SO}_4)$, and it would be interesting to understand how this can be interpreted as an operation on spherical varieties.

During the preparation of this work, the preprint \cite{ShahZeta} was posted. Shah considered a similar local question as \eqref{eqn:???}, phrased using double cosets on the group $\mb{G}$. At this point, the \emph{classical} Satake isomorphism is used in \emph{op.~cit.}, leading to complicated expressions involving Kazhdan--Lusztig polynomials, which need to be computed on a case-by-case basis. However, his method can treat certain non-spherical cases, which at the present falls outside the conjectural framework of \cite{BZSV}.

\subsubsection{Removing conditions}\label{1.4.1}
To make our results unconditional, we would need to have the function-level results of \cite{SakWang} for mixed characteristic local fields. Such a statement should follow from motivic integration techniques, along the lines of \cite{CHL-FL}. The third named author plans to write a short note on this matter in the future, though it would certainly be more desirable to have a sheaf-level statement and proof.

It would also be useful to more systematically develop our results in the non-split setting. The necessary local harmonic analysis results should be within reach of current methods. For example, \cite{SakWang} only assumes that the group is quasi-split, and \cite{MurilloII} handles many symmetric cases.

\subsubsection{Vista}
Perhaps the deepest question raised by our work is to understand a space of the form
\[
    \Hom_{\bG(\A)}(\mathrm{Fun}(\mb{X}(\A),\Z),\{\text{Integral motivic classes}\})
\]
in larger generality. Indeed, replacing ``integral motivic classes'' with ``automorphic functions'', then this space (though not its two constituent pieces) has a conjectural dual description in the relative Langlands program. In addition to the pushforward of cycles or motivic classes described above, the arithmetic theta lifting (cf.~\cite{LiuATL-I, LL2021, DisegniES}) should also be part of this framework. It would also be interesting to understand its relation with the recent relative cohomology construction of Sangiovanni--Skinner, cf.~\cite{SkinnerMarco}.

\subsection*{Acknowledgments}
It is clear that most of this paper is built on the fundamental works of Yiannis Sakellaridis. We would like to thank him for insightful conversations and for pointing out a major gap in an earlier version of the paper.

We would also like to thank David Ben-Zvi, Ashay Burungale, David Loeffler, Christopher Skinner, Ye Tian and Wei Zhang for their interests and helpful discussions.

The first and second named authors are  supported by the National Key R$\&$D Program of China No. 2023YFA1009702 and the National Natural Science Foundation of China No. 12371012. The second named author is also supported by  National Natural Science Foundation of Beijing, China No. 24A10020.

\section{Spherical varieties}\label{sec:Spherical}
Let $F$ be a field of characteristic 0, not necessarily algebraically closed. Unless otherwise specified, everything in this section will be defined over $F$.

Let $\bG$ be a split reductive group with a Borel subgroup $\mb{B}$. Let $\mb{X}$ be a variety with a right $\bG$-action. Recall that $\mb{X}$ is \emph{spherical} if the action of $\mb{B}\subseteq\bG$ on $\mb{X}$ has an open orbit. We make the following assumptions
\begin{enumerate}
  \item $\mb{X}$ is smooth, affine, connected.
  \item $\mb{X}$ has no root of type N (cf.\ \S\ref{ss:Classification}).
  \item Every $\mb{B}$-orbit on $\mb{X}_{/\overline{F}}$ contains an $F$-point.
  \item $\mb{X}$ has an invariant $\mb{G}$-measure.
\end{enumerate}
The assumptions imply that there is a unique open $\mb{B}$-orbit even on the level of $F$-points. We denote this orbit by $\mathring{\mb{X}}$. Fix once and for all a point $x_0\in\open{\mb{X}}(F)$. Let $\bH$ be the stabilizer of $x_0$. Let $\mb{X}^\bullet$ be the open $\mb{G}$-orbit in $\mb{X}$, so $\mb{X}^\bullet\simeq\mb{H}\backslash\bG$, and it contains $\mathring{\mb{X}}$ as an open dense subset.

\begin{remark}
  The condition that $\bG$ is split is present in most of the literature on spherical varieties. Even though our groups are non-split globally, we will only apply the results of this section when $F$ is a local field where the group $\bG$ splits

  Assumptions (1) and (2) roughly correspond to the assumptions imposed in \cite{BZSV} in the case of polarized Hamiltonian varieties. There have been progresses towards the unramified Plancherel formula without some of these hypothesis, for example \cite{SakWang} for certain singular varieties, and \cite{MurilloII} for certain varieties with roots of type N. It would be interesting to see if the behaviour observed in this paper still holds in these settings.
  
  Assumption (3) is purely a matter of convenience for the present paper to rule out spherical roots of type T non-split, which requires a separate analysis. In a future work, we plan to remove it and moreover consider the case when $\bG$ is not split.

  Assumption (4) is also included only for simplicity of notation. It holds if $\mb{H}$ is reductive or if $\mb{X}$ is of the form $\mb{G}\times^{\mb{H}'}V$, where $\mb{H}'$ is reductive and $V$ is a linear representation of $\mb{H}'$. These are the two cases needed for the Euler system constructions in the paper. In general, one may assume there is an $\mb{G}$-eigenmeasure after trivially modifying \cite[\S 3.8]{SakComp}, and our formulae in \S\ref{ss:Satake} need to be modified by the corresponding character in a well-understood way.
\end{remark}

\subsection{Structure theory}\label{sec:Structure}
We now recall some general results from the theory of spherical varieties, stating only what is needed for the present work. We refer to \cite{SakSpherical,SVSpherical} and references found therein for a more systematic development with proofs.

\subsubsection{Notations}
Let $\mb{P}(\mb{X})$ be the subgroup of $\mb{G}$ fixing the open orbit. It contains the Borel subgroup, so it is a parabolic subgroup. Choose a good Levi subgroup $\mb{L}(\mb{X})$ as in \cite[\S 2.1]{SakSpherical}, and let $\mb{A}$ be a maximal torus of $\bG$ contained in $\mb{B}\cap\mb{L}(\mb{X})$. Define the torus
\[
  \mb{A}_X=\mb{A}/(\mb{A}\cap\bH)
\]
Write $\Lambda_X$ for its cocharacter lattice, and let $\mathfrak{a}_X=\Lambda_X\otimes_\Z\Q$. Similarly define $\mathfrak{a}$ as the coroot lattice of $\mb{G}$ tensored with $\Q$, then we have a quotient map $\mathfrak{a}\twoheadrightarrow\mathfrak{a}_X$. There is a natural cone $\sh{V}\subseteq\mathfrak{a}_X$ containing the image of the \emph{negative} Weyl chamber. Let $\Lambda_X^+=\sh{V}\cap\Lambda_X$. Its elements will be called $\mb{X}$-anti-dominant.

Let $\sh{V}^\perp$ be the negative-dual cone to $\sh{V}$ in $X^*(\mb{A}_X)\otimes_\Z\Q$, then $\sh{V}^\perp$ is strictly convex. 
In \cite[\S 6.1]{SakSpherical}, Sakellaridis defined a based root system $\Phi_X$ whose set of simple roots $\Delta_X$ lie on extremal rays of $\sh{V}^\perp$ intersected with $X^*(\mb{A}_X)$. Elements of $\Delta_X$ are called (normalized) \emph{spherical roots} of $\mb{X}$. There is a canonical embedding $X^*(\mb{A}_X)\hookrightarrow X^*(\mb{A})$, allowing us to view spherical roots as characters of $\mb{A}$. The Weyl group of $\Phi_X$ is the \emph{little Weyl group}, denoted by $W_X$. It is canonically contained in the Weyl group of $\mb{G}$, which we will denote by $W$.

Knop and Schalke defined a dual group $\check{G}_X$ whose coroot system is $(X^*(\mb{A}_X),\Phi_X)$ \cite{KnopSchalke}. It is a subgroup of the Langlands dual group $\check{G}$ of $\mb{G}$. We have an equality $\check{G}_X=\check{G}$ only if for all simple roots $\alpha$, the pair $(\mathring{\mb{X}},\alpha)$ is of type T in the classification below. Spherical varieties $\mb{X}$ with this property are said to be \emph{strongly tempered}.

\subsubsection{Classification of roots}\label{ss:Classification}
Let $\alpha$ be a simple root of $\mb{G}$. Let $\mb{P}_\alpha$ be its associated standard parabolic subgroup, with radical $\mathcal{R}(\mb{P}_\alpha)$. Let $\mb{Y}$ be a $\mb{B}$-orbit in $\mb{X}$. The geometric quotient
\[
    \mb{Y}\mb{P}_\alpha/\mathcal{R}(\mb{P}_\alpha)
\]
is a homogeneous spherical variety of $\mathrm{PGL}_2$. There are four cases.
\begin{itemize}
    \item Type G: $\ast=\mathrm{PGL}_2\backslash\mathrm{PGL}_2$.
    \item Type U: $\mb{S}\backslash\mathrm{PGL}_2$, where $\mb{S}$ is the subgroup of a Borel subgroup which contains the unipotent radical.
    \item Type T: $\mb{T}\backslash\mathrm{PGL}_2$, where $\mb{T}$ is a maximal torus.
    \item Type N: $\mb{N}(\mb{T})\backslash\mathrm{PGL}_2$, where $\mb{N}(\mb{T})$ is the normalizer of the torus. This case is excluded by our assumption.
\end{itemize}
We say the pair $(\mb{Y},\alpha)$ is of type G, U, or T according to this classification.

Somewhat confusingly, there is a related but separate classification of spherical roots. Note that it is also standard terminology that the spherical roots only refer to the \emph{simple} roots of the spherical root system $\Phi_X$. Let $\gamma$ be a spherical root. Under our standing assumptions, it has one of the two types.
\begin{itemize}
  \item Type T: This happens exactly if $\gamma$ is a root of $\mb{G}$.
  \item Type G: In all other cases, $\gamma=\alpha+\beta$, where $\alpha,\beta$ are orthogonal roots of $\mb{G}$, and they are simple roots in some choice of basis.
\end{itemize}
More details, as well as a proof of the above dichotomy, can be found in \cite[\S 6.2]{SakSpherical}. The ample examples there should illustrate the classification. We simply note that by the description in \emph{loc.~cit.}, if there is a pair $(\mb{Y},\alpha)$ of maximal rank of type T, then there is a spherical root of type T.

\subsection{Equivariant bundles}
For applications to Euler systems, it is necessary to consider certain torus bundles over $\mb{X}$.
\begin{definition}\label{defn:CombTrivial}
  Let $\widetilde{\mb{X}}\to\mb{X}$ be an $\bG$-equivariant $\mb{T}$-bundle, where $\mb{T}$ is a torus. It is \emph{combinatorially trivial} if its restriction to $\open{\mb{X}}$ is trivial as a $\mb{B}$-equivariant bundle.
\end{definition}

In particular, $\widetilde{\mb{X}}$ is a spherical variety for the group $\widetilde{\bG}:=\mb{G}\times\mb{T}$, and the stabilizer of any lift of $x_0$ is isomorphic to $\mb{H}$ by projection to the first factor. It is immediate that
\[
  \mb{A}_{\widetilde{X}}=\mb{A}_{X}\times\mb{T}
\]
as quotients of the maximal torus $\mb{A}\times\mb{T}$. As a result, all of the combinatorial data introduced in the previous subsection are either unchanged or change in a trivial way. This explains our terminology. In particular, we note that the classification of spherical roots is unchanged.

\begin{example}\label{ex:CombTriv}
  If $\mb{X}=\mb{H}\backslash\mb{G}$ is homogeneous, then the datum of an equivariant $\mb{T}$-bundle over $\mb{X}$ is equivalent to a character $\nu:\mb{H}\to\mb{T}$ by the recipe
  \[
    \widetilde{\mb{X}}=\widetilde{\mb{H}}\backslash(\mb{G}\times\mb{T}),\quad \widetilde{\bH}=\{(h,\nu(h))\,|\,h\in\mb{H}\}
  \]
  It is combinatorially trivial if and only if $\nu|_{\mb{H}\cap\mb{A}}=1$, where recall that $\mb{A}$ is a maximal torus in the Borel subgroup $\mb{B}$ such that $\mb{H}\mb{B}$ is open in $\mb{G}$. This is the condition imposed by Loeffler when he systematically studied wild norm relations in \cite[\S 4.6]{LoefflerSpherical}.

  Observe that if $\mb{H}$ contains a maximal unipotent subgroup of $\mb{G}$ (so $\mb{X}$ is horospherical), then any combinatorially trivial bundle is trivial, since $\nu$ has to be trivial on the unipotent part.
\end{example}

\begin{remark}\label{rmk:CombTriv}
  Combinatorially trivial in particular implies that
  \[
    \check{G}_{\widetilde{X}}=\check{G}_{X}\times\check{\mb{T}}.
  \]
  On the representation theory side, this means if $\pi$ is $\mb{X}$-distinguished, then $\pi\times\chi$ is $\widetilde{\mb{X}}$-distinguished for any character $\chi$ of the torus. So we are looking a family of character twists for $\pi$, exactly what is needed for Euler system constructions. If $\mb{T}=\G_m$, then this is related to having an ``$s$-variable'' in the $L$-function.
  
  The existence of such a bundle is unfortunately quite restrictive, and it rules out very interesting cases, including the triple product case $\mb{G}=\mathrm{SO}_3\times\mathrm{SO}_4$, $\mb{H}=\mathrm{SO}_3$. Indeed, the Ichino formula only calculates the central value, and $\mb{H}$ does not admit non-trivial characters in this case.
\end{remark}

The following technical proposition plays a crucial role in understanding the divisibility properties needed to characterize the trace map considered in the next section. Its proof consists of expanding the $\mb{B}$-trivialization over the open orbit to the entire variety using simple reflections.

\begin{prop}\label{prop:Triviality}
    Suppose $\mb{X}$ is homogeneous and has only spherical roots of type G, then every combinatorially trivial $\mb{T}$-bundle over $\mb{X}$ is trivial.
\end{prop}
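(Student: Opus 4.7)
By Example~\ref{ex:CombTriv}, a combinatorially trivial $\mb{T}$-bundle corresponds to a character $\nu:\mb{H}\to\mb{T}$ with $\nu|_{\mb{H}\cap\mb{A}}=1$, and the bundle is trivial precisely when $\nu=1$. Composing with characters of $\mb{T}$, the proposition reduces to the case $\mb{T}=\G_m$. Since any such $\nu$ is automatically trivial on the unipotent radical $\mb{H}^u$ and on the commutator $[\mb{H},\mb{H}]$, the statement becomes: under the type-G hypothesis, the subgroup generated by $\mb{H}^u$, $[\mb{H},\mb{H}]$ and $\mb{H}\cap\mb{A}$ equals $\mb{H}$, or equivalently the natural map $\mb{H}\cap\mb{A}\to (\mb{H}/\mb{H}^u)^{\ab}$ is surjective as algebraic groups.

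The plan is to first reduce to the case where $\mb{H}$ is reductive and $\mb{L}(\mb{X})=\mb{G}$ by a standard Levi decomposition argument: up to choosing a generic basepoint, $\mb{H}\subseteq\mb{P}(\mb{X})$ and its reductive quotient embeds in $\mb{L}(\mb{X})$ as a spherical subgroup with the same spherical root system. In this reduced setting, the hypothesis says every spherical root has the form $\gamma=\alpha+\beta$ for distinct orthogonal simple roots of $\mb{G}$, and in particular no simple root of $\mb{G}$ is itself a spherical root.

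To finish, I would use the Picard exact sequence
\[
  0\to X^*(\mb{X})\to\bigoplus_D \Z\cdot D\to X^*(\mb{H})\to 0
\]
for the homogeneous spherical variety, where $D$ ranges over the colors, together with Luna's classification of colors by pairs $(\open{\mb{X}},\alpha)$. The absence of type-T pairs (guaranteed by the hypothesis, cf.~\S\ref{ss:Classification}) means every color has type U with $\mb{B}$-weight equal to a simple root $\alpha$ of $\mb{G}$, and a rank count then identifies the image of $X^*(\mb{H})\to X^*(\mb{H}\cap\mb{A})$ with a sublattice of the expected rank, yielding injectivity. The main obstacle I anticipate is ruling out nontrivial $\Z$-linear combinations of color classes mapping to zero in $X^*(\mb{H}\cap\mb{A})$; this is where the type-G structure (specifically, the orthogonality $\alpha\perp\beta$ in each spherical root $\gamma=\alpha+\beta$) is used substantively, preventing accidental cancellations among the restrictions $\alpha|_{\mb{H}\cap\mb{A}}$ that would otherwise occur in the presence of a type-T root.
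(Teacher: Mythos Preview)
Your reduction step contains a concrete error: the claim that ``up to choosing a generic basepoint, $\mb{H}\subseteq\mb{P}(\mb{X})$'' is false in general, and in particular fails in exactly the situations the proposition addresses. Take $\mb{G}=\mathrm{SL}_2\times\mathrm{SL}_2$ with $\mb{H}=\mathrm{SL}_2$ embedded diagonally. Here $\mb{P}(\mb{X})=\mb{B}$ is a Borel, the unique spherical root $\alpha_1+\alpha_2$ is of type G, yet $\mb{H}$ is not solvable and so lies in no proper parabolic for any basepoint. (Incidentally, $\mb{H}$ is already reductive by Matsushima's criterion since $\mb{X}$ is affine, so that half of the reduction is moot.) Beyond this, the color-lattice portion is only a plan: the exact sequence you write requires hypotheses on $\mb{G}$ (e.g.\ simply connected) to identify the cokernel with $X^*(\mb{H})$, and the phrase ``a rank count yielding injectivity'' does not establish injectivity---ranks can agree while a kernel persists. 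The obstacle you anticipate is real and you have not shown how the type-G condition resolves it.

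The paper's argument bypasses all of this with a direct propagation of the trivializing section across $\mb{B}$-orbits. Starting from the given $\mb{B}$-equivariant trivialization over $\mathring{\mb{X}}$, one passes from an orbit $\mb{Y}$ to the closed orbit in $\mb{Y}\mb{P}_\alpha$ whenever the pair $(\mb{Y},\alpha)$ is of type U: the point is that the stabilizer in the $\mathrm{PGL}_2$-quotient $\mb{Y}\mb{P}_\alpha/\mathcal{R}(\mb{P}_\alpha)$ is then horospherical, so by Example~\ref{ex:CombTriv} the character descending there is forced to be trivial, and the section extends over $\mb{Y}\mb{P}_\alpha$. The type-G hypothesis on spherical roots rules out type-T pairs at every maximal-rank orbit (this is the step you correctly identify via \S\ref{ss:Classification}), so only types G and U occur and every $\mb{B}$-orbit is eventually reached. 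Uniqueness of $\mb{B}$-equivariant sections on each orbit makes the extensions glue, giving a global $\mb{B}$-equivariant trivialization; the upgrade to $\mb{G}$-equivariant is then immediate. No Levi reduction, no Picard sequence, no rank count.
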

\begin{proof}
    Let $\mb{Y}$ be a $\mb{B}$-orbit in $\mb{X}$ of maximal rank. Let $\alpha$ be a simple root of $\bG$ with associated parabolic $\mb{P}_\alpha$. The pair $(\mb{Y},\alpha)$ cannot have type T, since otherwise there would be a spherical root of type T. It follows that the pair is of type G or U. In particular, $\mb{Y}\mb{P}_\alpha$ is either $\mb{Y}$ or the disjoint union of an open $\mb{B}$-orbit and a closed $\mb{B}$-orbit. In the latter case, both $\mb{B}$-orbits have the same rank. It follows that all $\mb{B}$-orbits are of maximal rank, so all pairs $(\mb{Y},\alpha)$ are not of type T.

    Let $\widetilde{\mb{X}}$ be any equivariant $\G_m$-bundle. Suppose it trivializes as a $\mb{B}$-equivariant bundle over $\mb{Y}$ and $\alpha$ is a root such that $\mb{Y}$ is the open orbit in $\mb{Y}\mb{P}_\alpha$. Let $\mb{H}$ be the stabilizer of a point in $\mb{Y}$, so the bundle structure becomes a map $\nu:\mb{H}\to\mb{T}$. Consider the short exact sequence of groups
    \[
        1\to\mb{H}\cap\mathcal{R}(\mb{P}_\alpha)\to\mb{H}\cap\mb{P}_\alpha\to\big((\mb{H}\cap\mb{P}_\alpha)\mathcal{R}(\mb{P}_\alpha)\big)/\mathcal{R}(\mb{P}_\alpha)\to 1
    \]
    Since $\mathcal{R}(\mb{P}_\alpha)\subseteq\mb{B}$, the restriction of $\nu$ to the first term is trivial, so it descends to a character $\bar{\nu}$ on the quotient. On the other hand, this quotient is the stabilizer group in the $\mathrm{PGL}_2$-spherical variety $\mb{Y}\mb{P}_\alpha/\mathcal{R}(\mb{P}_\alpha)$, which is necessarily of type U. But then $\bar{\nu}$ is trivial by Example~\ref{ex:CombTriv}. Therefore, $\nu|_{\mb{H}\cap\mb{P}_\alpha}=1$. In other words, $\widetilde{\mb{X}}$ trivializes as a $\mb{B}_\alpha$-bundle over $\mb{Y}\mb{P}_\alpha$.

    Suppose that $\widetilde{\mb{X}}$ is combinatorially trivial, then we can construct a $\mb{B}$-equivariant section over $\mathring{\mb{X}}$. By the above discussion, this section extends to $\mathring{\mb{X}}\mb{P}_\alpha$ for any simple root $\alpha$. We may continue this process starting from the closed orbits in $\mathring{\mb{X}}\mb{P}_\alpha$. Since $\mb{X}$ is homogeneous, all $\mb{B}$-orbits are reached this way. Moreover, all of the sections glue since the $\mb{B}$-equivariant sections on a $\mb{B}$-orbit are unique up to a constant multiple. Therefore, we have shown that $\widetilde{\mb{X}}$ is trivial as a $\mb{B}$-equivariant bundle.

    In particular, $\widetilde{\mb{X}}$ is trivial as a line bundle, so the only $\mb{G}$-equivariance structure on it comes from a character $\chi:\mb{G}\to\mb{T}$. Its restriction to $\mb{B}$ is trivial by the discussion above, so $\chi$ itself must be trivial.
\end{proof}

\section{Local computations}\label{sec:Local}
We now specialize to the case where $F$ is a local field, with ring of integers $\Or$ and residue field $\F$. Let $q=\#\F$. Let $\varpi$ be a uniformizer. For any of the varieties denoted by bold letters, we will use the normal font to denote its $F$-points, so for example $X=\mb{X}(F)$.

The group $G$ has a natural smooth left action on the function space $C^\infty(X,\C)$. In this section, we recall the spectral decomposition of its unramified part, following the works of Sakellaridis and his collaborators. The goal is to observe certain automatic divisibility properties and match them with a corresponding phenomenon in geometry.

\subsection{Assumptions}\label{ss:LocalAssumption}
For all results in this section, we need to impose the following ``good reduction'' hypotheses. In the global setting, they hold for all but finitely many places.
\begin{assumption}
  Both $\bG$ and $\mb{X}$ extend to smooth schemes over $\spec\Or$, which we denote by the same letter. Moreover, all statements of \cite[Proposition 2.3.5]{SakANT} hold. In particular,
  \begin{enumerate}
    \item $\bG$ is reductive and $\mb{X}$ is affine.
    \item The chosen base point $x_0$ belongs to $\mb{X}(\Or)$.
    \item A local structure theorem for $\mb{X}$ and its compactifications hold.
  \end{enumerate}
\end{assumption}

We will write $K=\bG(\Or)$. By hypothesis, this is a hyperspecial maximal compact subgroup of $G$.

\subsection{Generalized Cartan decomposition}
Under the assumptions made above, we will state a generalized Cartan decomposition due to Gaitsgory--Nadler in the equal characteristics case \cite[Theorem 8.2.9]{GN-Dual} and adpoted to the mixed characteristics case by Sakellaridis \cite[Theorem 2.3.8]{SakANT}. Recall that $\Lambda_X\simeq\mb{A}_{\mb{X}}(F)/\mb{A}_{\mb{X}}(\Or)$ contains an $\mb{X}$-anti-dominant monoid $\Lambda_X^+$. 
\begin{prop}[Genralized Cartan decomposition]\label{prop:GeneralCartan}
    For each $\check{\lambda}\in\Lambda_X^+$, fix a representation $x_{\check{\lambda}}\in A_X$, viewed as an element of $X$ by the orbit map through $x_0$. Then there is a disjoint union decomposition
    \[
        X^\bullet=\bigsqcup_{\check{\lambda}\in\Lambda_X^+}x_{\check\lambda} K.
    \]
\end{prop}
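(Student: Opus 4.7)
The plan is to follow the strategy of Gaitsgory--Nadler and Sakellaridis: reduce to a toric slice by the local structure theorem for $\mb{X}$, then apply the classical Cartan decomposition on a torus. The standing hypothesis in \S\ref{ss:LocalAssumption} invokes \cite[Proposition 2.3.5]{SakANT} and therefore guarantees the integral version of the local structure theorem for $\mb{X}$ as well as the existence of a smooth toroidal compactification $\mb{X}\hookrightarrow\bar{\mb{X}}$ whose fan realizes the full valuation cone $\sh{V}$. The assertion naturally splits into surjective covering $X^\bullet=\bigcup_{\check\lambda}x_{\check\lambda}K$ and pairwise disjointness of the cosets.

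For the covering step, I would pick a deepest boundary orbit of $\bar{\mb{X}}$ and apply the local structure theorem there, producing an affine open $\mb{P}(\mb{X})^-$-stable neighborhood of the form $\mb{P}(\mb{X})^-\times^{\mb{L}(\mb{X})}\mb{S}$, in which $\mb{S}$ is a smooth affine toric embedding of $\mb{A}_X$ whose fan contains $\sh{V}$ as a sub-fan. The Iwasawa decomposition $G=\mb{P}(\mb{X})^-(F)\cdot K$, combined with the fact that $\mb{P}(\mb{X})^-(\Or)$ acts transitively on $\mb{P}(\mb{X})^-$-cosets inside the slice, allows each $K$-orbit on $X^\bullet$ to be translated so that it meets the open part $S(F)\cap X^\bullet$. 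One is then reduced to the problem of decomposing $S(F)$ into $\mb{L}(\mb{X})(\Or)$-orbits; by unfolding to the toric piece $\mb{A}_X(F)$ and using the ordinary Cartan decomposition for a split torus, these orbits are parametrized precisely by $\Lambda_X^+=\sh{V}\cap\Lambda_X$, because a cocharacter points into $\mb{S}$ if and only if it lies in the cone $\sh{V}$ defining the fan.

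For disjointness, one invokes the valuation--cone correspondence: each $\check\lambda\in\Lambda_X$ is read off from the asymptotic behavior of $\mb{B}$-semi-invariant rational functions via the map $v\colon F(\mb{X})^{(\mb{B})}\to\Lambda_X$. Because these semi-invariants have well-defined valuations on each $B$-orbit and $K$ is compact (so it acts by units on them), two representatives $x_{\check\lambda}$ and $x_{\check\mu}$ can lie in the same $K$-orbit only when $\check\lambda=\check\mu$ in $\Lambda_X$.

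The main obstacle is to ensure that every $K$-orbit actually reaches the toric slice after Iwasawa, and that the orbit count on the slice is exactly $\Lambda_X^+$ rather than a proper subset such as the image of the negative Weyl chamber. This forces one to use a toroidal embedding adapted to the full valuation cone $\sh{V}$ and to transfer $K$-orbit data to $L(\mb{X})(\Or)$-orbit data at the integral level without loss -- exactly the content of the more technical parts of \cite[Theorem 8.2.9]{GN-Dual} and \cite[Theorem 2.3.8]{SakANT}. Once this bridge is in place, both steps follow from standard toric considerations.
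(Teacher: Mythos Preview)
The paper does not supply its own proof of this proposition; it is stated as a known result, attributed to Gaitsgory--Nadler \cite[Theorem 8.2.9]{GN-Dual} in equal characteristic and to Sakellaridis \cite[Theorem 2.3.8]{SakANT} in mixed characteristic. Your sketch is a faithful outline of the argument in those references---local structure theorem to pass to a toric slice, Iwasawa decomposition to move every $K$-orbit into the slice, and $\mb{B}$-semi-invariant valuations for disjointness---and you correctly flag the one genuinely delicate point, namely that the toroidal embedding must realize the full cone $\sh{V}$ (not just the image of the negative Weyl chamber) and that the transfer of orbit data to the integral slice is where the work lies. Since the paper simply cites the result, there is nothing further to compare.
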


\subsubsection{Representation of functions}
Since $\mb{X}-\mb{X}^\bullet$ is Zariski closed in $\mb{X}$, a function $\phi\in C_c^\infty(X,\C)$ is uniquely determined by its restriction to $X^\bullet$, where it no longer has to be compactly supported. By the generalized Cartan decomposition, we have a canonical isomorphism $C^\infty(X^\bullet,\C)^K\iso\C[[\Lambda_X^+]]$ defined by
\begin{equation}\label{eqn:PowerSeries}
    \phi\mapsto\sum_{\check{\lambda}\in\Lambda_X^+}\phi(x_{\check{\lambda}})e^{\check{\lambda}}\in\C[[\Lambda_X^+]]
\end{equation}
where $e^{\check{\lambda}}$ is a formal symbol representing the monoid element $\check{\lambda}$. We will use such a formal power series to represent functions in $C_c^\infty(X,\C)$ in the future.

\subsubsection{Interaction with bundle}  
Let $\widetilde{\mb{X}}\to\mb{X}$ be a combinatorially trivial $\G_m$-bundle defined over $\spec\Or$. Let
\[
  J_0=\Or^\times,\quad J_1=\{x\in F^\times\,|\,x\equiv 1\pmod{\varpi}\}.
\]
The maximal compact subgroup $K^0:=\widetilde{\bG}(\Or)$ is equal to $\mb{G(\Or)}\times J_0$. For global reasons, we are also interested in the subgroup
\[
  K^1:=\bG(\Or)\times J_1\subseteq K^0.
\]
Clearly, $K^1$ is a normal subgroup of $K^0$ whose quotient is $\F^\times$, which has size $q-1$. The main result of this subsection is to describe the space $\widetilde{X}/K^1$.

\begin{definition}
  A coroot $\check\lambda$ is said to \emph{lie on a wall of type T} if there is a spherical root $\gamma$ of type T such that $\bra\check{\lambda},\gamma\ket=0$.
  
  A point $x\in X$ \emph{lies on a wall of type T} if it is in the orbit $x_{\check\lambda} K^0$, where $\check\lambda$ lies on a wall of type T.
\end{definition}

We are interested in understanding the image of the trace map
\[
    \Tr_{K_0}^{K^1}:C_c^\infty(\widetilde{X},\Z_p)^{K^1}\to C_c^\infty(\widetilde{X},\Z_p)^{K^0},
\]
which is equivalent to studying when restricting to $K^1$ splits an $K^0$-orbit in the generalized Cartan decomposition. We give a partial implication, which we suspect can be upgraded to an equivalence. The proof is based on considering asymptotic degenerations of the bundle $\widetilde{\mb{X}}\to\mb{X}$.

\begin{prop}\label{prop:Volume}
  Let $x\in\tX$. If the action of $K^1$ on $xK^0$ has fewer than $q-1$ orbits, then $x$ lies on a wall of type T.
\end{prop}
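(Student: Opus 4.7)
The plan is to translate the orbit count into a statement about the image of a character, and then analyze that character via the structure theory of spherical varieties. For $\tilde{x} \in \widetilde{X}$ with image $\bar{x} \in X$, the projection $\widetilde{G} \to G$ restricts to a bijection $\mathrm{Stab}_{\widetilde{G}}(\tilde{x}) \iso \mathrm{Stab}_G(\bar{x})$, and the second projection gives an algebraic character $\chi^{\bar{x}} \colon \mathrm{Stab}_G(\bar{x}) \to \G_m$. Since any continuous character of a compact group lands in $\Or^\times$, one has $\mathrm{Stab}_{K^0}(\tilde{x}) \cong \mathrm{Stab}_G(\bar{x}) \cap K$, and its image in $K^0/K^1 \cong \F^\times$ equals $\chi^{\bar{x}}(\mathrm{Stab}_G(\bar{x}) \cap K) \bmod \varpi$. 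The number of $K^1$-orbits on $\tilde{x}K^0$ is therefore $[\F^\times : \bar{S}]$ for this image $\bar{S}$, and the hypothesis says $\bar{S}$ is a non-trivial subgroup of $\F^\times$.

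By the generalized Cartan decomposition, I may replace $\bar{x}$ by $x_{\check\lambda}$ for the unique $\check\lambda \in \Lambda_X^+$ with $\bar{x} K = x_{\check\lambda} K$. Fixing a lift $a_{\check\lambda} \in \mb{A}(F)$ of $\check\lambda$, conjugation identifies $\mathrm{Stab}_G(x_{\check\lambda}) \cap K$ with $H \cap a_{\check\lambda} K a_{\check\lambda}^{-1}$, and the character $\chi^{x_{\check\lambda}}$ with the character $\chi_0 \colon H \to \G_m$ defining the bundle (Example~\ref{ex:CombTriv}). The contrapositive to prove is then: if $\check\lambda$ lies on no wall of type T, then $\chi_0\bigl(H \cap a_{\check\lambda} K a_{\check\lambda}^{-1}\bigr) \subseteq J_1$.

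To establish this I would combine two inputs. First, combinatorial triviality is equivalent to the existence of a $\mb{B}$-equivariant section of $\widetilde{\mb{X}} \to \mb{X}$ over $\mathring{\mb{X}}$, which forces $\chi_0|_{H \cap B} = 1$. Second, I would extend the Gaitsgory--Nadler/Sakellaridis proof of the generalized Cartan decomposition to keep track of the integral subgroup $H \cap a_{\check\lambda} K a_{\check\lambda}^{-1}$, using a rank-one reduction in the spirit of the proof of Proposition~\ref{prop:Triviality}. For each pair $(\mb{Y},\alpha)$ of a $\mb{B}$-orbit and a simple root, the $\mathrm{PGL}_2$-quotient $\mb{Y}\mb{P}_\alpha/\mathcal{R}(\mb{P}_\alpha)$ is of type G, U, or T (\S\ref{ss:Classification}). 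Pairs of type G or U contribute only unipotent elements (automatically killed by $\chi_0$) or trivial characters (by Example~\ref{ex:CombTriv} applied to the quotient). Only a type-T pair can introduce a genuinely new toric direction in $H \cap a_{\check\lambda} K a_{\check\lambda}^{-1}$, and such a direction survives the integral conjugation by $a_{\check\lambda}$ precisely when $\langle\check\lambda,\gamma\rangle = 0$ for the associated spherical root $\gamma$.

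The main obstacle will be executing the rank-one reduction integrally and gluing the local conclusions into a global statement. The Gaitsgory--Nadler/Sakellaridis proof proceeds via the local structure theorem and a degeneration argument, so I would need an integral refinement adapted to the bundle $\widetilde{\mb{X}} \to \mb{X}$. The type G/U vs type T dichotomy of \S\ref{ss:Classification} enters decisively here: only at a type-T wall does the degenerate stabilizer retain a toric factor outside $H \cap \mb{A}$, which by combinatorial triviality is the only place where $\chi_0$ can escape $J_1$.
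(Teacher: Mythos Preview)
Your reduction in the first two paragraphs is correct and is essentially the starting point of the paper's proof as well: one finds $g\in K$ with $(x,1)g=(x,\alpha)$ for some $\alpha\not\equiv 1\pmod\varpi$, which is your statement that $\bar S$ is non-trivial. Where you diverge is in the strategy for the remaining step. You propose an integral refinement of the rank-one reduction underlying the generalized Cartan decomposition, tracking $H\cap a_{\check\lambda}Ka_{\check\lambda}^{-1}$ directly through the type G/U/T trichotomy.

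The paper takes a different and cleaner route: a degeneration to a boundary orbit in a spherical compactification. The key trick is that the orbit equation iterates: $(x^n,1)g=(x^n,\alpha)$ for all $n\geq 1$, using the group structure of $A_X$ inside $\mathring X$. One then builds, via Luna--Vust theory, a spherical compactification $\overline{\mb X}$ in which $x^n\to x^\infty$ lies on a $\bG$-orbit $\mb Z$ whose spherical roots are exactly $\Theta=\{\gamma\in\Delta_X:\langle\check\lambda,\gamma\rangle=0\}$. If $\check\lambda$ lies on no wall of type T, then $\Theta$ contains only type-G roots, so Proposition~\ref{prop:Triviality} forces the bundle to be trivial over $\mb Z$; taking the limit in the orbit equation then gives a contradiction.

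Your approach might be made to work, but the step you flag as ``the main obstacle'' is genuinely the hard part, and it is not clear it is easier than the paper's argument. You would need to show that every element of $H\cap a_{\check\lambda}Ka_{\check\lambda}^{-1}$ on which $\chi_0$ is non-trivial modulo $\varpi$ arises from a type-T pair with $\langle\check\lambda,\gamma\rangle=0$; this requires controlling the integral stabilizer orbit-by-orbit as you move through the Bruhat-type stratification, and the gluing is delicate. The paper's compactification trick sidesteps this entirely: by pushing to $\Theta$-infinity it replaces the integral analysis with the purely geometric Proposition~\ref{prop:Triviality}, at the cost of invoking the structure of $\bG$-orbits at the boundary \cite[Proposition 2.3.8(3)]{SVSpherical}. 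What the paper's approach buys is that no new integral statement about the Cartan decomposition is needed; what your approach would buy, if carried out, is a more explicit description of when the orbit count drops, which could be of independent interest.
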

\begin{proof}
    In this proof, we will only work over the generic fibre, so the standard theory of spherical embeddings applies without comment. Further note that we are only working over the open $\mb{G}$-orbit $\mb{X}^\bullet$, so we will assume that $\mb{X}$ is homogeneous.
    
    Over the open orbit $\mathring{\mb{X}}$, fix a trivialization
    \[
        \widetilde{\mb{X}}\times_{\mb{X}}\mathring{\mb{X}}\simeq\mathring{\mb{X}}\times\G_m
    \]
    as a $\mb{B}$-equivariant bundle. Identify $A_X$ with a subset of $\mathring{X}$ using the orbit map. Let $x\in A_X^+$ be an element such that $K^1$ splits $xK^0$ into less than $q-1$ pieces. Then there exists $g\in K$ such that $(x,1)g=(x,\alpha)$, where $\alpha\not\equiv 1\pmod{\varpi}$. For any $n\geq 1$, we have
    \begin{equation}\label{eqn:orbit1}
        (x^n,1)g=(x^{n-1},1)(x,1)g=(x^n,\alpha),
    \end{equation}
    where we are using the identification of $A_X$ (as a group) with its orbit through the base point $x_0$ (a subset of $X$). It follows from this equation that the orbit $x^n K^0$ also does not split completely.

    Let $\check{\lambda}$ be the image of $x$ in $\Lambda_X^+$. Suppose for contradiction that $\check{\lambda}$ does not lie on a wall of type T, then the set $\Theta:=\{\gamma\in\Delta_X\,|\,\bra\check{\lambda},\gamma\ket=0\}$ consists only of spherical roots of type G. The idea of the proof is to show that the bundle trivializes at $\Theta$-infinity. This notion is introduced in \cite[\S 2.3]{SVSpherical}, and we recall some aspects of their construction.
    
    Let $\sh{F}_\Theta$ be the face in $\sh{V}$ consisting of vectors orthogonal to all spherical roots in $\Theta$, then it contains $\check{\lambda}$ in its relative interior. Choose a fan decomposition of $\sh{F}_\Theta$ so that $\check{\lambda}$ belongs to a face $\sh{F}$. Let $\overline{\mb{X}}$ be the spherical embedding defined by this fan decomposition, following Luna--Vust theory \cite[Theorem 3.3]{Knop91}. The face $\sh{F}$ corresponds to a $\mb{G}$-orbit $\mb{Z}$ satisfying the condition that $x^n$ converges to a point $x^\infty\in\mb{Z}$.
    
    The combinatorially trivial $\G_m$-bundle $\widetilde{\mb{X}}$ also extends to the boundary using the compactification given by the same fan decomposition. By \cite[Proposition 2.3.8(3)]{SVSpherical}, the spherical roots of $\mb{Z}$ are exactly $\Theta$, so $\mb{Z}$ is a spherical variety with only spherical roots of type G. Proposition~\ref{prop:Triviality} implies that the bundle over $\mb{Z}$ must split. On the other hand, taking limit as $n\to\infty$ in equation~\eqref{eqn:orbit1}, we see that $g$ translates between the $q-1$ disks above the point $x^\infty$. This is a contradiction, proving that $\check{\lambda}$ must lie on a wall of type T.
\end{proof}

\subsection{Relative Satake isomorphism}\label{ss:Satake}
In this subsection, we will introduce the relative Satake isomorphism. In the homogeneous case, the results are unconditional and due to Sakellaridis \cite{SakSpherical, SakInvSatake}. In the strongly tempered case, the analogous result holds in the equal-characteristic case \cite{SakWang}, and we conjecture that it carries over to the mixed characteristic case.

\subsubsection{Hecke algebra and Satake isomorphism}\label{ss:Normalization}
Fix the Haar measure on $G$ so that $K$ has volume 1, then we can define the Hecke algebra $\sh{H}(G):=C_c^\infty(K\backslash G/K,\C)$ whose algebra structure is given by convolution
\[
    (f_1\ast f_2)(x)=\int_G f_1(xg^{-1})f_2(g)\,dg
\]
This algebra acts on $C_c^\infty(X,\C)$ on the left in the natural way
\[
    (f\cdot\phi)(x)=\int_G \phi(xg)f(g)\,dg.
\]
Note that when $X=G$, this action is different from the convolution by an inverse.

Recall the classical Satake isomorphism, cf.~\cite{GrossSatake}
\begin{equation}\label{eqn:Satake}
    \sh{H}(G)\iso\sh{H}(A)^W\simeq\C[\Lambda]^W,\quad f\mapsto\hat{f}(t):=\delta(t)^{\frac{1}{2}}\int_N f(tn)dn
\end{equation}
where $\Lambda=X_*(\mb{A})=\mb{A}(F)/\mb{A}(\Or)$, and $\delta$ is the modulus character of the Borel subgroup containing $N$. By restriction, we have a natural ring homomorphism $\C[\Lambda]^W\to\C[\Lambda_X^+]$. Let $A^*$ be the set of unramified characters of $A$, so $A^*=X^*(\mb{A})\otimes_\Z\C^\times$ is an algebraic torus. There is a Fourier transform
\begin{equation}\label{eqn:Fourier}
    \C[\Lambda]\iso\C[A^*],\quad e^{\check{\lambda}}\mapsto (\chi\mapsto\bra\chi,\check{\lambda}\ket\in\C^\times)
\end{equation}
where as before, $e^{\check{\lambda}}$ is a formal symbol representing the monoid element $\check{\lambda}$. We will use the Fourier transform to identify the two spaces.

\subsubsection{Homogeneous case}\label{ss:HomogeneousCase}
In this subsection, suppose in addition that $\mb{X}$ is a homogeneous spherical variety. We will also assume Statements 6.3.1 and 7.1.5 from \cite{SakSpherical}. These are easy to verify for each given $\mb{X}$, and it is expected they always hold under the assumptions we have made already.

Let $A_X^*$ (resp.~$A^*$) be the set of unramified characters of $A_X$ (resp.~$A$). Since we are assuming all $\mb{B}$ orbits on $\mb{X}_{/\bar{F}}$ are defined over $F$, the tours $\mb{A}_X$ is split, and we have the canonical injection $A_X^*\hookrightarrow A^*$. We can alternatively describe $A_X^*$ as the $\C$-points of the dual torus $\check{\mb{A}}_X$.

Let $\delta_{(X)}$ (resp.~$\delta_{P(X)}$) be the modulus character for  $\mb{L}(\mb{X})\cap \mb{B}$ (resp.~parabolic subgroup $\mb{P}(X)$), cf.~\S\ref{sec:Structure}. The translate $\delta_{(X)}^{\frac{1}{2}}A_X^*$ defines a subvariety of $A^*$. Its ring of polynomial functions will be denoted by $\C[\delta_{(X)}^{\frac{1}{2}}A_X^*]$. We make the following canonical identifications
\begin{equation}\label{eqn:Shift}
    \C[\Lambda_X]\iso\C[A_X^*]\iso\C[\delta_{(X)}^{\frac{1}{2}}A_X^*],
\end{equation}
where the first map is the Fourier transform as defined in \eqref{eqn:Fourier}, and the second map is induced by translation.
\begin{theorem}[{\cite[Theorem 8.0.2]{SakSpherical}}]\label{thm:RelSatake}
  There exists an isomorphism
  \[
    \phi\mapsto\hat{\phi}:C_c^\infty(X,\C)^K\simeq\C[\delta_{(X)}^{\frac{1}{2}}A_X^*]^{W_X}
  \]
  such that
  \begin{enumerate}
    \item The image of the basic function
    \[
        \Phi_0:=\indf[\mb{X}(\Or)]
    \]
    is the constant function 1.
    \item Let $C_c^\infty(K\backslash G/K,\C)\simeq\C[A^*]^W$ be the classical Satake isomorphism, then the above isomorphism is equivariant with respect to the natural action on the left hand side and multiplication on the right hand side.
  \end{enumerate}
\end{theorem}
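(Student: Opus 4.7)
The plan is to follow Sakellaridis' original strategy: define the transform explicitly via the generalized Cartan decomposition, verify properties (1) and (2) by manipulation of Iwasawa-type integrals, and then address the main substantive claim that the image lies in the $W_X$-invariants on the shifted torus.

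\textbf{Definition and easy properties.} For $\phi \in C_c^\infty(X,\C)^K$, the generalized Cartan decomposition expresses $\phi$ uniquely as a finite linear combination $\sum_{\check\lambda\in\Lambda_X^+} \phi(x_{\check\lambda})\indf[x_{\check\lambda}K]$. I would first define
\[
    \hat{\phi}(\chi) \;=\; \sum_{\check{\lambda}\in\Lambda_X^+} \phi(x_{\check{\lambda}})\,\delta_{P(X)}(\check{\lambda})^{1/2}\,\chi(\check{\lambda}),
\]
regarded as an element of $\C[A_X^*]$ and then transported to $\C[\delta_{(X)}^{1/2} A_X^*]$ via the shift \eqref{eqn:Shift}. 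Property (1) is then immediate: $\Phi_0$ is supported on the single orbit $x_0 K$ corresponding to $\check\lambda=0$, contributing the constant $1$. For property (2), I would compute $(f\cdot\phi)(x_{\check\lambda})$ using the Iwasawa decomposition $G=BK$: the $N$-integration absorbs into the definition of $\hat f$ from \eqref{eqn:Satake}, and the measure-theoretic factors $\delta$ and $\delta_{P(X)}$ combine compatibly because the choice of good Levi ensures their ratio is trivial on $\mb{A}_X$. This yields $\widehat{f\cdot\phi}(\chi)=\hat f(\chi)\hat\phi(\chi)$, i.e.\ Hecke equivariance once we know the target ring.

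\textbf{The main obstacle.} The hard part will be proving that $\hat\phi$ actually lies in the $W_X$-invariants. I would establish this one simple reflection at a time: for each simple spherical root $\gamma\in\Delta_X$, show $s_\gamma\hat\phi=\hat\phi$. Using Statements 6.3.1 and 7.1.5 from \cite{SakSpherical}, which produce the rank-one spherical embedding and the corresponding unramified intertwining datum attached to $\gamma$, this should reduce to a rank-one computation on the wonderful compactification of a rank-one sub-spherical variety. The classification of $\gamma$ as type T or type G (\S\ref{ss:Classification}) then dictates the shape of the resulting functional equation: in each case it matches a Gindikin--Karpelevich / intertwining-operator identity whose normalization cancels exactly the modulus shift $\delta_{(X)}^{1/2}$. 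Polynomiality of $\hat\phi$ on the shifted torus then follows because its $W_X$-invariant extension is supported on $W_X\cdot\Lambda_X^+$, which is a finite set for any given $\phi$.

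\textbf{Bijectivity.} Once Hecke equivariance and $W_X$-invariance are in place, both $C_c^\infty(X,\C)^K$ and $\C[\delta_{(X)}^{1/2} A_X^*]^{W_X}$ are free modules of rank $|W/W_X|$ over the classical spherical Hecke algebra $\C[A^*]^W$ --- the source by a Bernstein-type argument using the Hecke action just established, the target by classical invariant theory for the inclusion $W_X\subseteq W$. Since the map sends the generator $\Phi_0$ to the generator $1$ and is $\C[A^*]^W$-linear by property (2), it must be an isomorphism.
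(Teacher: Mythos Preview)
The paper does not prove this theorem at all: it is stated with the citation \cite[Theorem 8.0.2]{SakSpherical} and used as a black box. So there is no ``paper's own proof'' to compare against; the relevant comparison is with Sakellaridis' original argument.

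That said, your sketch has two genuine gaps worth flagging. First, your proposed definition of $\hat\phi$ as the weighted value-sum $\sum_{\check\lambda\in\Lambda_X^+}\phi(x_{\check\lambda})\delta_{P(X)}(\check\lambda)^{1/2}\chi(\check\lambda)$ is \emph{not} the relative Satake transform. The correct transform is built from the Radon/asymptotics map $\pi_!\phi(t)=\int_N\phi(tn)\,dn$ (or equivalently from pairing with unramified $X$-eigenfunctions), and the two differ by exactly the kernel appearing in Theorem~\ref{thm:InverseSatake}: the values $\phi(x_{\check\lambda})$ are related to $\hat\phi$ through the factor $\prod(1-e^{\check\gamma})/\prod(1-q^{-d_{\check\theta}/2}e^{\check\theta})$, not directly. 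With your definition, property (2) would already fail, since $\widehat{\sh{L}\cdot\Phi_0}$ in your sense is not $\hat{\sh{L}}_{(X)}$ but $\hat{\sh{L}}_{(X)}$ times that kernel restricted to $\Lambda_X^+$.

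Second, the bijectivity argument is not right: neither side is free of rank $|W/W_X|$ over $\C[A^*]^W$ in general. Already in the group case $\mb{X}=\mb{H}\backslash(\mb{H}\times\mb{H})$ both sides are rank one over the spherical Hecke algebra of $\mb{H}$, while $|W/W_X|=|W_H|$. Sakellaridis' actual argument for bijectivity goes through the Plancherel decomposition and the explicit inverse formula, not a rank count. Your rank-one reduction strategy for $W_X$-invariance is closer in spirit to what is done, but it only becomes meaningful once the transform is defined correctly.
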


Let $\sh{L}\in C_c^\infty(K\backslash G/K,\C)$ be a Hecke operator with Satake transform $\hat{\sh{L}}\in\C[A^*]^W$. By restriction and using the identification \eqref{eqn:Shift}, this defines an element
\begin{equation}\label{eqn:ShiftedHecke}
    \hat{\sh{L}}_{(X)}\in\C[\delta_{(X)}^{\frac{1}{2}}A_X^*]\iso\C[\Lambda_X]
\end{equation}
We would like to compute the function $\sh{L}\cdot\Phi_0$ in terms of $\hat{\sh{L}}$, which can be done using the inverse Satake isomorphism \cite{SakInvSatake}. To state it, recall that Sakellaridis defined a multiset $\Theta_X^+$ in \cite[\S 7.1]{SakSpherical}. By our split hypothesis, all the signs $\sigma_{\check\theta}=1$ in the notation of \emph{loc.~cit.}, and the remaining two pieces of data can be combined as a representation of $\check{\mb{A}}\times\G_m$, cf.\ \cite[\S 9.3.5]{BZSV}. We will write an element of $\Theta_X^+$ in the form $(\check{\theta},d_{\check{\theta}})$, where $\check{\theta}$ is a cocharacter, and $d_{\check{\theta}}\in\Z$. 
\begin{theorem}\label{thm:InverseSatake}
    Let $\Theta_X^+$ be the multiset of weights of $\check{\mb{A}}\times\G_m$ described in \cite[\S 7.1]{SakSpherical}, then
    \[
        \delta_{P(X)}^{\frac{1}{2}}\cdot(\sh{L}\cdot\Phi_0):=\hat{\sh{L}}_{(X)}\cdot\frac{\prod_{\check{\gamma}\in\check{\Phi}_X^+}(1-e^{\check{\gamma}})}{\prod_{(\check{\theta},d_{\check\theta})\in\Theta_X^+}(1-q^{-\frac{1}{2}d_{\check{\theta}}}e^{\check{\theta}})}\bigg|_{\Lambda_X^+}
    \]
    under the identification~\eqref{eqn:PowerSeries}.
\end{theorem}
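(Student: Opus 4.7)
The plan is to reduce the statement to Sakellaridis's inverse relative Satake isomorphism \cite{SakInvSatake}, using the $\sh{H}(G)$-equivariance recorded in part (2) of the previous theorem. Since $\Phi_0\mapsto 1$ under the relative Satake transform, equivariance together with the definition of $\hat{\sh L}_{(X)}$ in \eqref{eqn:ShiftedHecke} immediately gives that the Satake transform of $\sh L\cdot\Phi_0$ is $\hat{\sh L}_{(X)}$ itself. So the theorem reduces to the following purely inversion statement: for arbitrary $\hat\phi\in\C[\delta_{(X)}^{1/2}A_X^*]^{W_X}$, the corresponding function $\phi\in C_c^\infty(X,\C)^K$ satisfies
\[
  \delta_{P(X)}^{1/2}\cdot\phi \;=\; \hat\phi\cdot\frac{\prod_{\check\gamma\in\check\Phi_X^+}(1-e^{\check\gamma})}{\prod_{(\check\theta,d_{\check\theta})\in\Theta_X^+}(1-q^{-d_{\check\theta}/2}e^{\check\theta})}\bigg|_{\Lambda_X^+}
\]
under the power-series identification \eqref{eqn:PowerSeries}.

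This universal inversion formula is exactly the content of \cite{SakInvSatake}, whose applicability is guaranteed by the hypotheses imported from \cite[\S 6.3, \S 7.1]{SakSpherical}. Conceptually, it is proved by computing $\hat{\Phi_0}$ as an explicit integral of Gindikin--Karpelevich type along the open $\mb B$-orbit, giving the denominator as a product of local $L$-factor-type terms indexed by the dual weights $(\check\theta,d_{\check\theta})\in\Theta_X^+$. Weyl-symmetrization over $W_X$ then produces the Weyl-denominator-type numerator $\prod_{\check\gamma\in\check\Phi_X^+}(1-e^{\check\gamma})$. The identity for arbitrary $\hat\phi$ follows by $\sh H(G)$-equivariance applied to the basic case $\hat\phi=1$, combined with the geometric series expansion on $\Lambda_X^+$.

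The only genuine bookkeeping is the appearance of $\delta_{P(X)}^{1/2}$ on the left. This factor arises because the relative Satake isomorphism is normalized using the Levi modulus $\delta_{(X)}^{1/2}$ (through the identification \eqref{eqn:Shift}), whereas the power series \eqref{eqn:PowerSeries} records raw function values $\phi(x_{\check\lambda})$; the local structure theorem for $\mb X$ identifies the ratio of these two normalizations on $\mb A_X$ with precisely $\delta_{P(X)}^{1/2}$. Verifying that $\delta_{(X)}^{1/2}$ built into $\hat{\sh L}_{(X)}$, $\delta_{P(X)}^{1/2}$ on the left, and the classical $\delta^{1/2}$ in the Satake transform \eqref{eqn:Satake} all combine consistently is a finite character computation on $\mb A_X$.

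The main obstacle, therefore, is not conceptual but notational: matching the conventions of \cite{SakSpherical,SakInvSatake} with those of \cite{BZSV} for the weights $(\check\theta,d_{\check\theta})\in\Theta_X^+$, and checking that the shifted torus $\delta_{(X)}^{1/2}A_X^*\subseteq A^*$ interacts with the Weyl-symmetrization of $\hat{\sh L}$ correctly. Once this is in place, the theorem follows directly from the inverse Satake formula applied to $\hat\phi=\hat{\sh L}_{(X)}$.
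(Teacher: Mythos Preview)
Your overall strategy---reduce everything to the inverse relative Satake isomorphism of \cite{SakInvSatake} via the $\sh{H}(G)$-equivariance and the fact that $\Phi_0\mapsto 1$---is exactly what the paper does. The reduction you describe is correct and the bookkeeping with the various modulus characters is fine.

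However, there is one genuine gap. The result in \cite{SakInvSatake} is stated and proved under the additional hypothesis that $\mb{X}$ is \emph{wavefront}, and this is \emph{not} among the standing assumptions of the paper. Indeed, the paper explicitly needs non-wavefront examples (see the remark in \S\ref{ss:Cornut} on Cornut's setting $\GL_n\backslash\mathrm{SO}_{2n+1}$). So you cannot simply say ``applicability is guaranteed by the hypotheses imported from \cite{SakSpherical}'' and be done; you must explain why the wavefront hypothesis can be dropped. The paper's argument for this is short but essential: tracing through the derivation in \cite{SakInvSatake}, the wavefront assumption is invoked only to pin down the exact normalization of the Plancherel measure, and the identity asserted in the theorem is insensitive to which $G$-invariant measure on $X$ one chooses. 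Hence the formula holds without wavefront. Your proposal should include this observation.
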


\begin{remark}\label{rmk:Cone}
  We make an important comment regarding the interpretation of the right hand side. It is a rational function, and we need to expand it as a formal Laurent series before taking the restriction to $\Lambda_X^+$. To make sense of this expansion, one needs to specify a cone $\sh{C}_X\subseteq\Lambda_X$. In the above theorem, this is the strictly convex cone (positive-)dual to the highest weights appearing in the $\bG$-module $F[\mathbf{X}]$, cf.~\cite[\S 6]{SakInvSatake}. In the homogeneous case, this is disjoint from $\Lambda_X^+$, so the right hand side describes a compactly supported function on $\Lambda_X^+$.
  
  The cone will not be present in our notations since our proofs do not depend on its choice, but it is nonetheless important to note that there is a fixed choice.
\end{remark}

\begin{proof}
    The result is \cite[Theorem 7.7]{SakInvSatake}, where it was proven under the additional assumption that $\mb{X}$ is wavefront. However, the only time the wavefront hypothesis is used in the derivation is to compute the normalization factor in the Plancherel measure. This exact normalization is not needed for the above result since the expressions do not depend on the choice of the $G$-invariant measure on $X$.
\end{proof}

\subsubsection{Strongly tempered case}
We no longer suppose $X$ is homogeneous. Instead, we work in the strongly tempered case, namely $\check{G}_X=\check{G}$. We state a conjectural analogue of Theorem~\ref{thm:InverseSatake}. In the case when $F$ is an equal-characteristic local field, the conjecture minus the final part is known by the main theorem of \cite{SakWang}.

In this case, we have $\mb{A}=\mb{A}_X$, so we can drop the subscript $X$ from all combinatorial data. Moreover, $\delta_{(X)}=1$ and $\delta_{P(X)}=\delta$. Finally, all elements of $\Theta_X^+$ have $d_{\check{\theta}}=1$ by the recipe constructing them. Therefore, the following is the exact analogue of Theorem~\ref{thm:InverseSatake}, with the final statement playing the role of \cite[Statement 7.1.5]{SakSpherical}.

\begin{conj}\label{conj:InverseSatake}
    Let $\Theta_X^+$ be the multiset of weights of $\check{\mb{A}}$ denoted by $\mathfrak{B}^+$ in \cite[Theorem 1.1.2]{SakWang}, then
    \[
        \delta^{\frac{1}{2}}\cdot(\sh{L}\cdot\Phi_0)=\hat{\sh{L}}\cdot\frac{\prod_{\check{\gamma}\in\check{\Phi}^+}(1-e^{\check{\gamma}})}{\prod_{\check{\theta}\in\Theta_X^+}(1-q^{-\frac{1}{2}}e^{\check{\theta}})}\bigg|_{\Lambda^+}
    \]
    Moreover, all weights of $\Theta_X^+$ are minuscule as coweights of $\mb{G}$.
\end{conj}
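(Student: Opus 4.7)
The displayed identity is established when $F$ has equal characteristic by the main theorem of \cite{SakWang}, so the plan for the first half of the conjecture is to transfer it to the mixed-characteristic setting by motivic integration. Concretely, once $\bG$ and $\mb{X}$ are fixed over $\spec\Or$, the generalized Cartan decomposition, the basic function $\Phi_0=\indf[\mb{X}(\Or)]$, the Hecke operator $\sh{L}$, and the composite $\sh{L}\cdot\Phi_0$ are all describable as specializations of Cluckers--Loeser motivic functions, and the right-hand side is a rational function in the formal variables $e^{\check{\lambda}}$ and $q^{\pm 1/2}$. The Ax--Kochen/Cluckers--Hales--Loeser transfer principle \cite{CHL-FL} then upgrades the identity from Laurent series local fields to sufficiently deep mixed-characteristic local fields of the same residue characteristic. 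This is the step flagged in \S\ref{1.4.1} as work to be carried out by the third author.

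For the minuscule statement, the plan is to read it off the construction of $\mathfrak{B}^+$ in \cite[\S 1.1]{SakWang}. In the strongly tempered case $\mb{A}_X=\mb{A}$ and every simple spherical root is a root of $\bG$, necessarily of type T, so the weights $\check{\theta}$ live in $X_*(\check{\mb{A}})=X^*(\mb{A})$. Sakellaridis--Wang build $\mathfrak{B}^+$ from the colors of $\mb{X}$ and their interaction with the spherical roots; under the strongly tempered assumption this data is rigidified, and each resulting $\check{\theta}$ should turn out to be a fundamental coweight, that is, to pair to $0$ or $1$ with every positive root of $\bG$. Where a uniform combinatorial proof becomes awkward, the finite classification of affine strongly tempered spherical varieties (together with the tables compiled in \cite{KnopSchalke}) reduces the claim to a case-by-case check.

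The main obstacle is the motivic transfer in the first part. Although the identity in \cite{SakWang} is a clean function-level statement, its proof proceeds through affine Grassmannians and equivariant perverse sheaves and does not live natively in the Denef--Pas framework, so translating the required ingredients into definable families is delicate; the singular strata allowed by \cite{SakWang} are a particular concern. A practical route is to first extract from \emph{loc.~cit.}\ a purely $p$-adic integral identity, verify that each term is a specialization of a motivic function, and only then apply transfer. By contrast, the minuscule part is expected to be a short combinatorial consequence of unpacking the strongly tempered hypothesis, and its verification should not require resolving the motivic transfer question.
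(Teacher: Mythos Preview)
The paper only supplies a proof in the equal-characteristic case, and only of the displayed identity --- the text just before the conjecture explicitly says ``the conjecture minus the final part is known'', so the minuscule clause is left entirely unaddressed. Your proposal therefore aims at more than the paper actually proves.

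For the equal-characteristic identity, your reduction to \cite{SakWang} is correct in spirit, but the paper is more explicit about the mechanism. Theorem 1.1.2 of \cite{SakWang} computes the Radon transform $\pi_!\Phi_0(t)=\int_N\Phi_0(tn)\,dn$, not $\sh{L}\cdot\Phi_0$ directly. The paper then records the elementary compatibility $\pi_!(\sh{L}\cdot\phi)=\hat{\sh{L}}\ast\pi_!\phi$ (convolution on $A$, i.e.\ multiplication in $\C[[\Lambda]]$) and recovers the values on $X$ from the Radon transform via the theory of asymptotics, just as in the proof of Corollary 1.2.1 of \cite{SakWang}. This is a two-line derivation, and your one-sentence invocation of ``the main theorem of \cite{SakWang}'' is effectively the same content, only less unpacked.

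Your plans for the mixed-characteristic transfer and for the minuscule statement go beyond anything the paper attempts. The motivic-integration route is precisely what the paper points to in \S\ref{1.4.1} as future work, and your discussion of the difficulties (the sheaf-theoretic nature of the proof in \cite{SakWang}, the need to rephrase things in a definable framework) is apt. For the minuscule clause the paper gives no argument whatsoever, so your combinatorial plan is neither confirmed nor contradicted by it; just be aware that the paper does not assert this part is known even in equal characteristic.
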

\begin{proof}[Proof in the equal-characteristic case]
    All references in this proof are to \cite{SakWang}. Define the Radon transform $\pi_!$ by the expression
    \[
        \pi_!\phi(t)=\int_{N}\phi(tn)\,dn.
    \]
    By unwinding definitions, it is easy to check that $\pi_!(\sh{L}\cdot\phi)=\hat{\sh{L}}\ast\pi_!\phi$, where the convolution takes place in the space of smooth (not necessarily compactly supported) functions on $\mb{A}$. In terms of the ring $\C[[\Lambda]]$, this convolution is just usual multiplication. Since $\pi_!\Phi_0$ is computed in Theorem 1.1.2, the conjecture follows by applying the theory of asymptotics, exactly as in the proof of Corollary 1.2.1.
\end{proof}

\subsubsection{Integral structure}
Let the assumptions be as in the previous subsection. We give an \emph{ad hoc} definition of an integral structure on $\C[\delta_{(X)}^{\frac{1}{2}}A_X^*]$, whose only purpose is to specify when a half power of $q$ can show up. This is abstractly explained by the analytic shearing construction in \cite[\S 6.8]{BZSV}. Let $\Z_{\{q\}}=\Z[q^{-1}]$.

\begin{definition}\label{defn:Shear}
  Let $\rho$ (resp.~$\rho_{(X)}$, $\rho_{P(X)}$) be the half sum of positive roots for $\mb{G}$ (resp.~$\mb{L}(\mb{X})$, $\mb{P}(\mb{X})$). Define $\Z_{\{q\}}[[A^*]]^{\shear}$ to be the functions in $\C[[A^*]]$ of the form
  \[
    \sum_{\check{\lambda}\in\Lambda_X}a_{\check{\lambda}}q^{\bra\check\lambda,\rho\ket}e^{\check{\lambda}}
  \]
  where $a_{\check{\lambda}}\in\Z_{\{q\}}$. We are again identifying $\C[[A^*]]$ with $\C[[\Lambda]]$ by \eqref{eqn:PowerSeries}.

  The ring $\Z_{\{q\}}[[\delta_{(X)}^{\frac{1}{2}}A_X^*]]^{\shear}$ consists of restrictions of functions in $\Z[[A^*]]^{\shear}$. 
\end{definition}

\begin{lemma}\label{lem:Integral}
    Let $\sh{L}$ be a Hecke operator such that $\hat{\sh{L}}_{(X)}\in\Z_{\{q\}}[\delta_{(X)}^{\frac{1}{2}}A_X^*]^{\shear}$, in the notation of \eqref{eqn:ShiftedHecke}. Let $\phi=\sh{L}\cdot\Phi_0$, then $\phi(x)\in\Z[q^{-1}]$ for all $x\in X$.
\end{lemma}
\begin{proof}
  Under the various hypotheses discussed in the previous section, the second term in Theorem~\ref{thm:InverseSatake} and Conjecture~\ref{conj:InverseSatake} lies in the ring $\Z_{\{q\}}[[\delta^{\frac{1}{2}}_{(X)}A_X^*]]^\shear$. It follows that
  \begin{align*}
    \phi(x_{\check{\lambda}})&\in\delta_{P(X)}^{-\frac{1}{2}}(x_{\check\lambda})\delta_{(X)}^{\frac{1}{2}}(x_{\check\lambda})q^{\bra\check{\lambda},\rho\ket}\Z_{\{q\}}\\
    &=q^{\bra\check\lambda,-\rho_{P(X)}+\rho_{(X)}+\rho\ket}\Z_{\{q\}}
  \end{align*}
  Since $\rho=\rho_{P(X)}+\rho_{(X)}$ and $2\rho_{P(X)}$ is integral, this is an integral power of $q$.
\end{proof} 

\subsection{Divisibility properties}
For Euler system applications, we are interested in proving that for certain $x$, the values $\phi(x)$ are in fact divisible by $q-1$. By the previous calculations, the values $\phi(x)$ are polynomials in $\Z[q^{\pm 1}]$. Now treat $q$ as a formal variable. To prove divisibility by $q-1$, it remains to specialize at $q=1$ and prove that the value of this polynomial is 0. We will do this by an anti-symmetry argument.

\subsubsection{Strongly tempered case}
Let $X$ be a strongly tempered spherical variety. In \cite[\S 4.3, 4.4]{BZSV}, the authors attached a symplectic $\check{G}$-representation to $X$
\[
  S_X=S_X^\circ\oplus S_X^\bullet,\quad S_X^\bullet\simeq(S_X^\circ)^\ast
\]
By \cite[Proposition 9.3.3]{BZSV}, the multiset of weights of $S_X$ exactly agrees with $\Theta_X:=\Theta_X^+\sqcup(-\Theta_X^+)$ under the assumptions of Theorem~\ref{thm:InverseSatake} or Conjecture~\ref{conj:InverseSatake}. While we are citing general results, in each individual case, the fact that $\Theta$ is exactly the multiset of weights of a symplectic representation can be checked explicitly. Moreover, the assumptions also imply that the weights appearing in $S_X$ are all minuscule.

According to the polarization, we have another decomposition $\Theta=\Theta^\circ\sqcup\Theta^\bullet$. Let $\Theta^{\circ,+}=\Theta^\circ\cap\Theta^+$. This is the multiset of positive weights occurring in $S_X^\circ$, where positive means lying in the cone $\sh{C}_X$ defined in Remark~\ref{rmk:Cone}. Note that we again have $\Theta^\circ=\Theta^{\circ,+}\sqcup(-\Theta^{\circ,+})$.

Let
\[
  \hat{\sh{L}}=\prod_{\check{\theta}\in\Theta_X^\circ}(1-q^{-\frac{1}{2}}e^{\check{\theta}})\in\C[\Lambda]
\]
By construction, this is $W$-invariant, so it is the image of a Hecke operator $\sh{L}\in\sh{H}(G)$. Since $\Theta_X$ consists of minuscule weights, this lies in the integral ring $\Z_{\{q\}}[A^*]^\shear$. If $S_X^\circ$ is irreducible, then $\sh{L}$ is exactly the Hecke polynomial attached to the representation $S_X^\circ$ evaluated at $q^{-\frac{1}{2}}$.
\begin{prop}\label{prop:DivisibleT}
  Let $\sh{L}$ be the Hecke operator defined above, and let $\phi=\sh{L}\cdot\Phi_0$. If $x$ lies on a wall of type T, then $\phi(x)\in (q-1)\Z[q^{-1}]$.
\end{prop}
\begin{proof}
  By the inverse Satake transform, the value $\phi(x_{\check\lambda})|_{q=1}$ is the coefficient of $e^{\check\lambda}$ in the power series expansion of
  \[
    \hat{\sh{L}}|_{q=1}\cdot\frac{\mathtt{ad}}{\mathtt{L}},\quad\text{where }\mathtt{ad}=\prod_{\check\gamma\in\check\Phi_X^+}(1-e^{\check\gamma}),\quad\mathtt{L}=\prod_{\check\theta\in\Theta_X^+}(1-e^{\check\theta}).
  \]
  For our choice of $\sh{L}$, this expression simplifies to
  \begin{align*}
    \mathtt{ad}\cdot\frac{\prod_{\check{\theta}\in\Theta_X^\circ}(1-e^{\check{\theta}})}{\prod_{\check\theta\in\Theta_X^+}(1-e^{\check\theta})}&=\mathtt{ad}\cdot\prod_{\check{\theta}\in\Theta_X^{\circ,+}}\frac{1-e^{-\check{\theta}}}{1-e^{\check{\theta}}}\\
    &=\mathtt{ad}\cdot\prod_{\check{\theta}\in\Theta_X^{\circ,+}}(-e^{-\check{\theta}})
  \end{align*}
  This is a \emph{polynomial} in $\C[\Lambda_X]$, so the subtlety explained in Remark~\ref{rmk:Cone} no longer applies. Denote the second term above by $\mathtt{L}'$.
  
  Suppose $\gamma$ is a spherical root of type T, with corresponding coroot $\check\gamma$. Let $w_\gamma\in W_X$ be its associated simple reflection. We consider the behaviour of the above expression under $w_\gamma$. There are two terms:
  \begin{enumerate}
    \item Since $\Phi_X^+$ is the positive part of a root system, we see that
    \[
      \frac{w_\gamma\mathtt{ad}}{\mathtt{ad}}=\frac{1-e^{-\check\gamma}}{1-e^{\check\gamma}}=-e^{-\check\gamma}
    \]
    \item We need to understand the difference between $w_\gamma\Theta_X^+$ and $\Theta_X^+$. In the homogeneous case, this is described in \cite[Statement 7.1.5]{SakSpherical}. Since $\gamma$ has type T, there are two virtual colors $D$ and $D'$ belonging to $\gamma$. Their associated valuations $v_D$ and $v_{D'}$ sum to $\check{\gamma}$, so
    \[
      \frac{w_\gamma\mathtt{L}'}{\mathtt{L}'}=e^{-(v_D+v_{D'})}=e^{-\check{\gamma}}
    \]
    In general, the above discussion still holds due to our minuscule hypothesis combined with \cite[Theorem 7.1.9(iii)]{SakWang}.
  \end{enumerate}
  It follows that
  \begin{equation}\label{eqn:sign1}
    w_\gamma\left(\hat{\phi}|_{q=1}\cdot\frac{\mathtt{ad}}{\mathtt{L}}\right)=-\hat{\phi}|_{q=1}\cdot\frac{\mathtt{ad}}{\mathtt{L}}
  \end{equation}
  If $w_\gamma\check{\lambda}=\check{\lambda}$, or equivalently if $\check{\lambda}$ lies on the wall defined by $\gamma$, then we see that the coefficient of $e^{\check{\lambda}}$ must be 0, as required.
\end{proof}

\begin{remark}\label{rmk:TypeG}
  Performing the above computation for a spherical root of type G recovers equation~\eqref{eqn:sign1}, except with a plus sign. Therefore, we gain no information. In particular, all roots are of type G in the group case $\bH\backslash\bH\times\bH$, so we do not see any automatic divisibility in the classical Satake isomorphism.
\end{remark}

\begin{remark}
  It is important that we reduced to a polynomial first before considering the action of the Weyl group, since a reflection $w_\gamma$ changes the cone $\sh{C}_X$ that we are expanding along, and there is no \emph{a priori} relation between the coefficients of the two expansions. Note that this cancellation happened exactly because of our choice of $\sh{L}$. This fits nicely with the optimality results of A.~Groutides, cf.~\cite{GroutidesRS}.
\end{remark}

\subsubsection{Non-strongly tempered case --- an example}\label{ss:NonST}
In the general case, the structure of the denominator cancellation is more subtle due to the extra term in \cite[(4.16)]{BZSV}. Since we only have one application in mind which is not strongly tempered, namely the Friedberg--Jacquet setting, we will write down everything explicitly and observe the cancellation ``by hand''. This is also an opportunity to give an example.

In this subsection, we specialize to the following setting:
\[
  \bH=\GL_n\times\GL_n\hookrightarrow \bG=\GL_{2n}\times\G_m,\quad (h_1,h_2)\mapsto\Big(\diag(h_1,h_2), \frac{\det h_1}{\det h_2}\Big)\]
The dual group in this case is $\check{G}_X=\mathrm{Sp}_{2n}\times\G_m$, so it is not strongly tempered. Note that $\bG$ contains the torus factor, so in later sections it will be denoted by $\widetilde{\bG}$. We avoid the tilde here to simplify notations.

In a suitable coordinate, we have
\[
  \check{\Phi}_X^+=\{\epsilon_i\pm\epsilon_j\,|\,1\leq i<j\leq n\}\cup\{2\epsilon_i\,|\,1\leq i\leq n\}
\]
The spherical coroots consist of $\{\epsilon_i-\epsilon_{i+1}\,|\,1\leq i<n\}$ and $2\epsilon_n$. Only the final one is of type T.

Following the recipe of \cite[\S 7.1]{SakSpherical}, one can compute that
\[
  \Theta_X^+=\{(\epsilon_i\pm\epsilon_j,2)\,|\,1\leq i<j\leq n\}\cup\{(\epsilon_i\pm s,1)\,|\,1\leq i\leq n\},
\]
where the notation $s$ for the cocharacter on $\G_m$ is chosen in reference to its ultimate role as the $s$-variable in the $L$-function. In fact, in this interpretation, the first set corresponds to $L(1,\wedge^2,\pi)$, and the second term corresponds to the product $L(\frac{1}{2}+s,\pi)L(\frac{1}{2}-s,\pi)$.

In the above coordinates, the Hecke operator giving rise to the standard degree $2n$ $L$-function has the Satake transform
\[
  \hat{\sh{L}}=\prod_{i=1}^n\Big(1-q^{-\frac{1}{2}}e^{-\epsilon_i+s}\Big)\Big(1-q^{-\frac{1}{2}}e^{\epsilon_i+s}\Big)
\]
We now prove exactly the same statement as Proposition~\ref{prop:DivisibleT}.
\begin{prop}\label{prop:DivisibleFJ}
  Let $\sh{L}$ be the Hecke operator defined above, and let $\phi=\sh{L}\cdot\Phi_0$. If $x$ lies on a wall of type T, then $\phi(x)\in (q-1)\Z[q^{-1}]$.
\end{prop}
\begin{proof}
  As before, the value $\phi(x_{\check{\lambda}})|_{q=1}$ is given by the coefficient of $e^{\check{\lambda}}$ in the power series expansion of the expression
  \[
    \hat{\sh{L}}|_{q=1}\cdot\frac{\prod_{\check{\gamma}\in\check{\Phi}_X^+}(1-e^{\check{\gamma}})}{\prod_{\check{\theta}\in\Theta_X^+}(1-e^{\check{\theta}})}.
  \]
  By comparing the above descriptions of the multisets $\check{\Phi}_X^+$ and $\Theta_X^+$, we see that most terms of the fraction cancels, and it simplifies to
  \begin{align*}
    \prod_{i=1}^n(1-e^{-\epsilon_i+s})(1-e^{\epsilon_i+s})\cdot\prod_{i=1}^n\frac{1-e^{2\epsilon_i}}{(1-e^{\epsilon_i+s})(1-e^{\epsilon_i-s})}&=\prod_{i=1}^n (1-e^{2\epsilon_i})\cdot\frac{1-e^{-\epsilon_i+s}}{1-e^{\epsilon_i-s}}\\
    &=\prod_{i=1}^n(1-e^{2\epsilon_i})(-e^{-\epsilon_i+s})
  \end{align*}
  The only root of type T corresponds to the reflection $\epsilon_n\mapsto -\epsilon_n$. This clearly multiplies the final expression by $-1$. The proposition follows.
\end{proof}

\subsubsection{Non-split case --- an example}\label{ss:NonSplit}
When we study the twisted Friedberg--Jacquet setting, we will also encounter a case where $\bG$ is not split over $F$. This is not covered by \cite{SakSpherical}, but the necessary inversion formula can be computed using the same method(See the Appendix for details). The result strongly resembles Theorem~\ref{thm:InverseSatake}, and we will define the combinatorial data appearing in it by analogy.

Let $E/F$ be the unramified quadratic extension with associated quadratic character $\eta$ and non-trivial involution $\bar{\cdot}$. Let $w\in \GL_n(E)$ be the anti-diagonal matrix with entities $1$. Define
\[
    \U_{2n}=\{g\in \GL_{2n}(E)| \bar{g}^T\begin{pmatrix}0 & w \\ w & 0\end{pmatrix}g= \begin{pmatrix}0 & w \\ w & 0\end{pmatrix}\}.
\]
This is the quasi-split unitary group in $2n$ variables. Consider the following embedding
\[
    \mb{H}=\Res_{E/F}\GL_n\hookrightarrow\bG =\U_{2n}\times \G_m \quad h\mapsto (\diag(h, w \bar{h}^{T}w),\mathrm{Norm}_F^E\det h)
\]
As in the previous subsection, the group $\bG$ here will play the role of $\widetilde{\bG}$ in later sections. The group $\mb{G}$ is not split, and we recall that in this case, the Satake isomorphism is
\[
    C_c^\infty(K\backslash G/K,\C)\cong \C[q^{\pm 2 z_1},\cdots, q^{\pm 2z_n},q^{\pm s}]^W,
\]
where $W$ is the Weyl group for the root system of type $\mathrm{C}_n$.

Set $\mb{X}:=\bG/\mb{H}$. This is a form of the variety $\mb{X}$ from the previous subsection. Let
\[
    \Lambda_X^+=\{(\lambda_1,\cdots,\lambda_n)\in\Z^n\,|\,\lambda_1\geq\cdots\geq\lambda_n\}.
\]
Then we have the generalized Cartan decomposition
\[
    X=\bigsqcup_{\check{\lambda}\in\Lambda_X^+}x_{\check{\lambda}}K,
\]
which is analogous to Proposition~\ref{prop:GeneralCartan}. We define the spherical root system to have positive coroots
\[
  \check{\Phi}_X^+=\{\epsilon_i\pm\epsilon_j\,|\,1\leq i<j\leq n\}\cup\{2\epsilon_i\,|\,1\leq i\leq n\}.
\]
It is a root system of $\mathrm{C}_n$. Among the (simple) spherical roots, we call $2\epsilon_n$ the only root of type T. This is still in agreement with the previous subsection when base changed to $E$.

The analogue of $\Theta_X^+$ now contains the additional data of signs $\sigma_{\check{\theta}}\in\{\pm 1\}$. This was also present in \cite{SakSpherical} to account for non-split $\mb{H}$. Define
\[
  \Theta_X^+=\{(\epsilon_i\pm\epsilon_j,-,2)\,|\,1\leq i<j\leq n\}\cup\{(\epsilon_i\pm 2s,-,2)\,|\,1\leq i\leq n\}.
\]
Then the analogue of Theorems~\ref{thm:RelSatake} and \ref{thm:InverseSatake} in our case is the following result, whose proof will be given in the appendix (Theorem \ref{RSUFJ}). 

\begin{theorem}\label{thm:Satake non-split FJ}
\begin{enumerate}
    \item There exists an isomorphism of $C_c^\infty(K\backslash G/K,\C)$-modules 
\[
    C_c^\infty(X,\C)^K\cong \C[q^{\pm z_1},\cdots, q^{\pm z_n},q^{\pm s}]^W
\]
which is equivariant with respect to $C_c^\infty(K\backslash G/K,\C)$ acting by convolution on the left and by multiplication via the Satake isomorphism on the right.
    \item The isomorphism sends the basic function $\indf[\mb{X}(\Or)]$ to the constant function 1.
    \item There is an inversion formula
  \[
        \delta_{P(X)}^{\frac{1}{2}}\cdot(\sh{L}\cdot\Phi_0) = \hat{\sh{L}}\cdot\frac{\prod_{\check{\gamma}\in\check{\Phi}_X^+}(1-e^{\check{\gamma}})}{\prod_{(\check{\theta},r_{\check{\theta}}, d_{\check\theta})\in\Theta_X^+}(1-r_{\check{\theta}}q^{-\frac{1}{2}d_{\check{\theta}}}e^{\check{\theta}})}\bigg|_{\Lambda_X^+}
    \]
with \[\delta_{P(X)}^{1/2}(x_\lambda)=
    (-1)^{\sum_i \lambda_i(n-i+1)}q^{\sum_i \lambda_i(n-i+1/2)}\]
\end{enumerate}
\end{theorem}

The Hecke operator giving rise to the degree $4n$ $L$-function has the Satake transform
\begin{align*}
  \hat{\sh{L}}_{\mathrm{BC}}&=\prod_{i=1}^n\Big(1-qe^{-2\epsilon_i+2s}\Big)\Big(1-qe^{2\epsilon_i+2s}\Big)\\
  &=\prod_{i=1}^n\Big(1-q^{1/2}e^{-\epsilon_i+s}\Big)\Big(1-q^{-\frac{1}{2}}e^{\epsilon_i+s}\Big)\prod_{i=1}^n\Big(1+q^{1/2}e^{-\epsilon_i+s}\Big)\Big(1+q^{-\frac{1}{2}}e^{\epsilon_i+s}\Big)
\end{align*}
According to this decomposition, write $\hat{\sh{L}}_{\mathrm{BC}}=\hat{\sh{L}}_\emptyset\cdot\hat{\sh{L}}_\eta$
As in Proposition \ref{prop:DivisibleFJ}, one finds that  \[
    \hat{\sh{L}}_\eta|_{q=1}\cdot\frac{\prod_{\check{\gamma}\in\check{\Phi}_X^+}(1-e^{\check{\gamma}})}{\prod_{\check{\theta}\in\Theta_X^+}(1-e^{\check{\theta}})}.
  \] simplifies to  \begin{align*}
    \prod_{i=1}^n(1+e^{-\epsilon_i+s})(1+e^{\epsilon_i+s})\cdot\prod_{i=1}^n\frac{1-e^{2\epsilon_i}}{(1+e^{\epsilon_i+s})(1+e^{\epsilon_i-s})}
    =\prod_{i=1}^n(1-e^{2\epsilon_i})(e^{-\epsilon_i+s})
  \end{align*}
  From this, exactly the same anti-symmetry argument from before gives the following result.
  \begin{prop}\label{prop:Divisible non-split FJ}
  Let $\sh{L}$ be the Hecke operator whose Satake transform is $\hat{\sh{L}}_{\mathrm{BC}}$, and let $\phi=\sh{L}\cdot\Phi_0$. If $x$ lies on a wall of type T, then $\phi(x)\in (q-1)\Z[q^{-1}]$.
\end{prop}
\subsection{Abstract Euler system I}
We now combine the results from the previous subsections together to produce an abstract tame norm relation.

\begin{definition}\label{def:Hecke}
  In the set-up of Propositions~\ref{prop:DivisibleT}, \ref{prop:DivisibleFJ}, or \ref{prop:Divisible non-split FJ}, we say that $\sh{L}$ is \emph{the Hecke operator attached to} $\mb{X}$. Recall that if $\mb{X}$ is strongly tempered, then $\sh{L}$ is the denominator of the unramified Plancherel formula for $\mb{X}$.
\end{definition}

In the rest of this article, we tacitly assume that when this terminology is applied, the relevant pair $(\mb{G},\mb{X})$ satisfies the conditions in one of the propositions listed. This definition is rather \emph{ad hoc} beyond the strongly tempered case. One candidate for the general definition is the piece corresponding to $S_X$ in the notation of \cite[\S 9.1]{BZSV}, but we will not pursue this further in this paper.

%
%

\begin{prop}\label{prop:MainCorollary}
  Let $\tilde{\mb{X}}\to\mb{X}$ be a combinatorially trivial $\G_m$-bundle. Let $\sh{L}$ be the Hecke operator attached to $\tilde{\mb{X}}$, then there exists a function
  \[
    \Phi_1\in C_c^\infty(\tX,\Z[q^{-1}])^{K^1}
  \]
  such that
  \[
    \Tr_{K^0}^{K^1}\Phi_1=\sh{L}\cdot\indf[\widetilde{\mb{X}}(\Or)],
  \]
  where $\Tr_{K^0}^{K^1}:=\sum_{g\in K^0/K_1}g$ is the trace operator.
\end{prop}
\begin{proof}
  Let $\phi\in C_c^\infty(\tX,\C)^{K^1}$. Let $x\in X$. Write $xK^0=\bigsqcup_{i\in I}x_i K^1$, then
  \[
    (\Tr_{K^0}^{K^1}\phi)(x)=\sum_{i\in I}\frac{[K^0:K^1]}{[xK^0:x_iK^1]}\phi(x_i).
  \]
  Since $K^1$ is normal in $K^0$, the multiplier in front of each term is the same. In other words, the image of $C_c^\infty(\tX,\Z[q^{-1}])^{K^1}$ under the trace map is characterized by the divisibility condition
  \[
    \phi(x)\in\frac{q-1}{[xK^0:xK^1]}\Z[q^{-1}]\text{ for every }x
  \]
  By Proposition~\ref{prop:Volume} (applied after a quadratic extension if $\mb{G}$ is not split), if this coefficient is not 1, then $x$ lies on a wall of type T. The three propositions cited in the statement applied to $\widetilde{\mb{X}}$ shows that this divisibility requirement is satisfied.
\end{proof}

\begin{remark}\label{rmk:1}
\begin{enumerate}
    \item In all cases, the Hecke operator defined gives rise to the $L$-value attached to the spherical variety $\mathbf{X}$, in the sense of \cite[Definition 7.2.3]{SakSpherical}. This lends credence to the idea the Euler systems, including the tame norm relations, should the ``arithmetic shape of zeta functions'' \cite{KatoEulerSystem}. It also suggests further subtleties in non-wavefront cases, such as the one investigated by Cornut \cite{CornutES}.
    \item For any Hecke operator $\sh{H}\in C_c^\infty(K\backslash G/K,\Z[q^{-1}])$, the result of this proposition holds for the convolution $\sh{L}\ast\sh{H}$. This is easily verified directly on the automorphic side, or one can work on the spectral side and observe that the same ingredients, namely denominator cancellation and Weyl symmetry, still hold. This fits with the results of \cite{GroutidesRS}.
\end{enumerate}
\end{remark}

\section{Construction of Euler systems}
Now let $F$ be a global field. The goal of this section is to apply the above theorems and conjectures to construct the tame part of Euler systems over $F$.

\subsection{Motivic theta series}
Let $\A$ be the ring of ad\`{e}les of $F$. Let $p$ be a rational prime. Let $S$ be a finite set of places of $F$ such that
\begin{enumerate}
  \item $S$ contains all places above $p$ and $\infty$.
  \item Away from $S$, the assumptions made in \S\ref{ss:LocalAssumption} hold \cite[Proposition 2.3.5]{SakANT}.
\end{enumerate}
In particular, we may extend $\mb{X}$ and $\mb{G}$ to smooth varieties over $\spec\Or_{F,S}$, which allows us to talk about their ad\`{e}lic points. Again denote these extensions by the same letters. 

Let $K^S=\prod_{v\notin S}\mb{G}(\Or_{F_v})$. Define the \emph{basic vector} to be
\[
  \Phi_0:=\prod_{v\notin S}\indf[\mb{X}(\Or_{F_v})]\in C_c^\infty(\mb{X}(\A^{S}),\Z_p)
\]
By construction, it is $K^S$-invariant.

\begin{definition}\label{defn:MotivicTheta}
  Let $\sh{M}$ be a $\Z_p$-module with a smooth $\bG(\A^S)$-action. An $\mb{X}$-\emph{theta series} for $\sh{M}$ is a $\bG(\A^S)$-equivariant map
  \[
    \Theta:C_c^\infty(\mb{X}(\A^S),\Z_p)\to\sh{M}
  \]
  The basic element $z\in\sh{M}$ is defined to be $\Theta(\Phi_0)$.
\end{definition}
\begin{remark}\label{rmk:FunctionSpace}
    In general, the discussions of \cite[\S 3]{SakANT} suggests that the correct function space should be a restricted tensor product of smooth functions on $\mb{X}^\bullet$ with bounded growth near the boundary $\mb{X}-\mb{X}^\bullet$. However, this subspace is sufficient for our applications.
\end{remark}

If $\sh{M}$ is a space of automorphic functions, then the classical theta series is an example of this definition. In the relative Langlands program, such objects are expected to play an important role, cf.~\cite[\S 15.1]{BZSV}. Their motivic analogues should also exist in great generality, though they are harder to construct. We will later specify $\sh{M}$ to be certain $\Z_p$-coefficient continuous \'etale cohomology groups, which we consider as $p$-adic realizations of motives.

\begin{example}\label{ex:SiegelUnits}
    Consider the case $\mb{G}=\GL_2$, $F=\Q$, and $\mb{X}=\mathtt{std}$, the 2-dimensional affine space with the standard action of $\mb{G}$. The theory of Siegel units (cf.~\cite[\S 1.4]{KatoEulerSystem}, \cite[Th\'eor\'eme 1.8]{Colmezp-BSD}) gives a motivic theta series 
    \[
        C_c^\infty(\mb{X}(\A^S),\Z)\to\varinjlim_U\Or(Y(U))^\times,
    \]
    where $U$ runs over all open compact subgroups of $\mb{G}(\A^\infty)$, and $Y(U)$ is the open modular curve of level $U$. By taking $p$-adic \'etale realization, this produces a motivic theta series valued in $\h^1_\cont(Y,\Z_p(1))$. This is exactly the construction of $\GL_2$-Eisenstein classes in weight 2. For higher weights, the coefficient $\Z_p$ is replaced by an integral \'etale local system, cf.~\cite[Proposition 7.2.4]{LSZ}.

    We are hiding one subtlety in this definition. In the above references, one need to restrict to functions which vanish at $(0,0)$ to ensure convergence. To get this vanishing automatically, we fix an auxiliary place $v\in S$ and fix the test vector there to vanish at $(0,0)$. This was used in the choices of \cite[\S 8.4.4]{LSZ}. 
\end{example}

\begin{example}
    By taking cup product, we obtain the following motivic theta series for $\mb{X}=\mathtt{std}^2$ with the diagonal $\GL_2$-action
    \[
        C_c^\infty(\mb{X}(\A^S),\Z)\to\h^2_\cont(Y,\Z_p(2)).
    \]
    This is Kato's construction of his zeta element, reformulated in \cite{Colmezp-BSD}. As observed by Colmez, this makes verifying their norm relations ``almost automatic'', since it reduces to some elementary manipulation of indicator functions on $\mb{X}$.
\end{example}

\subsection{Abstract Euler system II}
Let $\mb{T}$ be a 1-dimensional torus defined over $\Or_{F,S}$, the $S$-integers of $F$. Let $\widetilde{\mb{X}}\to\mb{X}$ be a combinatorially trivial $\mb{T}$-bundle. Let $\widetilde{\bG}=\bG\times\mb{T}$ be the augmented group. For each square-free ideal $\m$ not divisible by any place in $S$, we may form the level structure
\begin{equation}\label{eqn:J[m]}
  J[\m]:=\prod_{\substack{v\nmid\m\\v\notin S}}\mb{T}(\Or_{F_v})\prod_{v|\m}\mb{T}(\Or_{F_v})^1\subseteq\mb{T}(\A^S),\quad K[\m]:=K^S\times J[\m]\subseteq\widetilde{\bG}(\A^S),
\end{equation}
where $\mb{T}(\Or_{F_v})^1$ consists of the elements of $\mb{T}(\Or_{F_v})$ whose reduction is the identity.

Let $\mathscr{L}$ be the set of places of $F$ away from $S$ where the hypotheses of Section~\ref{sec:Spherical} are satisfied for $\widetilde{\mb{X}}$. This in particular requires $\mb{T}$ to be split at those places. For each $\ell\in\mathscr{L}$, let $\sh{H}_\ell$ be the Hecke operator attached to $\mb{X}$ (Definition~\ref{def:Hecke}). They give rise to the following abstract Euler system.

\begin{theorem}\label{thm:Abstract}
  Let $\Theta$ be an $\widetilde{\mb{X}}$-theta element for $\sh{M}$. Let $\mathscr{R}$ be the collection of square-free products of places in $\mathscr{L}$. There exists a collection of elements $\{z_\m\in\sh{M}^{K[\m]}\,|\,\m\in\mathscr{R}\}$ such that $z_1$ is the basic element, and for any $\m\in\mathscr{R}$ and any $\ell\in\mathscr{L}$ such that $\ell\nmid\m$, we have the norm relation
  \[
    \Tr_{K[\m]}^{K[\m\ell]}z_{\m\ell}=\sh{H}_\ell\cdot z_\m
  \]
\end{theorem}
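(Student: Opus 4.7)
The plan is to build each $z_\m$ as the image under $\Theta$ of an explicit pure-tensor test vector in $C_c^\infty(\widetilde{\mb{X}}(\A^S),\Z_p)$, and to deduce the global norm relation from a corresponding identity of local test vectors at the prime $\ell$. All of the real content has already been absorbed into Proposition~\ref{prop:MainCorollary}; the remaining argument is purely formal.

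First I would apply Proposition~\ref{prop:MainCorollary} at each $\ell\in\mathscr{L}$ to the $\widetilde{\mb{X}}$-integral Hecke operator $\sh{H}_\ell$, obtaining a function $\Phi_\ell\in C_c^\infty(\widetilde{X}_\ell,\Z[\ell^{-1}])^{K^1_\ell}$ satisfying
\[
    \Tr_{K^0_\ell}^{K^1_\ell}\Phi_\ell = \sh{H}_\ell\cdot\indf[\widetilde{\mb{X}}(\Or_{F_\ell})].
\]
Since $S$ contains all primes above $p$ and $\ell\notin S$, we have $\ell\neq p$, so $\Z[\ell^{-1}]\hookrightarrow\Z_p$ and $\Phi_\ell$ may be regarded as $\Z_p$-valued.

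Next, for $\m\in\mathscr{R}$ I would assemble the global pure-tensor test vector
\[
    \delta_\m := \bigotimes_{v\notin S}\phi_{v,\m},\qquad \phi_{v,\m} := \begin{cases}\Phi_v & \text{if } v\mid\m,\\ \indf[\widetilde{\mb{X}}(\Or_{F_v})] & \text{if } v\nmid\m.\end{cases}
\]
By construction $\delta_\m\in C_c^\infty(\widetilde{\mb{X}}(\A^S),\Z_p)^{K[\m]}$ and $\delta_1$ is the basic vector $\delta$. Set $z_\m:=\Theta(\delta_\m)$; equivariance of $\Theta$ then gives $z_\m\in\sh{M}^{K[\m]}$, and $z_1=\Theta(\delta)$ is the basic element by definition.

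Finally, for $\m\ell\in\mathscr{R}$ with $\ell\nmid\m$, the inclusion $K[\m\ell]\subseteq K[\m]$ is an equality at every place except $\ell$, where it specializes to $K^1_\ell\subseteq K^0_\ell$. Hence coset representatives of $K[\m]/K[\m\ell]$ can be chosen inside $\widetilde{\bG}(F_\ell)$, and the global trace acts as the local trace at $\ell$ on the $\ell$-component of a pure tensor. Since $\delta_\m$ and $\delta_{\m\ell}$ coincide at all places $v\neq\ell$, this gives
\[
    \Tr_{K[\m]}^{K[\m\ell]}\delta_{\m\ell} = \bigl(\Tr_{K^0_\ell}^{K^1_\ell}\Phi_\ell\bigr)\otimes\bigotimes_{v\notin S,\,v\neq\ell}\phi_{v,\m} = \sh{H}_\ell\cdot\delta_\m,
\]
and applying the $\widetilde{\bG}(\A^S)$-equivariant map $\Theta$ yields $\Tr_{K[\m]}^{K[\m\ell]}z_{\m\ell}=\sh{H}_\ell\cdot z_\m$. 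The hard part was Proposition~\ref{prop:MainCorollary}; with it in hand, there is no real obstacle remaining. The only subtleties are the coefficient check ($\ell\neq p$ embeds $\Z[\ell^{-1}]$ into $\Z_p$) and the bookkeeping observation that the global trace factors through the local trace at $\ell$ on a pure tensor.
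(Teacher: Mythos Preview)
Your proof is correct and follows essentially the same approach as the paper's own proof: apply Proposition~\ref{prop:MainCorollary} at each $\ell\in\mathscr{L}$ to obtain the local test vector, replace the $\ell$-component of the basic vector $\delta$ by this for each $\ell\mid\m$, and conclude from the $\widetilde{\bG}(\A^S)$-equivariance of $\Theta$. Your write-up is simply more explicit about the bookkeeping (the coefficient embedding $\Z[\ell^{-1}]\hookrightarrow\Z_p$ and the reduction of the global trace to the local trace at $\ell$), but the strategy is identical.
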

\begin{proof}
  For each $\ell\in\mathscr{L}$, let $\Phi^{(\ell)}$ be the element $\Phi_1$ from Proposition~\ref{prop:MainCorollary} applied to the completion $F_\ell$. Define $\Phi[\m]$ the same way as $\Phi_0$, except the local component at each $\ell|\m$ are replaced by $\Phi^{(\ell)}$. The theorem follows from the $\widetilde{\bG}(\A^\infty)$-equivariance of $\Theta$.
\end{proof}

\subsection{\'Etale classes and Galois action}\label{ss:MotivicES}
We now specialize further and suppose that $\mb{G}$ has Shimura varieties defined over a number field $E$ containing $F$. We can then form Shimura varieties for $\widetilde{\bG}$. Let $\mathbb{L}$ be a $\Z_p$-local system defined using an algebraic representation of $\mb{G}$, as in for example \cite[\S III.2]{HT2001}. Define
\begin{equation}\label{eqn:Motivic}
  \sh{M}=\varinjlim_{K^S}\h^d_\cont(\Sh_{\widetilde{\bG}}(K_SK^S),\mathbb{L}),
\end{equation}
where $K_S$ is a fixed open compact subgroup of $\widetilde{\bG}(\A_S)$, and $K^S$ runs over all open compact subgroups of $\widetilde{\bG}(\A^S)$. The cohomology theory used is Jannsen's continuous \'etale cohomology \cite{JannsenContEt}. Note that all the transition maps are well-defined since we are not changing the level structure at the $p$-adic places.

Suppose $K\subseteq\bG(\A^\infty)$ and $U\subseteq\mb{T}(\A^\infty)$ are both open compact, then we have
\begin{equation}\label{eqn:FieldExtn}
  \Sh_{\widetilde{\bG}}(K\times U)\simeq\Sh_{\bG}(K)\times\Sh_{\mb{T}}(U)
\end{equation}
as algebraic varieties over $E$. Take $U=J[\m]\times J_S$ for some appropriate choice of the ramified level structure $J_S$, then it is often possible to interpret the above product as a base change from $E$ to an extension $E[\m]$. We single out two cases.
\begin{itemize}
  \item If $\mb{T}=\G_m$ with Shimura cocharacter $z^{-1}$, then $E[\m]$ is the cyclotomic extension of conductor $\m$.
  \item If $\mb{T}=\U(1)$ attached to a quadratic extension $E/F$, and the Shimura cocharacter is $z\mapsto \bar{z}/z$, then $E[\m]$ is the anticyclotomic extension of $E$ of conductor $\m$.
\end{itemize}
In these cases, the trace from Theorem~\ref{thm:Abstract} can be identified with a trace map between fields.

Locally at $\ell\in\mathscr{L}$, we have the following description of the Hecke algebra for $\widetilde{\bG}$ as a ring of Laurent polynomials
\begin{equation}\label{eq:HeckePoly}
    \sh{H}(\widetilde{\bG}(F_\ell),\C)\simeq \sh{H}(\bG(F_\ell),\C)[\mathtt{T}^{\pm 1}].
\end{equation}
Here, $\mathtt{T}$ is the indicator function of $\varpi\Or_{F_\ell}^\times\subseteq\mb{T}(F_\ell)$.

Under the identification \eqref{eqn:FieldExtn}, the action of $\mathtt{T}$ matches with the action of the geometric Frobenius element $\mathrm{Frob}_\ell^{-1}$. In the $\mb{T}=\U(1)$ case, this requires further comments: the identification of $\mb{T}_{/F_\ell}$ with $\G_m$ depends on the choice of a place $\lambda$ of $E$ above $\ell$, and $\mathrm{Frob}_\ell$ really means the arithmetic Frobenius at $\lambda$. Using this, Theorem~\ref{thm:Abstract} leads to the following Corollary.
\begin{cor}\label{cor:Abstract}
  Let the setting be as in Theorem~\ref{thm:Abstract}, with $\sh{M}$ given by equation~\eqref{eqn:Motivic}. Let $\sh{H}_V$ be the Hecke operator attached to $\widetilde{\mb{X}}$, viewed as a Hecke polynomial via \eqref{eq:HeckePoly}. Then there exists a collection of elements
  \[
    \big\{z_\m\in\h^d_\cont(\Sh_{\bG/E[\m]},\mathbb{L})\,\big|\,\m\in\mathscr{R}\big\}
  \]
  satisfying the norm relations
  \[
    \Tr_{E[\m]}^{E[\m\ell]}z_{\m\ell}=\sh{H}_V(\mathrm{Frob}_\ell^{-1})\cdot z_\m
  \]
  whenever $\m,\m\ell\in\mathscr{R}$. Here, $\Fr_\ell^{-1}$ acts by its image in $\Gal(E[\m]/E)$.
\end{cor}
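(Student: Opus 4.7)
The plan is to deduce the corollary by applying Theorem~\ref{thm:Abstract} with a carefully chosen Hecke operator and then translating the abstract statement into the language of Shimura varieties via the product decomposition~\eqref{eqn:FieldExtn}.

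First, for each $\ell\in\mathscr{L}$ I would take the Hecke operator at $\ell$ to be $\sh{H}_\ell := \sh{H}_V(\mathtt{T})$, the Hecke polynomial evaluated at the indicator of $\varpi\mb{T}(\Or_{F_\ell})\subseteq\mb{T}(F_\ell)$. By Example~\ref{ex:HeckePolynomial}, every coefficient of $\sh{H}_V(X)$ in $\sh{H}(\mb{G}(F_\ell),\C)$ is $\mb{X}$-integral, so using the isomorphism $\sh{H}(\widetilde{\bG}(F_\ell),\C)\simeq \sh{H}(\bG(F_\ell),\C)[\mathtt{T}^{\pm 1}]$ and the equivalent characterization of $\widetilde{\mb{X}}$-integrality noted just before the corollary, the operator $\sh{H}_\ell$ is $\widetilde{\mb{X}}$-integral. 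Theorem~\ref{thm:Abstract} then produces classes $\tilde{z}_\m\in\sh{M}^{K[\m]}$ satisfying $\Tr_{K[\m]}^{K[\m\ell]}\tilde{z}_{\m\ell}=\sh{H}_V(\mathtt{T})\cdot\tilde{z}_\m$.

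Next, I would use the decomposition $\Sh_{\widetilde{\bG}}(K\times J[\m]\times J_S)\simeq \Sh_{\bG}(K)\times\Sh_{\mb{T}}(J[\m]\times J_S)$ from \eqref{eqn:FieldExtn}. Choosing $J_S$ so that $\Sh_{\mb{T}}(J[\m]\times J_S)\simeq\spec E[\m]$ as $E$-schemes (this is possible in both of the cyclotomic and anticyclotomic cases singled out in \S\ref{ss:MotivicES}), the projection isomorphism together with $K_SK^S$-invariance identifies
\[
    \h^d_\cont\bigl(\Sh_{\widetilde{\bG}}(K\times J[\m]\times J_S),\mathbb{L}\bigr)\iso \h^d_\cont(\Sh_{\bG/E[\m]}(K),\mathbb{L}).
\]
Passing to the limit over $K$ gives a class $z_\m\in\h^d_\cont(\Sh_{\bG/E[\m]},\mathbb{L})$ from $\tilde{z}_\m$, and under this identification the norm map from level $K[\m\ell]$ to level $K[\m]$ is precisely the trace along the field extension $E[\m\ell]/E[\m]$, since changing the $\mb{T}$-level from $J[\m\ell]$ to $J[\m]$ is exactly the covering $\Sh_{\mb{T}}(J[\m\ell]\times J_S)\to\Sh_{\mb{T}}(J[\m]\times J_S)$ of zero-dimensional Shimura varieties.

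The remaining step, and the one I expect to require the most care, is matching the Hecke action of $\mathtt{T}$ on the $\mb{T}$-factor with the action of $\mathrm{Frob}_\ell^{-1}$ on cohomology. On the zero-dimensional Shimura variety $\Sh_{\mb{T}}$, the Hecke operator $\mathtt{T}$ acts via the reciprocity map for the abelian Shimura datum, and the compatibility between this action and the Galois action of the geometric Frobenius is a case of Shimura--Deligne reciprocity for tori. In the $\G_m$ case this is classical; in the $\U(1)$ case one has to fix a place $\lambda$ of $E$ above $\ell$ in order to identify $\mb{T}_{/F_\ell}$ with $\G_m$, and it is with respect to this choice that $\mathrm{Frob}_\ell$ is interpreted as the arithmetic Frobenius at $\lambda$, as flagged in \S\ref{ss:MotivicES}. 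Once this identification is verified, substituting $\mathrm{Frob}_\ell^{-1}$ for $\mathtt{T}$ in the norm relation provided by Theorem~\ref{thm:Abstract} yields the announced relation $\Tr_{E[\m]}^{E[\m\ell]}z_{\m\ell}=\sh{H}_V(\mathrm{Frob}_\ell^{-1})\cdot z_\m$, completing the proof.
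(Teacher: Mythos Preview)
Your proposal is correct and follows essentially the same approach as the paper: the paper does not give a separate proof environment for this corollary but derives it from the discussion immediately preceding it, which amounts to choosing $\sh{H}_\ell=\sh{H}_V(\mathtt{T})$, invoking Example~\ref{ex:HeckePolynomial} for $\widetilde{\mb{X}}$-integrality, applying Theorem~\ref{thm:Abstract}, and then using the product decomposition~\eqref{eqn:FieldExtn} together with the identification of $\mathtt{T}$ with $\mathrm{Frob}_\ell^{-1}$ via the reciprocity law for $\Sh_{\mb{T}}$. Your write-up simply makes these steps more explicit.
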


\begin{remark}
  It is crucial that the Shimura cocharacter on $\mb{T}$ is non-trivial, otherwise we do not recover a Galois action. In particular, choosing the trivial equivariant bundle $\widetilde{\mb{X}}=\mb{X}\times\mb{T}$ does not lead to interesting arithmetic in any of the examples we consider, as expected.
\end{remark}

This result of Corollary~\ref{cor:Abstract} may be called a ``motivic Euler system'', cf.~\cite[\S 9.4]{LSZ-U3}. We now explain the steps required to convert this to an Euler system in Galois cohomology, proving Corollary~\ref{cor:ActualES} from the introduction. These steps are all standard in the literature.
\begin{enumerate}
  \item Null-homologous modification: the main result of \cite{MorelSuh} constructs an element $\mathtt{t}^{\pm}$ in the Hecke algebra of $\mb{G}$ whose action on the cohomology of $\Sh_{\bG}$ are the two sign projectors. In particular, $\mathtt{t}:=\mathtt{t}^{(-1)^{d+1}}$ annihilates the cohomology group $\h^d(\Sh_{\bG/\bar{E}},\mathbb{L})$. Therefore, for each $\m$ as above, $\mathtt{t}\cdot z_\m$ is cohomologically trivial, and the spectral sequence in continuous \'etale cohomology \cite[Remark 3.5(b)]{JannsenContEt} gives an element
  \[
    \tilde{c}_\m\in\h^1(E[\m],\h^{d-1}(\Sh_{\bG/\bar{E}},\mathbb{L}))
  \]
  A simpler argument is given in \cite[Proposition 6.9(1)]{LL2021} which also suffices for our purpose.
  \item Projection to Galois representations: Kottwitz's conjecture gives a concrete description of the cohomology group $\h^{d-1}(\Sh_{\bG/\bar{E}},\mathbb{L}\otimes_{\Z_p}\Q_p)$. In the case considered in Corollary~\ref{cor:ActualES}, it gives a projection from this group to the Galois representation $\rho_\pi$. Moreover, in the cases considered below, the Hecke polynomial specialized to the Satake parameters of $\pi$ agrees with the characteristic polynomial of $\rho_\pi$.
  
  The integral structure given by the lattice $\mathbb{L}$ is mapped to a lattice $T_\pi$ in $\rho_\pi$. The image of $\tilde{c}_\m$ under this projection is the class $c_\m\in\h^1(E[\m],T_\pi)$.
\end{enumerate}

In the following two subsections, we will explain how pushforwards of cycles and Eisenstein classes both give instances of such a motivic theta series. In these settings, the above corollary produces many of the known motivic Euler systems in the literature.

\subsection{Special cycles}\label{ss:Diagonal}
Suppose that $\mb{X}$ is homogeneous, then $\mb{X}=\mb{H}\backslash\bG$, where we recall that $\mb{H}$ is the stabilizer of a point in the open orbit. In this case, $\mb{X}$ is automatically smooth, and it is affine if and only if $\mb{H}$ is reductive. Assume the following conditions.
\begin{enumerate}
  \item Both $\mb{G}$ and $\mb{H}$ have Shimura varieties, and $\dim\Sh_{\bG}=2\dim\Sh_{\mb{H}}+1$.
  \item $\xi$ is an algebraic representation of $\mb{G}_\infty$ whose restriction to $\mb{H}_\infty$ contains the trivial representation.
\end{enumerate}
Under the above assumptions, the discussions of \cite[\S 2.4]{LaiSkinner} carries over verbatim, and Proposition 2.7 of \emph{op.~cit.}~gives a weaker version of the motivic theta series.
\begin{prop}\label{prop:cycle}
  Let $d=\dim\Sh_{\mb{H}}+1$. Let $\mathbb{L}$ be the $\Z_p$-local system on $\Sh_{\bG}$ attached to $\xi$. The diagonal cycle construction defines a map
  \[
    \Theta:C_c^\infty(\mb{H}(\A^{p\infty})\backslash\mb{G}(\A^{p\infty}),\Z_p)\to\h_\cont^{2d}(\Sh_{\bG},\mathbb{L}(d))
  \]
\end{prop}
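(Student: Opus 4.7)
The plan is to follow the diagonal cycle construction of \cite[\S 2.4, Proposition 2.7]{LaiSkinner}, whose argument applies here essentially verbatim once we agree to take the source of $\Theta$ to be $C_c^\infty(\mb{X}(\A^{p\infty}),\Z_p)$ rather than the coinvariant space $C_c^\infty(\bG(\A^{p\infty}),\Z_p)_{\mb{H}(\A^{p\infty})}$ used in earlier references. The integral observation flagged in the introduction is that a level-$K^p$ function on $\mb{X}(\A^{p\infty})$ is naturally a $\Z_p$-linear combination of characteristic functions of $\mb{H}(\A^{p\infty})\backslash\bG(\A^{p\infty})/K^p$ double cosets; one can therefore build $\Theta$ by assigning a single integral Gysin class to each such double coset, with no averaging over $\mb{H}$-orbits ever occurring.

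Concretely, I fix an auxiliary level $K_p$ at the places above $p$ and let $K^p \subseteq \bG(\A^{p\infty})$ vary over open compacts. For each double coset representative $g$, pick an open compact $K_H^g \subseteq \mb{H}(\A^\infty)$ whose prime-to-$p$ part is contained in $gK^pg^{-1} \cap \mb{H}(\A^{p\infty})$ and whose $p$-part is a fixed auxiliary level; right translation by $g$ then yields a finite unramified morphism $\iota_g\colon \Sh_{\mb{H}}(K_H^g) \to \Sh_{\bG}(K_pK^p)$ depending only on the double coset $[g]$. Assumption (2) provides an $\mb{H}(\A_\infty)$-equivariant inclusion $\Z_p \hookrightarrow \xi|_{\mb{H}}$, inducing a morphism of local systems $\Z_p \to \iota_g^*\mathbb{L}$; applying the continuous \'etale Gysin pushforward to the fundamental class of $\Sh_{\mb{H}}(K_H^g)$ composed with this morphism produces an integral class $z_g^{K^p}$ in the target cohomology group. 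I then define $\Theta(\indf[\mb{H}(\A^{p\infty})gK^p]) := z_g^{K^p}$ and extend $\Z_p$-linearly.

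Two routine verifications complete the construction. Compatibility under shrinking $K'^p \subseteq K^p$ reduces to the fact that pulling back $\iota_g$ along the covering $\Sh_{\bG}(K_pK'^p) \to \Sh_{\bG}(K_pK^p)$ decomposes as a disjoint union of the relevant $\iota_{g'}$, combined with the compatibility of Gysin pushforward with trace maps; this lets one pass to the colimit to obtain $\Theta$ on all of $C_c^\infty(\mb{X}(\A^{p\infty}),\Z_p)$. The $\bG(\A^{p\infty})$-equivariance then follows from the standard compatibility of Gysin with Hecke correspondences on $\Sh_{\mb{H}}$ and $\Sh_{\bG}$. The main (though ultimately mild) obstacle is the integrality claim: in the coinvariant formulation $C_c^\infty(\bG,\Z_p)_{\mb{H}}$, a geometric construction of the pushforward can a priori require inverting the orders of finite stabilizers in $\mb{H}$-orbits, but because our source is indexed directly by $\mb{H}$-double cosets and because the continuous \'etale Gysin map preserves $\Z_p$-lattices, no such denominators arise in our formulation.
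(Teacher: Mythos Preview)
Your proposal is correct and follows precisely the approach the paper indicates: the paper's own proof is simply the assertion that the construction of \cite[\S 2.4, Proposition 2.7]{LaiSkinner} carries over verbatim, and what you have written is a faithful expansion of that construction, including the key integrality observation that working with functions on $\mb{X}$ indexed by $\mb{H}\backslash\bG/K^p$ double cosets avoids the volume denominators that would arise from the coinvariant formulation.
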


\begin{remark}
  \begin{enumerate}
    \item The reason this is weaker than a motivic theta series is that for a general ring $R$, it is \emph{not} true that
    \[
      \mb{H}(R)\backslash\mb{G}(R)=\mb{X}(R).
    \]
    This holds in the Gan--Gross--Prasad case below, but it fails for the unitary Friedberg--Jacquet case. This failure is related to the issue of stabilizing the relative trace formula.
    
    However, this is enough for the purpose of constructing Euler systems. In fact, our main local result Proposition~\ref{prop:MainCorollary} gives an abstract tame norm relation for each $\bG(F_\ell)$-orbit of $\mb{X}(F_\ell)$.
    \item The algebraic representation $\xi$ determines the archimedean part of the representations distinguished by $\Theta$. The above construction depends on the choice of an invariant vector in $\xi|_{\mb{H}}$. It is likely that including the archimedean place in the space $\mb{X}(\A^{p\infty})$ would give a more canonical construction.
  \end{enumerate}
\end{remark}

Since we are constructing motivic classes using cycles, we should limit our deformations to the self-dual setting. Therefore, it seems necessary to take $\mb{T}=\U(1)$, defined with respect to a CM extension $E/F$. To satisfy the conditions in \S\ref{sec:Spherical}, we must restrict our attention to split primes. The result is a split anticyclotomic Euler system, in the sense of \cite{JNS}.

We now explain the content of Corollary~\ref{cor:Abstract} in a few cases. In each case, we need to specify the data of an embedding of reductive groups $\iota:\mb{H}\hookrightarrow\mb{G}$ and a character $\nu:\mb{H}\to\U(1)$. The latter is used to specify the $\U(1)$-bundle over $\mb{X}$. In most cases, the local spherical variety appears in the table \cite[Appendix A]{SakSpherical}, so they have been verified to satisfy the combinatorial conditions alluded to in \S\ref{ss:HomogeneousCase}. The exception is the twisted Friedberg--Jacquet case, where we derive analogoues of Sakellaridis's results ourselves.

\subsubsection{Gan--Gross--Prasad \cite{LaiSkinner}}
Let $\mathtt{V}_{n}\subseteq\mathtt{V}_{n+1}$ be Hermitian spaces of dimensions $n$, $n+1$ respectively which are nearly definite. Consider the setting
\begin{align*}
  &\mb{H}=\U(\mathtt{V}_n),\quad\bG=\U(\mathtt{V}_n)\times\U(\mathtt{V}_{n+1})\\
  &\iota:h\mapsto (h,\diag(h,1)),\quad \nu:h\mapsto\det h
\end{align*}
The intersection of $\bH$ with a good Borel $\mb{B}$ is trivial, so the bundle is automatically combinatorially trivial. The Hecke operator attached to $\mb{X}$ corresponds to the standard tensor product representation of degree $n(n+1)$, so Corollary~\ref{cor:Abstract} recovers \cite[Proposition 5.4]{LaiSkinner}, bypassing the explicit matrix calculations in \S 4 of \emph{op.~cit.}

\subsubsection{Friedberg--Jacquet \cite{GrahamShah}}
Let $\mathtt{V}_{2n}$ be a Hermitian space with signature $(1,2n-1)$ at one archimedean place and $(0,2n)$ at the other archimedean places. Let $\mathtt{W}$ be a totally definite subspace of dimension $n$, and let $\mathtt{W}^\perp$ be its dual. Consider the setting
\begin{align*}
  &\mb{H}=\U(\mathtt{W}^\perp)\times\U(\mathtt{W}),\quad\bG=\U(\mathtt{V})\\
  &\iota:(a,b)\mapsto \diag(a,b),\quad \nu:(a,b)\mapsto\frac{\det a}{\det b}
\end{align*}
At a split place, the local picture is $\GL_n\times\GL_n\backslash\GL_{2n}$. For an appropriate choice of the Borel subgroup $\mb{B}$, the intersection $\mb{A}\cap\mb{H}$ has the form
\[
  (\diag(z_1,\cdots,z_n),\diag(z_n,\cdots,z_1)),
\]
so the bundle defined by $\nu$ is combinatorially trivial (which explains why $(a,b)\mapsto\det a\det b$ is not the correct choice for $\nu$). In this case, the equivariant bundle is non-trivial even as a line bundle, which explains why it is essential to include the additional $\U(1)$-factor.

The Hecke operator attached to $\mb{X}$ corresponds to the standard representation of degree $2n$, so Corollary~\ref{cor:Abstract} recovers the tame part of the split anticyclotomic Euler system for symplectic representations constructed by Graham and Shah \cite[Proposition 9.27]{GrahamShah} without assumptions on the prime $p$.

\subsubsection{Twisted Friedberg--Jacquet}
We now consider the global setting considered in \cite{LXZ-TwistedFL}, so we will change notations to align with theirs. To orient the reader, our Euler system will be a split anticyclotomic Euler system defined relative to the CM extension $F'/F_0$ in the notations below.

We place ourselves in the setting considered in \S\ref{ss:TwistedFJ} from the introduction, which we briefly summarize. Recall that $E/F_0$ is a biquadratic extension with the following intermediate subfields
\begin{center}
    \begin{tikzpicture}[scale=0.8]
    \node (F0) at (0,0) {$F_0$};
    \node (E0) at (1.5,1.5) {$E_0$};
    \node (F') at (0,1.5) {$F'$};
    \node (F) at (-1.5,1.5) {$F$};
    \node (E) at (0,3) {$E$};

    \draw (F0) -- (E0) node [pos=0.5, below right] {Real, $\tau$} -- (E);
    \draw (F0) -- (F) node [pos=0.5, below left] {CM, $\sigma$} -- (E);
    \draw (F0) -- (F') -- (E);
    \end{tikzpicture}
\end{center}
Moreover, $(B,\ast)$ is a division algebra over $F$ of dimension $(2n)^2$ with an embedding $E\hookrightarrow B$. They give rise to the algebraic groups
\[
    \bH=\Res_{E_0/F_0}\U(B^E)=\{g\in (B^E)^\times| g^*g=1\},\quad \bG=\U(B)=\{g\in B^\times|g^*g=1\}.
\]
Both groups have Shimura varieties with reflex fields (contained in) $F$ and dimensions
\[
    \dim_F \Sh_{\mb{H}}=n-1,\quad \dim_F \Sh_{\mb{G}}=2n-1.
\]
Let $\mb{T}=\U(1)_{F'/F_0}$. The reduced form defines a combinatorially trivial character $\nu:\bH\to\mb{T}$, and we have the augmented embedding
\[
    \mb{H}\hookrightarrow\widetilde{\mb{G}}=\bG\times\mb{T},\quad h\mapsto(h,\nu(h))
\]
After base changing to the algebraic closure, this and the previous untwisted case agree.

Let $\ell$ be a place of $F_0$ which splits in $F'$. Away from finitely many bad places, there are two cases:
\begin{itemize}
    \item $\ell$ splits in $F$: the local calculation is explained in \S\ref{ss:NonST}.
    \item $\ell$ is inert in $F$: the local calculation is explained in \S\ref{ss:NonSplit}
\end{itemize}
Therefore, we can apply Corollary~\ref{cor:Abstract} to obtain a motivic Euler system in this setting. We will now unwind the relevant definitions and describe the resulting Euler system.

Recall the following automorphic context from \cite{LXZ-TwistedFL}. Let $\pi_0$ be a symplectic automorphic representation of $\GL_{2n}(\A_{F_0})$ whose base change to $F$ is cuspidal. Let $\pi$ be the automorphic representation of $\mb{G}(\A_{F_0})$ such that $\pi_0$ and $\pi$ have the same base change to $F$. There is a factorization
\begin{equation}\label{eq:TwistedAuto}
    L(s,\BC_E(\pi_0))=L(s,\BC_{F'}(\pi_0))L(s,\BC_{F'}(\pi_0\otimes\eta_{F/F_0}))
\end{equation}
Stated imprecisely, \cite[Conjecture 1.1]{LXZ-TwistedFL} relates the non-vanishing of the $\mb{H}$-period for $\pi$ to the non-vanishing of \emph{one} of the two $L$-functions on the right hand side.

Let $\rho_\pi$ be the Galois representation attached to $\pi_0$. The Kottwitz conjecture (cf.~\cite[Conjecture 5.2]{BlasiusRogawski}) predicts that $\rho_\pi|_{\Gal_F}$ appears in the $\pi$-isotypic component of the cohomology of $\Sh_{\bG}$. Let $\chi$ be an anticyclotomic Hecke character of $\A_{F'}^\times$. Following the Abel--Jacobi map procedure described in \S\ref{ss:MotivicES}, our motivic Euler system gives rise to a class in
\[
    \h^1(E,\Res^E_F\rho_\pi\otimes\Res^E_{F'}\chi)=\h^1(F',(\Ind_{F'}^E\Res^E_F\rho_\pi)\otimes\chi),
\]
where the equality is a consequence of Shapiro's lemma and the projection formula. We have a decomposition
\[
    \Ind_{F'}^E\Res^E_F\rho_\pi=\rho_{\pi}|_{\Gal_{F'}}\oplus(\rho_\pi\otimes\eta_{F/F_0})|_{\Gal_{F'}}.
\]
Therefore, we have constructed Selmer classes for a decomposable representation, whose pieces exactly correspond to the factorization \eqref{eq:TwistedAuto} on the automorphic side.

The characteristic polynomial in this case corresponds to the local $L$-factor for $L(s,\BC_E(\pi_0))$. This is exactly the Hecke operator used in the non-split case (Proposition~\ref{prop:Divisible non-split FJ}). In the split case, it is the square of the Hecke operator used in Proposition~\ref{prop:DivisibleFJ}, since $\eta_{F/F_0}$ is trivial. This does not change the main result, as explained in Remark~\ref{rmk:1}. Therefore, Corollary~\ref{cor:Abstract} gives a motivic Euler system, and the discussions thereafter gives Theorem~\ref{thm:TwistedFJ} from the introduction. Note that in this case, Kottwitz's conjecture is proven in \cite{KSZ}.

\begin{cor}
    Let $\pi$ be as above. Suppose Kottwitz's conjecture holds for the $\pi$-isotypic part of the cohomology of $\Sh_\bG$, then there exists a split anticyclotomic Euler system for the decomposable representation
    \[
        (\rho_\pi\oplus\rho_\pi\eta_{F/F_0})|_{\Gal_{F'}}
    \]
    whose base class is the $p$-adic \'{e}tale realization of the special cycle corresponding to the embedding $\Sh_{\bH}\hookrightarrow\Sh_\bG$.
\end{cor}

\subsection{Eisenstein classes}\label{ss:Eisenstein}
We now move to a family of non-homogeneous spherical varieties. Suppose $V$ is an affine space with a spherical action of a group $\mb{H}$. Let
\[
  \mb{X}=\mb{G}\times^{\mb{H}} V
\]
In a change of notation from \S\ref{sec:Spherical}, the stabilizer of a generic point is no longer $\mb{H}$. In the Eisenstein class applications below, the stabilizer is a mirabolic subgroup of $\mb{H}$, which gives an explanation for their prominence in \cite{LoefflerSpherical}.

We expect there to be a general pushforward construction sending motivic theta series for $V$ to ones for $\mb{X}$. If both $\mb{H}$ and $\mb{G}$ have Shimura varieties, then this should just be the usual pushforward construction, as described for example in \cite{LoefflerSpherical}. The following proposition describes that construction in our framework.

\begin{prop}\label{prop:Eisenstein}
    Suppose we have a $V$-theta series
    \[
        \Theta_{\mb{H}}:C_c^\infty(V(\A^{p\infty}),\Z_p)\to\h^i_\cont(\Sh_{\mb{H}},\Z_p(j)),
    \]
    then its pushforward defines an $\mb{X}$-theta series
    \[
        \Theta_{\mb{G}}:C_c^\infty(\mb{X}(\A^{p\infty}),\Z_p)\to\h_\cont^{i+2d}(\Sh_\bG,\Z_p(j+d))/\mathrm{tors},
    \]
    where $d=\dim\Sh_\bG-\dim\Sh_{\mb{H}}$ is the codimension.
\end{prop}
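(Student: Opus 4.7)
The plan is to unwind $\mb{X}=\mb{G}\times^{\mb{H}} V$ as a $V$-bundle over $\mb{H}\backslash\bG$, apply $\Theta_{\mb{H}}$ fiberwise, and push the resulting classes forward along suitable translates of the Shimura immersion $\Sh_{\mb{H}}\hookrightarrow\Sh_{\bG}$. Fix a neat open compact subgroup $K\subseteq\bG(\A^{p\infty})$ and a function $\phi\in C_c^\infty(\mb{X}(\A^{p\infty}),\Z_p)^K$; identify it with an $\mb{H}(\A^{p\infty})$-invariant, compactly supported function on $\bG(\A^{p\infty})\times V(\A^{p\infty})$. Fix representatives $\{g_\alpha\}$ of the finitely many double cosets in $\mb{H}(\A^{p\infty})\backslash\bG(\A^{p\infty})/K$ meeting $\supp\phi$, set $K_{\mb{H},\alpha}:=\mb{H}(\A^{p\infty})\cap g_\alpha K g_\alpha^{-1}$, and extract the fiber function $\phi_\alpha(v):=\phi([g_\alpha,v])\in C_c^\infty(V(\A^{p\infty}),\Z_p)^{K_{\mb{H},\alpha}}$.

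Right translation by $g_\alpha$ composed with the Shimura immersion gives a finite unramified morphism of codimension $d$,
\[
\iota_\alpha:\Sh_{\mb{H}}(K_{\mb{H},\alpha})\to\Sh_\bG(K),
\]
and the hypothesis produces $\Theta_{\mb{H}}(\phi_\alpha)\in\h^i_\cont(\Sh_{\mb{H}}(K_{\mb{H},\alpha}),\Z_p(j))$ whose Gysin pushforward along $\iota_\alpha$ lives in $\h^{i+2d}_\cont(\Sh_\bG(K),\Z_p(j+d))$. I would define $\Theta_\bG(\phi)$ as the sum of these contributions over $\alpha$ and then pass to the direct limit over $K$. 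Independence of the choice of representative $g_\alpha$ within a double coset is immediate: right multiplication by $k\in K$ is killed in $\Sh_\bG(K)$, while left multiplication by $h\in\mb{H}(\A^{p\infty})$ induces a canonical isomorphism $\Sh_{\mb{H}}(K_{\mb{H},\alpha})\iso\Sh_{\mb{H}}(K_{\mb{H},hg_\alpha})$ carrying $\phi_\alpha$ to $\phi_{hg_\alpha}$, which is compatible with $\Theta_{\mb{H}}$ by its $\mb{H}(\A^{p\infty})$-equivariance. Equivariance of $\Theta_\bG$ under right translation by $\bG(\A^{p\infty})$ then follows tautologically from how right multiplication permutes the double coset decomposition.

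The main obstacle, and the reason for quotienting by torsion on the target, is the compatibility of this construction with the transition maps of the direct limit over $K$. When $K'\subseteq K$, each double coset in $\mb{H}(\A^{p\infty})\backslash\bG(\A^{p\infty})/K$ refines into a finite union of cosets for $K'$, and matching the level-$K'$ and level-$K$ definitions of $\Theta_\bG(\phi)$ reduces via proper base change for Gysin pushforward to the change-of-level compatibility of $\Theta_{\mb{H}}$. Rationally this holds on the nose, but integrally the $\varprojlim^1$ terms intrinsic to Jannsen's continuous \'etale cohomology (cf.\ \cite[Remark 3.5(b)]{JannsenContEt}) can produce torsion discrepancies between the two descriptions of the same class in $\varinjlim_K\h^{i+2d}_\cont(\Sh_\bG(K),\Z_p(j+d))$. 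Modding out by torsion absorbs these ambiguities uniformly and yields the desired well-defined $\bG(\A^{p\infty})$-equivariant map. Integrality of $\Theta_\bG(\phi)$ itself is preserved along the way, since $\Theta_{\mb{H}}$ is integral by hypothesis and Gysin pushforward along the finite unramified $\iota_\alpha$ respects the integral structure.
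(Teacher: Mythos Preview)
Your construction is essentially the same pushforward-by-double-cosets that the paper invokes (following \cite[\S 8.2]{LSZ}): apply $\Theta_{\mb{H}}$ fiberwise and Gysin-push along translates of the Shimura immersion. The independence-of-representative and $\bG(\A^{p\infty})$-equivariance checks are fine.

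Where you diverge from the paper is in the explanation of the ``mod torsion''. The paper's argument is purely a shortcut: it first constructs the map with target $\Q_p$-cohomology (where $\mb{H}$-coinvariants and functions on $\mb{X}$ agree), then observes that the volume factors introduced by the coinvariant map $C_c^\infty(\bG\times V,\Z_p)\to C_c^\infty(\mb{X},\Q_p)$ exactly match those appearing in the pushforward, so the composite restricted to $C_c^\infty(\mb{X},\Z_p)$ lands in the image of $\Z_p$-cohomology in $\Q_p$-cohomology, i.e.\ in $\Z_p$-cohomology modulo torsion. There is no $\varprojlim^1$ obstruction in play.

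Your diagnosis---that level-change compatibility fails integrally because of $\varprojlim^1$ in continuous \'etale cohomology---is not the right mechanism. The compatibility you need is proper base change for Gysin pushforward across the (Cartesian) level-change squares, and this holds on the level of complexes, hence integrally; the $\varprojlim^1$ contribution to $\h^*_\cont$ concerns the comparison with $\varprojlim_n\h^*(-,\Z/p^n)$, not functoriality under finite \'etale covers. Indeed the paper remarks immediately after the proposition that the argument of \cite[Proposition~2.7]{LaiSkinner} should remove ``mod torsion'' entirely. So your integral construction is in fact \emph{better} than you give it credit for: carried through as you describe, it should land in $\Z_p$-cohomology on the nose, and the quotient by torsion is unnecessary rather than forced by a genuine obstruction.
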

\begin{proof}
    Exactly as in \cite[\S 8.2]{LSZ}, one may define a map
    \[
        \Theta_{\mb{G}}':C_c^\infty((\mb{G}\times V)(\A^{p\infty}),\Z_p)\to\h_\cont^{i+2d}(\Sh_\bG,\Q_p(j+d))
    \]
    which is left $\bG$-equivariant and right $\mb{H}$-invariant. Therefore, we have a commutative diagram
    \[
        \begin{tikzcd}
        C_c^\infty((\bG\times V)(\A^{p\infty}),\Z_p)\dar\drar["{\Theta_{\mb{G}}}"] & \\
        C_c^\infty(\mb{X}(\A^{p\infty}),\Q_p)\rar["{\Theta_{\mb{G}}'}"] & \h_\cont^{i+2d}(\Sh_\bG,\Q_p(j+d))
        \end{tikzcd}
    \]
    where the downward arrow is the $\mb{H}$-coinvariant map. One simply observes that the coinvariant map and $\Theta_{\bG}'$ both introduce the same volume factors on a basis of functions, so $\Theta_{\mb{G}}$ is integral.
\end{proof}
\begin{remark}
    The proof of \cite[Proposition 2.7]{LaiSkinner} should carry over to this setting, thereby removing the ``mod torsion'' part of the statement. One can likely axiomatize it using Loeffler's definition of Cartesian cohomology functors \cite{LoefflerSpherical, GrahamShah}.
\end{remark}

If $\mb{H}=\GL_2$ and $V$ is its standard two-dimensional representation, then Example~\ref{ex:SiegelUnits} gives a $V$-theta series as above, with $i=j=1$. This gives rise to the tame parts of the following Euler systems, with the caveat that we need the function-level statements of \cite{SakWang} in mixed characteristics. Note that all of the spherical varieties used are strongly tempered.

\subsubsection{Rankin--Selberg \cite{LLZ}}
Let $\bG=\GL_2\times\GL_2$ and $\bH=\GL_2$, embedded diagonally in $\bG$. Let
\[
    \mb{X}=\bG\times^{\bH}\mathtt{std}
\]
The pushforward construction gives rise to Beilinson--Flach elements considered in \cite{LLZ}. To obtain an Euler system, we endow the trivial line bundle over $\mb{X}$ with the $\bG$-action through the character $(g_1,g_2)\mapsto\det g_1$. Our main theorem applied to the associated $\G_m$-bundle recovers the tame norm relations of Beilinson--Flach elements \cite[Theorem 3.4.1]{LLZ}.

\subsubsection{Asai representation \cite{GrossiAF}}
Let $F$ be real quadratic field. In the previous subsection, take instead
\[
    \bG=\Res_{F/\Q}\GL_{2/F},\quad \bH=\GL_2
\]
This gives rise to the Asai--Flach classes considered in \cite{GrossiAF}. The $\G_m$-bundle defined above recovers the tame norm relation at primes $\ell$ which split in $F$.

\subsubsection{$\mathrm{GSp}_4\times\GL_2$ \cite{HJS}}
Let $\bG=\mathrm{GSp}_4\times_{\G_m}\GL_2$ and $\bH=\GL_2\times_{\G_m}\GL_2$. By $\mathtt{std}$ we now mean the 2-dimensional affine space where $\bH$ acts through its first component. The variety $\mb{X}=\bG\times^\bH\mathtt{std}$ is spherical. The trivial line bundle on $\mb{X}$ with the obvious character $\bG\to\G_m$ gives rise to the tame norm relation \cite[Proposition 8.17]{HJS}.

\subsubsection{$\mathrm{GU}(2,1)$ \cite{LSZ-U3}}
Let $E$ be an imaginary quadratic field. Consider the pair of algebraic groups defined over $\Q$:
\[
    (\bG,\bH)=(\mathrm{GU}(2,1), \GL_2\times\Res_{E/\Q}\G_m).
\]
The variety $\mb{X}=\bG\times^\bH\mathtt{std}$ is spherical by \cite[Lemma 2.5.1]{LSZ-U3}. The authors also defined a character $\mu:\bG\to\Res_{E/\Q}\G_m$ in \S 2.2 of \emph{op.~cit.} and use it to vary the class over field extensions. In our set-up, this corresponds to the trivial $\Res_{E/\Q}\G_m$-bundle over $\mb{X}$ with the action of $\mb{G}$ given by $\mu$.

An interesting additional feature is that we have a 2-variable $\Z_p$-deformation, since this is a 2-dimensional torus bundle. Let $\ell=w\bar{w}$ be a split prime, then Theorem A of \emph{op.~cit.~}includes the hypothesis that at most one of $w$ and $\bar{w}$ divides the ideal $\m$. Our result also implies the tame norm relation at $\ell$ under this hypothesis: allowing $w\bar{w}|\m$ would require a divisibility by $(\ell-1)^2$, which appears to be false in this case. On the other hand, at an inert place, the divisibility requirement is by $\ell^2-1=(\ell-1)(\ell+1)$, and this can be detected by specializing at $\ell=1$ and $\ell=-1$ separately. We hope to examine the inert case in a future paper.

\appendix
    
\section{Inverse relative Satake isomorphism for the twisted unitary Friedberg-Jacquet model}
In this appendix, we shall  study the relative spherical functions and inverse Satake transform for the twisted unitary Friedberg--Jacquet pair $(G,H)$, largely following the method of \cite{Hir10} and \cite{Off04}. Compared to the main text, we make the following changes of notations to better align with the above references.
\begin{itemize}
    \item An italic letter $G$ will denote a group variety, and $G(F)$ will denote its $F$-points.
    \item We use \emph{left} action of $G$ on the variety $X=G/H$ instead of the \emph{right} action of $G$ on $H\backslash G$ was used.
\end{itemize}
\subsection{Twisted Friedberg-Jacquet model}
Let $F$ be a $p$-adic field with {\bf odd} characteristic. Let $E/F$ be  an unramified quadratic field extension. Denote by $\overline{\cdot}$ the unique non-trivial element in $\Gal(E/F)$. Let $\CO_F$ and $\CO_E$ be the ring of integers of $F$ and $E$ respectively. Fix a purely imaginary unit $i\in E$, i.e.~$\bar{i}=-i$ such that $\CO_E=\CO_F[i]$.  Let $\varpi\in \CO_F$ be a uniformizer. Denote $\# \CO_F/\varpi$ by $q$ and normalize the norm map $|\cdot|$ on $E$ by $|\varpi|=q^{-1}$. 

Let $\left(W,\langle-,-\rangle\right)$ be a maximally split Hermitian space over $E$ of dimension $2n$. Fix a Witt decomposition $W=W_1\oplus W_2$ with $W_i$  maximal anisotropic of dimension $n$. Fix a basis $\{v_i\}$ of $W_1$ and $\{v_{-i}\}$ of $W_2$ such that $\langle v_i, v_{-j}\rangle=\delta_{ij}$. This defines a full flag  \[F_1\subset F_2\subset \cdots \subset W_1,\quad F_i=\langle v_1,\cdots, v_k\rangle.\]
Let $G$ be  the unitary group $\U(V)=\{g\in \GL(V)| (g\cdot v, g\cdot w)=(v,w)\}$ and $B\subset P$ be the  Borel subgroup and maximal parabolic subgroup stabilizing the full flag and $W_1$ respectively. Denote by $N$ the unipotent radical of $P$.

Let $\epsilon\in \GL(V)$ be the linear transformation 
$\epsilon(x+y)=x-y$ for all $x\in X$, $y\in Y$. Then $(\epsilon\cdot x, \epsilon \cdot y)=-(x,y)$ and hence $g^*= \epsilon\circ g^{-1}\circ \epsilon$
is an anti-involution on $G$.  Set  $X=\{x\in G| x^*=x\}$ 
and equip $X$ with the $G$-action
$g\cdot x:=g x g^*$.  For any $x\in\GL(V)$, let $\Phi_x$ be  its characteristic polynomial. Then $X$ is divided into disjoint $\wt{G}$-orbits $\bigsqcup_{i=0}^{2n}X_i$ where for $1\leq i\leq 2n$, $X_i=\{x\in X| \Phi_{x\epsilon}(X)=(X-1)^i(X+1)^{2n-i}\}$. Let $H\cong \Res_{E/F}\GL_{n}\subset G$ be the stabilizer of $I_{2n}\in X$. Then $H$ is 
the Levi factor of $P$.  Let $B_H\subset H$ be the subgroup of upper triangular matrices. Let $w_n$ be the anti-diagonal matrix of size $n\times n$ with anti-diagonal entities $1$. Let $ w=\begin{pmatrix} 0 & w_n \\ w_n & 0\end{pmatrix}$  and $g^H:=\bar{g}^{T}$. Then  with respect to the basis $\{v_1,\cdots, v_n, v_{-n},\cdots, v_{-1}\}$, one has 
\begin{align*}
H(F)&=\{\begin{pmatrix} a & 0 \\ 0 & w_na^{H,-1}w_n\end{pmatrix}| a\in \GL_n(E)\}\subset G(F)=\{g\in \GL_{2n}(E)| g w g^H=w\}.\\
N(F)&=\{\begin{pmatrix} I_n & b \\ 0 & I_n\end{pmatrix}| b\in\GL_n(E),\ bw_n+w_nb^H=0\}\\
P(F)&=H(F)N(F)=\{\begin{pmatrix}a & b \\ 0 & w_n a^{H,-1}w_n\end{pmatrix}|aw_nb^H=-bw_na^H,\ a\in \GL_n(E), b\in M_n(E)\}\\
T(F)&=\{\diag\{a_1,\cdots, a_n, \bar{a}_n^{-1},\cdots,\bar{a}_1^{-1}\}| a_i\in E^\times\}\subset B(F)=\{bn\in G(F)| b\in B_H(F),\ n\in N(F)\}.
\end{align*}

\subsection{Relative Cartan decomposition}
\begin{lemma}\label{F-points}One has $G\cdot I_{2n}=X_n$, and the map \[\theta:\ G\to X_n,\quad g\mapsto gg^*\] induces an isomorphism $ G/H\cong X_n$. Taking $F$-points, $ G(F)/H(F) \cong X_n(F)$.
\end{lemma}
\begin{proof}By definition, $\theta$ induces an injection $G/H\hookrightarrow X$. Note that \[X(F)=\{x\in G| x^*=x\}=\{x\in \GL_{2n}(E)| (x\epsilon)^2=I_{2n},\  w\epsilon  x \epsilon w=x^H\}\]
Let $\Gamma=\Gal(\bar{F}/F)$. Then $\Res_{E/F}\GL_{2n}(\bar{F})=\GL_{2n}(\bar{F})\times \GL_{2n}(\bar{F})$  with the   $\Gamma$-action: 
\[\sigma\cdot(g_1,g_2)=\begin{cases} (g_1^\sigma, g_2^\sigma) & \text{if}\  \sigma|_E=\mathrm{id},\\ (g_2^\sigma,g_1^\sigma) & \text{if}\ \sigma|_E=\tau.\end{cases}\]
 The involution $g\mapsto g^H$ on $G(F)$ extends to $G(\bar{F})$ by the rule $(g_1,g_2)^H=(g_2^T,g_1^T)$ and \[G(\bar{F})=\{(g_1,g_2)\in \Res_{E/F}\GL_{2n}(\bar{F})| g_2^T w g_1=w\}=\{(g, w g^{T,-1} w)| g\in \GL_{2n}(\bar{F})\}\]
 \[X(\bar{F})=\{(x, wx^{T,-1}w)| x\in \GL_{2n}(\bar{F}),\quad (x\epsilon)^2=I_{2n}\}.\]
 This forces $G\cdot I_{2n}=X_n$.  The short exact sequence of $\Gamma$-sets
\[1\to H(\bar{F})\to G(\bar{F})\xrightarrow{g\mapsto g\cdot I_{2n}} X_n(\bar{F})\to 1,\]
induce the long exact sequence of pointed sets 
\[1\to G(F)\cdot I_{2n}\to X_n(F)\to H^1(\Gamma, H)\to H^1(\Gamma, G).\]
As $\h^1(\Gamma, H)$ is trivial, we have $G(F)\cdot I_{2n}=G(F)/H(F)=X_n(F)$. 
 \end{proof}

Let $I=\{0,1\}^n$.  For $\lambda=(\lambda_1,\cdots, \lambda_n)\in I$, set \[\xi_\lambda=w\begin{pmatrix}I_n & i \varpi^\lambda w_n\\ 0 & I_n\end{pmatrix},\quad \varpi^\lambda= \diag\{\varpi^{\lambda_1},\cdots, \varpi^{\lambda_n}\}\]
 Let $\xi=\xi_{\{0,\cdots,0\}}$ and consider the $B\times H$-action on $G$ given by $(b,h)\cdot g= bg h^{-1}$. 
\begin{prop}   $B(F)\xi_\lambda H(F)$, $\lambda\in I$ are precisely all the open $(B(F)\times H(F))$-orbits in $G(F)$.  \end{prop}
\begin{proof} If there exist $g=\begin{pmatrix} b & n \\ 0 & w_n b^{H,-1} w_n\end{pmatrix}\in B(F)$ and $h=\diag\{a, w_n a^{H,-1}w_n\}\in H(F)$ such that \begin{align*}
 g=\xi_{\lambda^\prime}h\xi_\lambda^{-1}&=w \begin{pmatrix} I_n & i \varpi^{\lambda^\prime}w_n \\ 0 & I_n\end{pmatrix} \begin{pmatrix}a & 0 \\ 0 & w_n a^{H,-1}w_n\end{pmatrix} \begin{pmatrix} I_n & -i \varpi^{\lambda}w_n \\ 0 & I_n\end{pmatrix}w\\   
 &=w\begin{pmatrix} a & i (\varpi^{\lambda^\prime}a^{H,-1}w_n-a\varpi^{\lambda}w_n)\\ 0 & w_n a^{H,-1}w_n\end{pmatrix}w=\begin{pmatrix} a^{H,-1} & 0\\ i w_n(\varpi^{\lambda^\prime}a^{H,-1}-a\varpi^{\lambda}) & w_n aw_n\end{pmatrix}
\end{align*}
The equality $\varpi^{\lambda^\prime}a^{H,-1}=a\varpi^\lambda$ and the fact $a^{H,-1}$ is upper triangular forces $a$ to be diagonal and $\varpi^{\lambda^{\prime}-\lambda}=aa^H$. As $\varpi\notin N_{E/F}(E^\times)$, we must have $\lambda=\lambda^\prime$. Hence $B(F)\xi_\lambda H(F)$ are all disjoint. Actually this argument shows that $B(\bar{F})\xi_\lambda H(\bar{F})$ all coincide and are just the $\bar{F}$-points of the orbit $O=B \xi H$. One can check that $L:=\xi H\xi^{-1}\cap B\cong \U(1)^n$ and $L$ is isomorphic to the stabilizer of $\xi$. 
From the short exact sequence 
\[1\to L\to B\times H\to O\to1,\]
one deduces the long exact sequence of pointed sets 
\[1\to B(F)\xi H(F)\to O(F)\to \h^1(\Gamma, L)\to \h^1(\Gamma, B\times H)\]
Since $\h^1(\Gamma, L)\cong (\BZ/2\BZ)^n$ and $\h^1(\Gamma, B\times H)=\{e\}$, we deduce that 
\[O(F)=\sqcup_{\gamma\in \{0,1\}^n}B(F)\xi_\lambda H(F).\]

 By an easy dimension counting, one has that $O\subset G$ is open. As $G$ is irreducible, it is the unique open $B\times H$-orbit. This implies that $B(F)\xi_\lambda H(F)$ are all the open $B(F)\times H(F)$-orbit in $G(F)$.
\end{proof}
For latter use, we  describe the open $B$-orbit $O$ concretely. For $x\in X_n$, let $d_i(x)$ be the determinant of its lower left $i\times i$-block for $1\leq i\leq n$. 
Let \[X_n^\prime=\{x\in X_n| \prod_{i=1}^n d_i(x)\neq 0\}.\]
For $\lambda\in \BZ^n$, let  $\lambda^*=(-\lambda_n,\cdots,-\lambda_1)$ and  \[x_\lambda=\diag\{i\varpi^{\lambda}, -i^{-1}\varpi^{\lambda^*}\}w,\quad \varpi^\lambda=\diag\{\varpi^{\lambda_1},\cdots,\varpi^{\lambda_n}\} \]
\begin{cor} The $B$-variety $X_n^\prime$ is homogeneous. Consequently, \[X_n^\prime(F)=\theta(O)(F)=\bigsqcup_{\lambda\in I} B(F)\cdot x_\lambda.\]
\end{cor}
\begin{proof}By \cite[Lemma 5.7]{Off04}, $X_n^\prime$ is actually an open $B$-orbit (hence $\wt{B}$-orbit). Thus \[X_n^\prime(F)=\bigsqcup_{\lambda\in I} B(F)\cdot \theta(\xi_\lambda).\] Since $x_\lambda\in B(F)\cdot \theta(\xi_\lambda)$, we are done.
\end{proof}
Note $I_{2n}\in K\cdot x_0$ and for $u\in I$, \[X_u=B(F)\cdot x_u=\{x\in X_n^\prime(F)|\mathrm{val}_\varpi(d_i(x))\equiv \sum_{j=1}^iu_i \mod 2\}\]

To describe the relative Cartan decomposition for the pair $(G(F),H(F))$,  we need the generalized Cartan decomposition of the $\GL_n(E)$-module $S_n(E)=\{g\in M_n(E)| g^H=-g\}$. Note that $S_n(E)$ is a $\GL_n(E)$-module by the rule $g\cdot s=gsg^H$. Let \[\Lambda_n=\{\lambda\in\tilde{\Lambda}_n| \lambda_i\in\BZ\}\subset \tilde{\Lambda}_n=\{\lambda=(\lambda_1,\cdots,\lambda_n)| \lambda_1\geq\cdots\geq \lambda_n, \lambda_i\in\BZ\cup\{\infty\}\}\]
Here we use the convention $\infty>n$ for all $n\in\BZ$. For any $\lambda\in \tilde{\Lambda}_n$, set \[Q(\lambda)=(-\lambda_n,\cdots,-\lambda_{i+1},0,\cdots, 0)\] if $\lambda_i\geq0$ and $\lambda_{i+1}<0$. Then one has a surjection 
\[\tilde{\Lambda}_n\to \Lambda_n^+,\quad \lambda\mapsto Q(\lambda)\]
For $\lambda\in\tilde{\Lambda}_n$, set $\varpi^\lambda=\diag\{\varpi^{\lambda_1},\cdots,\varpi^{\lambda_n}\}$. Here  $\varpi^\infty=0$. 
\begin{lemma}\label{generalized Cartan}For $\lambda\in \tilde{\Lambda}_n$, set $C_\lambda=\GL_n(\CO_E)\cdot i \varpi^\lambda $. Then  $S_n(E)=\bigsqcup_{\lambda} C_\lambda$. 
\end{lemma}
\begin{proof}Let $H_n(E)=S_n(E)\cap \GL_n(E)$. Then by \cite{Jac62}, $H_n(E)=\bigsqcup_{\lambda\in \Lambda_n}C_\lambda$. For $A\notin S_n(E)-H_n(E)$, the rows $\{r_i\}$ of $A$ must satisfy an equation $\sum_{i=1}^na_ir_i=0$. Up to replacing $A$ by $sAs^H$ for suitable permutation matrix $s$, we can assume $a_1=1$ and $a_i\in\CO_E$. Take $k=\begin{pmatrix}1 & a\\ 0 & I_{n-1}\end{pmatrix}$ with $a=(a_2,\cdots,a_n)$. We have \[aAa^H=\begin{pmatrix} 0 & 0\\ 0 & A_{n-1}\end{pmatrix},\ A_{n-1}\in S_{n-1}(E)\]
Thus by induction, we have $S_n(E)=\bigcup_{\lambda} C_\lambda$.

Denote $\ell(\lambda)=\sharp\{i|\lambda_i\neq\infty\}$. Then easy to see any $k\in C_\lambda$ has rank $\ell(\lambda)$. By the Cartan decomposition above,  we can check that when  $\ell(\lambda_1)=\ell(\lambda_2)$, $C_{\lambda_1}\cap C_{\lambda_2}\neq\emptyset$ if and only if $\lambda_1=\lambda_2$. Consequently, $S_n(E)=\bigsqcup_{\lambda} C_\lambda$. 
\end{proof}

\begin{lemma}\label{Inter}For $\lambda,\mu\in \tilde{\Lambda}_n$, $Q(\mu)=Q(\lambda)$ if and only if \[\{i\varpi^\lambda-k\cdot i\varpi^u|k\in K\}\cap M_n(\CO_E)\neq \emptyset.\] 
\end{lemma}
\begin{proof} Clearly if $\lambda_n\geq0$ and $\mu_n\geq0$, then the intersection is non-zero. By the similar result for all $m\leq n$, we deduce that if $Q(\mu)=Q(\lambda)$, then 
the intersection is non-zero. For the converse direction, assume $\lambda_i\geq0$ and $\lambda_{i+1}<0$. Then one can show that if the intersection is non-empty, there exists $k\in C_{\mu}$ of the form $\begin{pmatrix} k^\prime & 0 \\ 0 & i \varpi^{\lambda_i}\end{pmatrix}$ by   applying permutation matrices and repeatedly applying the observation $\min_{a\in C_\lambda}\ell(a)=\lambda_n$ by the very definition of $C_\lambda$ and the strong inequality. Here for $a\in M_n(E)$,  $\ell(a)=\min\{\mathrm{val}_\varpi(a_{ij})|1\leq i,j\leq n\}$ ($\mathrm{val}_\varpi(0)=\infty$).  By the Cartan decomposition, we deduce that $\mu_{i}\geq0$ and $\lambda_{j}^\prime=\lambda_j$ for $j=i+1,\cdots,n$, i.e. $Q(\lambda)=Q(\mu)$. 
\end{proof}

 Consider maximal open compact subgroup $K=G(F)\cap M_{2n}(\CO_E)\subset G(F)$. Let $\Lambda_n^+=\{\lambda\in\Lambda_n|  \lambda_n\geq 0\}$. 
We have the following relative Cartan decomposition:
\begin{prop}\label{Ralative Cartan} $X_n(F)=\sqcup_{\lambda\in \Lambda_n^+}K\cdot x_\lambda$.
\end{prop}
\begin{proof}
Let \[Y_n(F)=\{y\in \GL_{2n}(E)| y^2=I_{2n},\ wyw=-y^H,\ \Phi_y(t)=(t^2-1)^n\}\] equipped with the $G(F)$-action $g\cdot y=gyg^{-1}$. Then the map \[X_n(F)\to Y_n(F),\quad x\mapsto x\epsilon\]
is a $G(F)$-equivariant bijection. Thus by Lemma \ref{F-points}, one has $G(F)\cdot \epsilon=Y_{n}(F)$. 
By the Cartan decomposition $G(F)=KP(F)$,
we have \[Y_n(F)=K\cdot \{\begin{pmatrix}I_n & b \\ 0 & -I_n\end{pmatrix}| bw_n\in  S_n(E)\}\]
Considering the action of \[\{\begin{pmatrix}a & 0\\ 0 & w_na^{H,-1}w_n\end{pmatrix}|a\in \GL_n(\CO_E)\}\subset K\cap P(F),\]
we deduce from Lemma \ref{generalized Cartan} that 
\[Y_n(F)=\bigcup_{\lambda\in \tilde{\Lambda}_n}K\cdot y_\lambda,\quad y_\lambda=\begin{pmatrix}I_n & i \varpi^\lambda w_n \\ 0 & -I_n\end{pmatrix}\]
If for $\lambda,\mu\in\Lambda_n$, $K\cdot y_\lambda=K\cdot y_\mu$. Then there exists $k=\begin{pmatrix}a & b\\ c & d\end{pmatrix}\in K$ such that 
\[\begin{pmatrix}a+i\varpi^{\lambda}w_nc & b+i\varpi^{\lambda}w_nd \\ -c & -d\end{pmatrix}=y_\lambda k=k y_{\mu}=\begin{pmatrix}a & ai\varpi^{\mu}w_n-b\\ c & ci\varpi^{\mu}w_n-d\end{pmatrix}.\]
This forces $c=0$, $a\in \GL_n(\CO_E)$ and $b\in M_n(\CO_E)$ and \[d=w_n a^{H,-1}w_n,\ aw_nb^H=-bw_na^H,\ 2bw_n a^H=i(a\varpi^\mu a^H-\varpi^\lambda )\]
By Lemma \ref{Inter}, the existence of such $k$ is equivalent to $Q(\lambda)=Q(\mu)$. Since $w\in K$, we deduce that  \[X_n(F)=\sqcup_{\lambda\in \Lambda_n^+}K\cdot z_\lambda\quad z_\lambda=
\begin{pmatrix} I_n &  0\\ i \varpi^{\lambda^*}w_n  & I_n \end{pmatrix}\in X_n(F)\]
As $x_\lambda\in K\cdot z_\lambda$, we deduce that $X_n(F)=\bigsqcup_{\lambda\in \Lambda_n^+}K\cdot x_\lambda$
\end{proof}
\subsection{Relative spherical functions}
For $x\in X_n(F)$ and $s\in\C^n$, consider the integral 
\[\omega(x;s)=\int_K|d(k\cdot x)|^{s+\eta}dk\quad |d(y)|^s=\prod_{i=1}^n|d_i(y)|^{s_i}\]
with respect to the normalized Haar measure $dk$ on $K$  with total volume $1$.  Here  \[\eta=(\eta_i)=(-1+\frac{\pi\sqrt{-1}}{\log q},\cdots,-1+\frac{\pi\sqrt{-1}}{\log q},-1/2+\frac{\pi\sqrt{-1}}{\log q})\in\C^n.\]   
By \cite[Theorem 4.1]{Off04}, this integral converges absolutely  when $\Re(s_i+\eta_i)\geq0$, $1\leq i\leq n$ and extends to a rational function of $q^{s_i}$, $i=1,...,n$. Thus by meromorphic continuation, 
$\omega(x;s)\in\CS(K\backslash X_n(F))$. 

Note that if $x\in X_n(F)$ has lower left $n\times n$-block $C$, then $\begin{pmatrix}A & N\\ 0 &  w_n A^{H,-1}w_n\end{pmatrix}\cdot x$ has lower left $n\times n$-block $ w_n A^{H,-1} w_n C A^{-1}$. Thus $d_i(p\cdot x)=\psi_i(p) d_i(x)$ for $p\in B(F)$ where \[\psi_i:\ (B\to) T\to\C^\times,\quad \mathrm{diag}\{a_1,\cdots,a_n,  \bar{a}_n^{-1},\cdots, \bar{a}_1^{-1}\}\mapsto \prod_{j=1}^iN_{E/F}(a_j)^{-1}.\] 
One can check \[|\psi(p)|^{\eta}=\prod_i|\psi_i(p)|^{\eta_i}=\delta^{1/2}(p)\]
where $\delta$  is the modulus character for the left Haar measure on $B(F)$.
Consider the Satake transform  \[\lambda_s:\ \mathcal{H}(G(F),K)\to\C(q^{s_1},\cdots, q^{s_n});\quad f\mapsto \int_{B(F)}f(p)|\psi(p)|^{-s+\eta}d_Lp=\int_{B(F)}f(p)|\psi(p)|^{-s}\delta^{1/2}(p)d_Lp\]
 Then for  $f\in\mathcal{H}(G(F),K)$,
\[f\star \omega(-;s):=\int_{G(F)}f(g)\omega(g^{-1}\cdot-)dg =\lambda_s(f)\omega(-,s)\]
Here the Haar measure $dg=dk d_Rp$ on $G(F)$ satisfies that $\mathrm{Vol}(K,dk)=1$ and $d_Lp=\delta^{-1}(p)d_R p$.

Note that the Weyl group $W=W(G,B)=N_G(T)/T=S_n\ltimes(\BZ/2\BZ)^n$ is   generated by $S_n$ and $\tau$ where $S_n$ acts on $t=(a_1,\cdots, a_n,   \bar{a}_n^{-1},\cdots,  \bar{a}_1^{-1})$ by permuting indices and $\tau$ fixes $a_i$, $1\leq i<n$ and $\tau(a_n)= \bar{a}_n^{-1}$.  The induced action on $\psi^s$ gives an action of $W$ on $s$. Introduce new variables $z_i=-\sum_{j=i}^ns_j$ for $1\leq i\leq n$. Then $\psi^z:=\psi^s=\prod_{i=1}^nN(a_i)^{z_i}$. The $W$-action on $\psi^s$ induces an action of $W$  on $z$: $S_n$ acts by permutation of indices and \[\tau(z_1,\cdots,z_n)=(z_1,\cdots,-z_n)\]

Let $\Sigma$ be the root system of type $C_n$ with positive roots $\Sigma^+=\Sigma_s^+\sqcup \Sigma_\ell^+$ where 
\[ \Sigma_s^+=\{e_i-e_j,e_i+e_j|1\leq i<j\leq n\},\quad \Sigma_\ell^+=\{2e_i|1\leq i\leq n\}\]
Let $R$ be the root system of type $BC_n$ with positive roots  $R^+=R_s^+\sqcup R_\ell^+$ where  $R_\ell^+=\{2e_i|1\leq i\leq n\}$,
\[ R_s^+=R_{s,1}^+\sqcup R_{s,2}^+\quad R_{s,1}^+=\{e_i-e_j,e_i+e_j|1\leq i<j\leq n\},\ R_{s,2}^+=\{e_i|1\leq i\leq n\}. \]
Here $e_i\in\BZ^n$ is the $i$-th unit vector. View $\Sigma, R\subset \C^n$ and consider the $W$-equivariant pairing
\[\langle-,-\rangle:\ \BZ^n\times\C^n\to\C,\quad (t,z)\mapsto\sum_it_iz_i\]
\begin{theorem}\label{FE} Let $\omega(x;z):=\omega(x;s)$ and $\Omega(x;z):=\frac{\omega(x;s)}{\omega(I_{2n};s)}$. The function $G(z)\omega(x,z)$ is holomorphic on $\C^n$ and $W$-invariant.  Here \[G(z)=\prod_{\alpha\in R_{s,1}^+}\frac{1+q^{\langle\alpha,z\rangle}}{1-q^{\langle\alpha,z\rangle-1}}\prod_{\alpha\in R_{s,2}^+}\frac{1+q^{\langle\alpha,z\rangle-1/2}}{1-q^{\langle\alpha,z\rangle-1/2}}\]
\end{theorem}
\begin{proof}First we consider the case $n=1$. Consider the similitudes unitary group \[\wt{G}=\{g\in \GL_2(E)| gwg^H=\lambda(g)w\}= J (\GL_2(F)\times_{F^\times} E^\times) J^{-1},\quad J=\begin{pmatrix} i & 0\\ 0 & 1\end{pmatrix} \]
Then $\wt{G}$ also acts on $X_1$ by the rule: $g\cdot x= gx\epsilon^{-1}g \epsilon$. Let $\wt{H}$ be the stabilizer of $I_2$. Let \[Y_1=J^{-1}X_1 J=\{\begin{pmatrix}a & b\\ c & a\end{pmatrix}\in \GL_2(F)| a^2-bc=1\}\]
and consider the $\GL_2(F)$-action on $Y_1$ given by: $g\cdot y=gy \epsilon g^{-1}\epsilon$. Then 
conjugation by $J$ induces an isomorphism \[\wt{G}(F)/\wt{H}(F)=X_1(F)\cong Y_1(F)=\GL_2(F)/T(F)\]
where $T\subset \GL_2$ is the diagonal torus.
Let $\wt{K}=\wt{G}(F)\cap \GL_2(\CO_E)= K\times\diag\{1,\CO^\times\}$. Then \[\omega(x;z)=\int_K|d(k\cdot x)|^{s+\eta}dk=\int_{\wt{K}}d(k\cdot x)|^{s+\eta}dk=\int_{\GL_2(\CO)}d(k\cdot y)|^{s+\eta}dk,\quad y=J^{-1}xJ\] 
With all these preparation, we deduce  from \cite[Prop 4.3, Lemma 5.2, Lemma 5.6, Lemma 5.13]{Off04} that 
\begin{itemize}
    \item there exists an absolute constant $C$ such that $\omega(I_2;z)=C\frac{1-q^{-1/2}q^z}{1+q^{-1/2}q^z}$,
    \item $\Omega(x;z)\in \C[q^z,q^{-z}]^W$.
\end{itemize} 
In other words, $\frac{1+q^{-1/2}q^z}{1-q^{-1/2}q^z} \omega(x;z)$ is fixed by $W$.

Now  we can adapt the induction argument in \cite[Thm 2.3]{HK14} to show $\frac{1+q^{-1/2}q^{z_n}}{1-q^{-1/2}q^{z_n}} \omega(x,z)$ is $\tau$-invariant when $n\geq2$. Consider the element \[w_\tau=\begin{pmatrix} I_{n-1} & 0 & 0
\\ 0 & w_2 & 0\\ 0 & 0 & I_{n-1}\end{pmatrix}\]
and the attached parabolic subgroup $P_\tau=B\cup B w_\tau B$. For any $p=(p_{i,j})\in P_\tau$, let \[\rho(p)=\begin{pmatrix} p_{n,n} & p_{n,n+1}\\ p_{n+1,n} & p_{n+1,n+1}\end{pmatrix}\in \U(2)=\{g\in \GL_2(E)| gg^H=I_2\}\]
Consider the $P_\tau\times E^\times$ action on $X_n(F)\times E^2$ (the $E^2$-component is realized as column vectors):
\[(p,r)\ast (x,v)=(p\cdot x, \rho(p) v r^{-1})\]
For $x\in X_n(F)$,$v=[v_1, v_2]^T\in E^2$, let  \[g(x,v)=\det \left[\begin{pmatrix} -v_2 & v_1 & 0\\ 0 & 0 & I_{n-1}\end{pmatrix}\cdot x_{n+1}\right]\]
Here $x_{n+1}$ is the lower right $(n+1)\times (n+1)$-block of $x\epsilon w$ and $p\cdot x=pxp^H$. Then one can check that 
$g(x,v_0)=(-1)^{\lfloor \frac{n}{2}\rfloor}d_n(x)$ for $v_0=[1\ 0]^T$ and by \cite[Lemma 2.5(i)]{HK14}, \[g((p,r)\ast (x,v))=\psi_{n-1}(p)N(r)^{-1}g(x,v)\quad \forall\ (p,r)\in P_\tau\times E^\times\]
Moreover similarly to \cite[Lemma 2.5(ii)]{HK14}, there exists $D_x\in S_2(E)$ such that $g(x,v)=v^HD_x v$ for all $v\in E^2$. One has
$(-1)^{\lfloor \frac{n}{2}\rfloor}d_{n-1}(x)^{-1}D_x w_2\epsilon_2\in X_1(F)$ for $\epsilon_2=\diag\{1,-1\}$.  Then by the argument of \cite[Theorem 2.3]{HK14}, we have $\frac{1+q^{1/2}q^{z_n}}{1-q^{-1/2}q^{z_n}} \omega(x,z)$ fixed by $\tau$.

Now we adapt the argument in \cite[Thm 2.1]{HK14} to show $G_1(z)\omega(x,z)$ is $S_n$-invariant, where
 \[G_1(z)=\prod_{1\leq i<j\leq n}\frac{1+q^{z_i-z_j}}{1-q^{z_i-z_j-1}}.\]
 Consider the embedding \[K_0=\GL_n(\CO_F)\to K,\quad h\mapsto \begin{pmatrix} w_n h^{H,-1}w_n & 0\\ 0 & h\end{pmatrix}\]
and the normalized Haar measure $dh$ on $K_0$. 
One has \[\omega(x;z)=\int_{K_0} \int_K |d(hk\cdot x)|^{s+\eta}dkdh=\int_K \xi_*^h(L(k\cdot x)w_n;z)dk\] 
Here $L(k\cdot x)$ stands for lower left $n\times n$ block of $k\cdot x$ and $\xi_*^h$ is a spherical function on $S_n(E)$ defined by 
\[\xi_*^h(y;z)=\int_{K_0}|\tilde{d}(h\cdot y)|^{s+\eta}dh,\quad h\cdot y=hyh^H.\]
Here $|\tilde{d}(x)|^s=\prod_i|\tilde{d}_i(x)|^{s_i}$ where $\tilde{d}_i$ is the determinant of the lower right $i\times i$-block. Then the desired invariance follows from \cite[Proposition 2.2]{HK14}.

With all these results and the fact $W$ is generated by $S_n$ and $\tau$, we can simply check that $G(z)\omega(x,z)$ is  $W$-invariant. Then proceeding as \cite[Thm 2.6]{HK14}, we have  $G(z)\omega(x,z)$ is holomorphic.

\end{proof}
To determine $\omega(x,z)$, we follow Casselman--Shalika--Hironaka. Consider the Iwahori subgroup \[U=\{\nu=(u_{ij})\in K| u_{ii}\in\CO_E^\times\ \forall\ 1\leq i\leq 2n, u_{ij}\in\varpi\CO_E\ \text{if}\
 i>j\}\]
From \cite[Lemma 5.8]{Off04} (applied to $\GL_{2n}(E)$), we have
\begin{lemma}\label{Iwahori} For any $\lambda\in\Lambda_n^+$ and $x\in U$
\[|d_i(x\cdot x_\lambda)|=|d_i(x_\lambda)|=q^{\lambda_1+\cdots+\lambda_i}  \]
\end{lemma}
For $u\in I\cong (\BZ/2\BZ)^n$, let $ c_\lambda=(-1)^{\sum_i \lambda_i(n-i+1)}q^{-\sum_i \lambda_i(n-i+1/2)}$ and
\[\delta_u(x_\lambda;z)=\begin{cases} |d(x_\lambda)|^{s+\eta}=c_\lambda q^{-\langle \lambda,z\rangle} & \text{if}\ x_\lambda\in X_u;\\
0 & \text{otherwise}.\end{cases}\]
Consider the refined spherical function
\[\omega_u(x;z):=\omega_u(x;s)=\int_K|d(k\cdot x)|_u^{s+\eta}dk,\quad  |d(y)|_u^s=\begin{cases} \prod_{i=1}^n|d_i(y)|^{s_i}\ &\text{if}\ y\in X_u;\\
0 & \text{otherwise}\end{cases}\]
Then for each character $\chi:\ I\to\C^\times$, we have 
\[\sum_{u\in I}\chi(u)\omega_u(x;z)=\omega(x;z_\chi)\]
where $z_\chi$ is obtained by adding $\frac{\pi\sqrt{-1}}{\log q}$ to $z_i$ for suitable $i$ (according to $\chi$). Then by Theorem \ref{FE}, one can take a suitable character $\sigma(\chi)$ for each $\sigma\in W$ such that
\[\omega(x;z_\chi)=\Gamma_\sigma(z_\chi)\omega(x;\sigma(z_\chi))=\Gamma_\sigma(z_\chi)\omega(x;\sigma(z)_{\sigma(\chi)})\]
where $\Gamma_\sigma=G(\sigma(z))/G(z)$. This gives vector-wise functional equations
\[(\omega_u(x;z))_{u\in I}=A^{-1}\cdot G(\sigma,z)\cdot \sigma{A}\cdot (\omega_u(x;\sigma(z))_{u\in I} \quad \sigma\in W\]
 where $A=(\chi(u))_{\chi, u}$, $\sigma{A}=(\sigma(\chi)(u))_{\chi,u}$
 and $G(\sigma,z)$ is a diagonal matrix of size $2^n\times 2^n$
whose $(\chi,\chi)$-component is $\Gamma_\sigma(z_\chi)$.  Note that 
\[Q=\sum_{\sigma\in W}[U\sigma U: U]^{-1}=\frac{\omega_{2n}(-q^{-1})}{(1-q^{-2})^n},\quad \omega_m(t)=\prod_{i=1}^m(1-t^i)\]
Let $\gamma(z)=\prod_{\alpha\in\Sigma_s^+}\frac{1-q^{2\langle\alpha,z\rangle-2}}{1-q^{2\langle\alpha,z\rangle}}\prod_{\alpha\in\Sigma_\ell^+}\frac{1-q^{\langle\alpha,z\rangle-1}}{1-q^{\langle\alpha,z\rangle}}$ and \[c(z)=\gamma(z)G(z)=\prod_{\alpha\in R_{s,1}^+}\frac{1+q^{\langle\alpha,z\rangle-1}}{1-q^{\langle\alpha,z\rangle}}\prod_{\alpha\in R_{s,2}^+}\frac{1+q^{\langle \alpha,z\rangle-1/2}}{1-q^{\langle\alpha,z\rangle-1/2}} \prod_{\alpha\in R_\ell^+}\frac{1-q^{\langle\alpha,z\rangle-1}}{1-q^{\langle\alpha,z\rangle}}\]

\begin{theorem}\label{FE1} For any $\lambda\in\Lambda_n^+$, \begin{align*}
    \omega(x_\lambda,z)&=\frac{c_\lambda(1-q^{-2})^n}{\omega_{2n}(-q^{-1})}\cdot\sum_{\sigma\in W}\gamma(\sigma(z))\Gamma_\sigma(z)q^{-\langle \lambda,\sigma(z)\rangle}\\
    &=\frac{c_\lambda(1-q^{-2})^n}{\omega_{2n}(-q^{-1})}G(z)^{-1}Q_\lambda(z),\quad Q_\lambda(z):=\sum_{\sigma\in W}\sigma(q^{-\langle\lambda,z\rangle}c(z)) 
\end{align*}
\end{theorem}
\begin{proof}This follows from \cite[Thm 2.6]{Hir10}. One can adapt  \cite[Section 3.2]{HK14} to show our $(G, X)$ satisfies the assumption A1-A4 in \emph{loc.~ cit.} (apply the  induction argument there to $X_n(F)\epsilon w$).
\end{proof}

 Assigning the parameters $\{t_\alpha| \alpha\in R\}$ take the following values:
\begin{itemize}
\item if $\alpha \in \Sigma$ is long, then $t_{\alpha/2}=1$ and $t_{\alpha}^{1/2}=q^{-1/2}$;
\item if $\alpha \in \Sigma$ is short, then $t_\alpha=-q^{-1}$;
\item if $\alpha\notin R$, then $t_\alpha^{1/2}=1$.
\end{itemize}
Let \[ P_z(\lambda):=P_\lambda(q^z)=V_\lambda^{-1}\sum_{\sigma\in W}\sigma(q^{-\langle \lambda,z\rangle}c(z))\quad c(z)=\prod_{\alpha\in R^+}\frac{1-t_\alpha t_{2\alpha}^{1/2} q^{\langle\alpha,z\rangle}}{1- t_{2\alpha}^{1/2} q^{\langle\alpha,z\rangle}}\] be the Macdonald polynomial associated to the pair $(R,\frac{\Sigma}{2})$. Then $\Omega(x_\lambda;z)=c_\lambda V_\lambda V_0^{-1} P_z(\lambda)$. Let \[\Delta(z)=\prod_{\alpha\in R}\frac{1-t_{2\alpha}^{1/2}q^{\langle \alpha,z\rangle}}{1-t_\alpha t_{2\alpha}^{1/2}q^{\langle \alpha,z\rangle}}\]
Let $D$ be the direct product of $n$ copies of $\sqrt{-1}(\R/\frac{2\pi}{\log q}\BZ)$ and $dz$ be the Lebesgue measure on $D$ of totally volume one. Let $d_\mu z=\frac{1}{\# W}V_0\Delta(z)dz$. 
\begin{theorem}\label{RSUFJ}The relative Satake transform 
\[ \CS(K\backslash X_n(F))\to \C[q^{\pm z_1},\cdots, q^{\pm z_n}]^W,\quad \varphi\mapsto \hat{\varphi}(z):=\int_{X_n(F)}\varphi(x)\Omega(x;z)dx\]
is an $\sh{H}(G(F),K)$-module isomorphism. Moreover, one has the relative inverse Satake transform: 
\[\varphi(x)=\int_D \hat{\varphi}(z)\Omega(x;z)d_\mu(z)\quad \forall x\in X_n(F)\]
In particular, $\CS(K\backslash X_n(F))$ is free of rank $2^n$ over $\mathcal{H}(G(F),K)$.
\end{theorem}

\bibliographystyle{alpha}
\bibliography{Ref}

\end{document}